\definecolor{darkgreen}{rgb}{0.5,0.25,0}
\definecolor{darkblue}{rgb}{0,0,1}
\definecolor{answerblue}{rgb}{0,0,0.75}
\newcommand*{\mailto}[1]{\href{mailto:#1}{\nolinkurl{#1}}}
\newcommand{\ep}{\varepsilon}
\newcommand{\eps}{\varepsilon}
\DeclareMathOperator*{\Ex}{\mathbb{E}}
\renewcommand{\d}{\mathrm{d}}
\newcommand{\average}{{\square}}
\newcommand{\loc}{\mathrm{loc}}
\newcommand{\cell}{{\mathcal{C}}}
\DeclareMathOperator{\diag}{diag}
\DeclareMathOperator{\Div}{div}
\newcommand{\R}{\mathbb{R}}
\newcommand{\T}{{\mathbb{T}}}
\newcommand{\N}{\mathbb{N}}
\newcommand{\Z}{\mathbb{Z}}
\newcommand{\U}{\mathfrak{U}}
\newcommand{\bk}[1]{ \left(  #1 \right)}
\newcommand{\abs}[1]{ \left|  #1 \right|}
\newcommand{\norm}[1]{ \left\|  #1 \right\|}
\newcommand{\one}[1]{\mathds{1}_{#1}}
\newcommand{\Dt}{{\Delta t}}
\newcommand{\Dx}{{\Delta x}}
\newcommand{\hf}{{\nicefrac12}}
\newcommand{\thf}{{\nicefrac32}}
\renewcommand{\le}{\leqslant}
\renewcommand{\leq}{\leqslant}
\renewcommand{\ge}{\geqslant}
\renewcommand{\geq}{\geqslant}
\renewcommand{\phi}{\varphi}
\newcommand{\from}{\colon}
\newcommand{\nqquad}{\hspace{-2em}}
\newcommand{\dconv}{\mathbin{\bm{*}}} 
\newcommand{\relspace}{\hphantom{{}={}}}
\newtheorem{theorem}{Theorem}[section]
\newtheorem{proposition}[theorem]{Proposition}
\newtheorem{lemma}[theorem]{Lemma}
\theoremstyle{definition}
\newtheorem{definition}[theorem]{Definition}
\newtheorem{remark}[theorem]{Remark}
\newtheorem{assumption}[theorem]{Assumption}
\numberwithin{equation}{section}
\title[Numerical scheme for stochastic transport equation]{Convergent finite difference schemes for stochastic transport equations}
\author[Fjordholm]{U. S. Fjordholm}
\address[Ulrik S. Fjordholm]{Department of Mathematics\\
   University of Oslo\\
  NO-0316 Oslo\\ Norway}
\email{\mailto{ulriksf@math.uio.no}}
\author[Karlsen]{K. H. Karlsen}
\address[Kenneth H. Karlsen]{Department of Mathematics\\
   University of Oslo\\
  NO-0316 Oslo\\ Norway}
\email{\mailto{kennethk@math.uio.no}}
\author[Pang]{P.H.C. Pang}
\address[Peter H.C. Pang]{Department of Mathematics\\
   University of Oslo\\
  NO-0316 Oslo\\ Norway}
\email{\mailto{ptr@math.uio.no}}
\subjclass[2020]{{Primary:}
60H15,   
65M12;   
{Secondary:}
60H50,   
65M80.   
}
\keywords{Stochastic transport equation, gradient noise,
finite difference scheme, regularisation by noise, stability, convergence}
\thanks{We gratefully acknowledge the support of the Research
Council of Norway through the project \textit{INICE} (301538). We thank Welid Alebady for many helpful comments on the manuscript.}
\date{\today}
\begin{document}
\begin{abstract}
We present difference schemes for stochastic transport equations with low-regularity velocity fields. We establish $L^2$ stability and convergence of the difference approximations under conditions that are less strict than those required for deterministic transport equations. The $L^2$ estimate, crucial for the analysis, is obtained through a discrete duality argument and a comprehensive examination of a class of backward parabolic difference schemes.
\end{abstract}

\maketitle

\setcounter{tocdepth}{2}
\tableofcontents

\section{Introduction}\label{sec:intro}
In this paper we design and analyse finite difference schemes
for transport equations with gradient noise.
We allow for data of lower regularity than what is
conventionally assumed for deterministic equations.
Our approach builds upon and extends the
well-posedness theory of weak solutions
which was initially introduced and extensively studied
in prior works such as \cite{Attanasio:2011fj,Beck:2019aa}.

For some $T>0$ and $d\in\N$, we consider
the stochastic transport equation
\begin{equation}\label{eq:stratonovich_eq}
\begin{aligned}
	&\d u +  V \cdot \nabla u \,\d t
	+ \sigma  \nabla u \circ \d W = 0,
	&& (\omega, t,x)
	\in \Omega \times [0,T] \times \R^d, \\
	&u(0,x) = u^0(x), && x\in\R^d,
\end{aligned}
\end{equation}
where the velocity field $V\from\R^d\to\R^d$,
the noise coefficient $\sigma\from\R^d\to\R_+$,
and the initial data $u^0\from\R^d\to\R$
are all given, and 
\[
\sigma\nabla u\circ dW = \sum_{i=1}^d \sigma(x) \frac{\partial u}{\partial x^j}(x,t) \circ dW^j(t),
\]
where $W^1,\dots,W^d$ are independent Brownian motions in $\R^d$ and ``$\circ\ dW^j$'' signifies
temporal white noise in Stratonovich form.
Here and in the remainder, we make the
following assumptions.

\begin{assumption}\label{ass:main_assumption}
The noise coefficient $\sigma$ lies in the
Sobolev space $W^{3,\infty}(\R^d)$ and
satisfies $\sigma(x)\geq\sigma_0$ for all
$x\in\R^d$ for some number $\sigma_0>0$. The
velocity field $V$ lies in $L^2\cap L^\infty(\R^d)$
with $\partial_{x^1}V^1,\ldots,\partial_{x^d}V^d \in L^p(\R^d)$
for some $p > d$ for $d \ge 2$ and $p \ge 2$ for $d = 1$. The initial data $u^0$
lies in $L^2(\R^d)$. 
\end{assumption}

Note that the velocity field is allowed to be rather irregular:
We do \emph{not} require that $\Div V\in L^\infty$, as is typically
done in deterministic transport theory\footnote{For 
technical reasons (in particular, Lemma \ref{lem:V_DV_discretelimits1}(ii)), 
we were unable to relax our condition $\partial_{x^1}V^1,	
	\dots,\partial_{x^d}V^d \in L^p(\R^d)$ to the more natural 
$\Div V = \partial_{x^1}V^1+\dots+\partial_{x^d}V^d \in L^p(\R^d)$.}. 
The objective of this
paper is to establish the convergence of
carefully constructed finite difference schemes for
approximating weak $L^2$ solutions of \eqref{eq:stratonovich_eq}.

Our approach to analysing our difference schemes is
rooted in the classical Lax--Richtmeyer principle
of proving \textit{consistency} and \textit{stability}.
While consistency with \eqref{eq:stratonovich_eq}
will be rather straightforward, the crux of
our challenge resides in fulfilling the stability
criterion---here construed as the
ability to bound the $L^2$ norm of the
difference approximations, independently
of the grid size.  This challenge is linked to
the irregular velocity $V(x)$ as well as the
gradient noise $\sigma(x)\nabla u \circ \d W$
of \eqref{eq:stratonovich_eq}. The inherent
irregularity of $V$ (in particular, $\Div V\notin L^\infty$)
precludes the customary proof of discrete
$L^2$ estimates via the Gronwall inequality.
We will explore  the specific difference scheme
that we propose and examine how we effectively
tackle the challenges later in this introduction.

\subsection{Overview of related works}
Deterministic transport equations ($\sigma \equiv 0$)
play a significant role in modelling the evolution
of quantities being transported by a given
velocity field $V$. These equations find
widespread applications in various areas of
fluid mechanics. In addition to the
transport equations themselves, the closely
related continuity equations $\partial_t u + \Div(Vu) = 0$
and their non-linear counterparts, known as
(hyperbolic) conservation laws, represented
by $\partial_t u + \Div(Vf(u)) = 0$ (where $f$
is a non-linear function), have been intensively
studied from both mathematical
and numerical perspectives, resulting in a wealth
of knowledge and techniques.
When considering Lipschitz velocities
$V$, the books \cite{Eymard:2000fr,
	Holden:2015aa,Kroner:1997lq,Perthame:2002qy}
offer comprehensive introductions to the mathematical
and numerical theory, providing stability and
convergence results, as well as $L^1$ error estimates
for various classes of numerical schemes. For some
works specifically dedicated to linear equations
and finite difference/volume schemes, see
\cite{Delarue:2011aa,Despres:2004aa,Merlet:2007aa,Vila:2003aa}.

For deterministic linear equations with
non-smooth velocity $V$, characterised
by $V\in W^{1,p}$ (for some $p\ge 1$)
and $\Div V\in L^\infty$, various studies
have investigated the convergence properties
of discontinuous Galerkin, finite difference,
and finite volume schemes. Interested
readers are referred to \cite{Boyer:2012ab,
	Schlichting:2017aa,Schlichting:2018aa,Walkington:2005jl}
for more details on these works (this list
is by no means exhaustive). In addition,
\cite{Ben-Belgacem:2019aa} provides
insights into difference approximations of
non-linear conservation laws involving such
velocity fields. Several of these investigations
leverage the (DiPerna--Lions) renormalisation
property of weak solutions to establish
stability and convergence. Others rely on
stability estimates measured through
Kantorovich--Rubinstein type distances.
However, a shared requirement in all of these
studies is that (the negative part of) $\Div V$
belongs to $L^\infty$. Moreover, there exists
a wealth of related literature dedicated to schemes
designed for handling non-smooth velocities
originating from separate equations within the
framework of Navier--Stokes type systems.
To provide some illustrative examples, we
refer to \cite{Karper:2013aa,Liu:2007fe}.

For stochastic transport, the total velocity
field is subject to randomness and takes
the form $V + \sigma \dot{W}$, where
$\dot{W}$ represents temporal white noise,
introducing uncertainty into the gradient
part of \eqref{eq:stratonovich_eq}. We are
not aware of any rigorous convergence
analyses pertaining to numerical approximations
of \eqref{eq:stratonovich_eq}
by finite differences, irrespective
of the smoothness of $V$. We
mention the paper \cite{Hoel:2018aa},
which explores the somewhat related equation
$\partial_t u + \partial_x f(u)\circ \dot{W}=0$.
Notably, the analysis of this particular equation
includes a non-linear flux component, but excludes
a velocity field. Besides, the examination is
carried out pathwise, eliminating the need
for stochastic calculus. We also point
out that using the connection of transport noise
to multiplicative perturbation of characteristics,
a scheme based on SDE results with very rough $V$
was studied in \cite[Section 8]{LL2022}, provided
that the initial datum is smooth. Fully discrete
schemes based on Galerkin approximations for
equations with transport noise have also been
proposed, such as in \cite{CCDPS2019} and references
there cited, where they were numerically investigated.

Despite the scarcity
of research on numerical schemes for stochastic
transport equations, a significant body of
work is available on different classes of SPDEs.
These SPDEs, often of parabolic nature, are
driven by a lower-order stochastic forcing
term $\Phi \, dW$. For an overview and
additional resources, we refer to \cite{Banas:2014aa,
	Jentzen:2011aa,Lord:2014aa,Ondrejat:2022aa}.
Historically, numerical analysis of these
SPDEs focused on finite element methods
and their error estimates. Our research is
more aligned with recent studies that
explore finite volume schemes for non-linear
conservation laws under the influence of
stochastic forcing. These studies establish
convergence using compactness arguments.
There is a growing body of work in this field,
and while it would be impossible to mention
all of them, we highlight a few examples
\cite{Bauzet:2016ab,Bauzet:2016aa,Dotti:2020aa,Funaki:2018aa,
Kroker:2012fk,Majee:2018aa}. We also note that
the $L^2$ stability of discontinuous Galerkin
methods for such SPDEs was confirmed in 
\cite{Li:2020aa}.

\subsection{Outline of techniques}\label{sec:tech_outline}
Let us now give a more detailed exposition of
the primary contributions of this paper.
To facilitate an uncomplicated introduction,
we assume $d=1$ and consider a semi-discrete
difference approximation $u_i(t)\approx u(t,i\Delta x)$.
Here, $i\in \Z$ and $\Delta x>0$ represents the grid size.
We use $u_{\Delta x}$ to symbolise the conventional
piecewise constant interpolation (in $x$) of
the values $(u_i)_{i\in \Z}$. The deterministic
portion of \eqref{eq:stratonovich_eq} will be
discretised using a standard upwind scheme.
This process approximates $V \cdot \nabla u$ as
$V_i^+ \cdot \nabla_- u_i-V_i^-\cdot \nabla_+ u_i$,
where $\nabla_\pm$ signify the common
forward/backward difference operators,
and where $V_i^\pm = \max(\pm V_i,0)$.
For more details and further notations, see Section
\ref{sec:nummethod}. Deterministic upwind
schemes are recognised for their $L^2$ stability,
at least if $\Div V\in L^\infty$. Furthermore,
``upwinding'' leads to weak $H^1$ estimates:
$\iint \abs{\partial_x u_\Dx}^2\, \d x \, \d t
	\lesssim 1/\Dx$,
which are frequently used in the study of upwind
schemes for transport equations and conservation
laws (see e.g.~\cite{Eymard:2000fr}). However, we
emphasise that our analysis does not make use of
weak $H^1$ estimates.

Let us now shift our attention to the discretisation of the
noise component of \eqref{eq:stratonovich_eq}.
First, in order to utilise the martingale property
of stochastic integrals, we will
convert $\sigma \nabla u \circ \d W$ into the It\^o form:
\begin{equation}\label{eq:hyp-op-intro}
	\sigma \nabla u\cdot \d W- \frac12 \sigma \nabla
	\cdot\bk{\sigma \nabla u} \,\d t.
\end{equation}
This transformation elucidates the need for precise
discretisation of the variable-coefficient
gradient term $\sigma\nabla u$ and
the variable-coefficient second order parabolic operator
$\frac12 \sigma\nabla\cdot\bk{\sigma\nabla u}$.
Importantly, these discretisations must be interrelated in
a way that respects the hyperbolic character
of \eqref{eq:stratonovich_eq}. Indeed, applying the
It\^{o} chain rule to $\bigl(u_i(t)\bigr)^2$ unveils a term
$\mathcal{E}^\sigma_i$ (see \eqref{eq:scheme2-intro} below,
and precisely defined following \eqref{eq:ito_eq2})
triggered by the martingale
component of the difference scheme, alongside
dissipation linked to the chosen discretisation of
the second-order (Stratonovich--It\^{o}) parabolic
term. Control over this particular term ($\mathcal{E}^\sigma_i$),
which very much depends on the chosen
discretisations, is a \textit{first} crucial step
towards ensuring the $L^2$ stability of the scheme.

Let us introduce the notation $\overline{\sigma}_i
=\frac{\sigma_i+\sigma_{i+1}}{2}$ and define
$H_i(\overline{\sigma})=\frac{2}{\frac{1}{\overline{\sigma}_{i-1}}
+\frac{1}{\overline{\sigma}_i}}$, the harmonic mean of
$\overline{\sigma}_{i-1}$ and $\overline{\sigma}_i$.
We we will use the following
discretisation of \eqref{eq:hyp-op-intro}:
\begin{equation*}
	\sqrt{\sigma_i H_i(\overline{\sigma})}\nabla_0u_i \, \d W
	-\frac12 \sigma_i \nabla_+
	\bigl(\overline{\sigma}_{i-1}\nabla_-u_i\bigr)\,\d t,
\end{equation*}
where $\nabla_0$ is the standard central difference operator.
Although this discretisation may seem complex, it possesses a
crucial property of preserving the hyperbolic
character of \eqref{eq:stratonovich_eq}, in the sense that
\begin{equation}\label{eq:scheme2-intro}
	\begin{split}
		\mathcal{E}^\sigma_i
		&:= u_i\sigma_i
		\nabla_+\bigl(\overline{\sigma}_{i-1}\nabla_- u_i\bigr)
		- \sigma_i\nabla_+\!
		\left(\overline{\sigma}_{i-1}\nabla_-
		\!\left(\frac{u_i^2}{2}\right)\right)
		+ \sigma_i H(\overline{\sigma}_i)|\nabla_0u_i|^2\leq 0.
	\end{split}
\end{equation}
If $\sigma\equiv 1$, the
property $\mathcal{E}^\sigma_i \leq 0$ is
dependent upon the discrete commutator formula
$\beta'(u_i)\nabla_-\nabla_+ u_i
-\nabla_-\nabla_+ \beta(u_\alpha)
=-\abs{\nabla_+ u_i}^2-\abs{\nabla_- u_i}^2$
for $\beta(u)=u^2$.
This formula mimics
the continuous identity $\beta'(u)\Delta u-\Delta \beta(u)
=-\beta''(u) |\nabla u|^2$. To summarise:
our specific discretisation of the noise
term implies that the error term
$\mathcal{E}^\sigma_i$ that appears
in the estimate on $\bigl(u_i(t)\bigr)^2$, can be ignored.

At this juncture, we shall leverage a
``regularisation by noise'' idea introduced
in \cite{Attanasio:2011fj} (see also \cite{Beck:2019aa}).
Broadly speaking, regularisation by noise refers
to the observation that introducing noise into a
deterministic system may yield advantageous
effects. These effects may include enhancing
the well-posedness of the system, preventing
solution blow-up, and improving the regularity
of the solution. The study of regularisation by
noise in SDEs and SPDEs has become a well-established
field with a vast amount of literature. For an
introductory understanding of this evolving
area of research, we recommend the book
\cite{Flandoli:2011vn} and the review paper
\cite{Gess:2018ac}.

Building upon the insightful work \cite{Attanasio:2011fj}, we will
develop a Holmgren-type duality approach
to obtaining an energy estimate (a uniform
bound on the $L^2$ norm) for the numerical
approximations. To motivate the analysis
in the upcoming sections, we briefly outline
the continuous duality approach in $\R^d$, which
eventually leads to the conclusion that
$u$ is bounded in $L^2_{\omega,t,x}$.
Multiplying \eqref{eq:stratonovich_eq} by $u$ and
taking the expectation on both sides yields
\begin{align*}
	\partial_t E = - V\cdot \nabla E
	+ \frac12  \sigma \nabla \cdot \bk{\sigma  \nabla E}
\end{align*}
where we have defined the mean energy $E\coloneqq \Ex\bigl(u^2\bigr)$.
Multiplying by a test function $\phi\in C_c^\infty\bigl([0,T]\times\R^d\bigr)$
and integrating over $(t,x)$ yields
\begin{align*}
	\int_0^T \int_{\R^d} \varphi \partial_t E \,\d x \,\d t
	= - \int_0^T \varphi V \cdot \nabla E\,\d x \,\d t
	+ \frac12 \int_0^T \int_{\R^d} \varphi \sigma \nabla
	\cdot \bk{\sigma \nabla E}\,\d x \,\d t.
\end{align*}
By performing integration by parts and conducting
some elementary manipulations, we obtain
\begin{equation}\label{eq:energycomputation}
\begin{split}
	\int_{\R^d} {\varphi(T,x)} E(T,x) \,\d x
	&=  \int_{\R^d} {\varphi(0,x)} E(0,x) \,\d x
	+\int_0^T\int_{\R^d} V \cdot \nabla \varphi \, E\,\d x \,\d t \\
	& \relspace
	+ \int_0^T \int_{\R^d} \biggl(
	\partial_t\varphi + \Div V\,\varphi
	+ \frac12 \sigma^2 \Delta \varphi \bigg) E\,\d x \,\d t\\
	&  \relspace
	+ \int_0^T \int_{\R^d} \biggl( \frac14 \Delta \sigma^2 \varphi
	+ \frac34 \nabla\sigma^2 \cdot
	\nabla \varphi \bigg) E\,\d x \,\d t.
\end{split}
\end{equation}
Let now $\phi$ solve the ``dual'' problem
\begin{align}\label{eq:dual_motivation}
\begin{cases}\displaystyle
	\partial_t\varphi + \frac12 \sigma^2\Delta \varphi
	+ \Div V\,\varphi= 0
	& \text{in } (0,T)\times\R^d\\
	\phi(T)=\psi
\end{cases}
\end{align}
for an arbitrary $\psi\in L^\infty(\R^d)$.
From \eqref{eq:energycomputation} we then get
\begin{equation}\label{eq:energygronwall_cont}
\begin{aligned}
	\int_{\R^d} \psi(x) E(T,x) \,\d x
	&\lesssim \|\phi(0)\|_{L^\infty_x}\int_{\R^d}E(0)\,\d x \\
	&\relspace + \norm{\Delta \sigma^2}_{L^\infty_x}
	\norm{\varphi}_{L^\infty_{t,x}}
	 \int_0^T\int_{\R^d} E(t,x)\,\d x\,\d t \\
	&\relspace + \bigl\|V + \nabla \sigma^2\bigr\|_{L^\infty_x}
	\int_0^T  \|\nabla\phi\|_{L^\infty_{x}} \int_{\R^d}E(t,x)\,\d x\,\d t.
\end{aligned}
\end{equation}
Taking the supremum over $\psi$ with
$\|\psi\|_{L^\infty}\leq 1$ and applying
Gronwall's inequality now yields the
sought-after $L^2$ bound on $u$, \emph{if}
we can bound $\phi$ and $\nabla\phi$ in $L^\infty$,
uniformly in $\psi$. By a careful analysis
of the fundamental solution of the above
dual problem, this is indeed possible as
long as $\Div V\in L^p(\R^d)$ for
some $p>d$ (cf.~Assumption~\ref{ass:main_assumption}).
These estimates will be derived
within the discrete context in Section
\ref{sec:greens_functions}, while the
discrete analogue of the motivated duality
argument will be presented in Section \ref{sec:duality}.

The duality approach will allow us to obtain an
$L^2$-estimate for our scheme that is
robust in the face of the assumption $Z\coloneqq\Div V\in L^p$
with $p<\infty$. Returning to the one-dimensional
case $d=1$, and given two real-valued grid functions
$c_i\approx c(x_i)$
and $Z_i\approx Z(x_i)$ (for ${i\in \Z}$),
consider the semi-discrete equation
\begin{equation}\label{eq:dual_discrete-intro}
	\frac{\d}{\d t}  \varphi_i + \frac12
	c_i \nabla_+\nabla_-  \varphi_i
	= Z_i \varphi_i, \qquad  \varphi_i(T) = 1
\end{equation}
for $t<T$ and $i\in \Z$.
This is a difference scheme for
the backward parabolic equation $\partial_t \varphi
+\frac12c(x) \Delta \varphi=Z(x)\varphi$, with
a variable diffusion coefficient $c(x)>0$
and lower order forcing term $Z\varphi$.
Importantly, it is well-established that parabolic
equations of this nature exhibit $L^\infty$ gradient
bounds in terms of the $L^p$ norm of $Z$, as long
as the exponent $p$ exceeds the spatial dimension
$d$. In simpler terms, the forcing term $Z$ does not
have to belong to $L^\infty$ for the gradient of the
solution $\varphi$ to be in $L^\infty$. In fact, more
general (Weigant--Kazhikov type) estimates are
available for a larger class of linear parabolic
equations \cite[Appendix F]{Lions:1998ga}.

A  notable portion of this paper, specifically
Section \ref{sec:greens_functions}, is
dedicated to an in-depth analysis of (the
multi-dimensional version of) the discrete
dual problem \eqref{eq:dual_discrete-intro}.
Our main focus is on the development of a
\textit{discrete parametrix} approach for
solving semi-discrete heat equations with
variable coefficients. This material holds value in
its own right and could potentially be applied in other contexts.
This analysis will culminate in a solution formula
for \eqref{eq:dual_discrete-intro}, empowering
us to infer the following critical estimate:
\begin{equation}\label{eq:dual_discrete2-intro}
	\sup_{(t,i)}
	\abs{\frac{\varphi_{i+1}(t)-\varphi_i(t)}{\Dx}}
	\lesssim 1+\mathopen\|\Div V\mathclose\|_{L^p},
\end{equation}
provided $p>d$. Employing the
duality argument mentioned before in conjunction with the structural
property \eqref{eq:scheme2-intro} and the
bound \eqref{eq:dual_discrete2-intro}, we are able to
successfully establish the paper's primary ($L^2$ stability) conclusion:
\begin{equation}\label{eq:main-intro}
	\Ex\sum_i \abs{u_i(t)}^2\leq C \sum_i \abs{u_i(0)}^2,
	\quad t\in [0,T],
\end{equation}
where $C>0$ depends on
$\left\|\Div V\right\|_{L^p}$---but \textit{not} $\left\|\Div V\right\|_{L^\infty}$.

This paper---and the $L^2$ estimate
\eqref{eq:main-intro}---represents
one of the first attempts to explore
noise-based regularisation to help
construct consistent and stable (thus convergent)
numerical schemes for stochastic transport equations.

In the subsequent sections, the scheme and its analysis will
be expounded within the context of Cartesian grids in
arbitrary spatial dimensions. To maintain the technical 
complexity at a digestible level, our choice has been to
discretise solely in the space variable. We anticipate
that our reasoning remains valid when applying a suitable
time-discretisation method, such as an implicit one.
While this paper primarily focuses on the stochastic
transport equation \eqref{eq:stratonovich_eq},
the underlying principles can be extended to apply
to the stochastic continuity equation, in which
$\Div (Vu)$ is substituted in for the term
$V \cdot \nabla  u$ in \eqref{eq:stratonovich_eq}.
Additionally, we can also accommodate a time-dependent
velocity $V=V(t,x)$.  At the end of Section \ref{sec:duality},
we explore certain design principles that can be employed
to cultivate alternative discretisation schemes that
showcase $L^2$ stability.

\subsection{Structure of the paper}
Section \ref{sec:motivation_paper} provides the
definition of a weak $L^2$ solution, which
serves as the target for convergence of our
numerical scheme. We also prove uniqueness
of these solutions; existence of solutions
will be a consequence of the convergence
of our numerical method. In Section \ref{sec:nummethod},
we give a detailed exposition
of the difference scheme and state our
main result (Theorem \ref{thm:main_conv}).
Section \ref{sec:greens_functions} offers a
detailed analysis of a class of parabolic
difference schemes. In Section \ref{sec:duality},
we employ the duality approach and build
upon the findings in Section \ref{sec:greens_functions}
to establish the $L^2$ stability of the difference
scheme. In Section \ref{sec:convergence_of_scheme},
we demonstrate the convergence of the
difference scheme, thereby concluding the
proof of the main result presented in Section
\ref{sec:nummethod}. Finally, in the appendix,
we provide some technical calculations
utilised elsewhere in the paper.

\section{Stochastic transport equation}\label{sec:motivation_paper}
In this section we introduce a notion of weak $L^2$ solution
for \eqref{eq:stratonovich_eq}. Throughout the paper, we will
will work on a fixed filtered probability space, which consists
of a complete probability space $(\Omega, \mathcal{F}, \mathbb{P})$
and a complete right-continuous filtration $(\mathcal{F}_{t})_{t \ge 0}$.
The solution $u$ and the $d$-tuple $W=(W_1,\ldots,W_d)$
of independent Wiener processes will be
defined on the probability space and be
adapted to the filtration $(\mathcal{F}_t)_{t\ge 0}$.
For background information on stochastic analysis and SPDEs
such as \eqref{eq:stratonovich_eq}
(with Lipschitz coefficients), we refer the reader
to \cite{Kunita:1990aa} and \cite{Chow:2015aa}.

\subsection{Weak solutions}
Here and elsewhere, we work under
Assumption \ref{ass:main_assumption}.
The It\^o formulation of \eqref{eq:stratonovich_eq} is
\begin{align}\label{eq:ito_eq}
	0 = \d u + V \cdot \nabla u\,\d t
	+ \sigma \nabla u\, \cdot \d W
	- \frac12 \sigma \nabla \cdot
	\bk{\sigma \nabla u} \,\d t
\end{align}
This equation will be used to define weak solutions.
(We show in Lemma \ref{lem:ito-to-stratonovich}
that \eqref{eq:stratonovich_eq} and \eqref{eq:ito_eq}
are indeed equivalent.)

\begin{definition}[Weak solution]
\label{def:wk_sol}
Let $u^0 \in L^2(\R^d)$. A random field
$u$ is a weak $L^2$ solution of
\eqref{eq:stratonovich_eq} with initial data $u^0$ if
\begin{itemize}
	\item[(i)] $u \in L^2(\Omega \times [0,T] \times \R^d)$,

	\item[(ii)] there is a version $\bar{u}$ of $u$ such that
	for every $\varphi \in C^\infty_c(\R^d)$,
	the $\R^d$-valued process
	$t \mapsto \int_{\R^d} \bar{u}(t,x) \varphi(x)\,\d x$
	is a.s. a continuous, adapted semi-martingale, 

	\item[(iii)] for every $\varphi \in C^\infty_c(\R^d)$,
	and all $t \in (0,T]$,
	\begin{align*}
		0 & =  \int_{\R^d} \varphi \bar{u}(t) \,\d x
		- \int_{\R^d} \varphi u^0\,\d x\\*
		&\relspace
		{-}  \int_0^t \int_{\R^d} \nabla \cdot
		\bk{\varphi V} \bar{u} \,\d x \,\d s
		{-}  \int_0^t \int_{\R^d}  \bar{u}
		\nabla \bk{\sigma \varphi} \, \d x \cdot  \,\d W \\*
		&\relspace
		{-} \frac12 \int_0^t \int_{\R^d} \bar{u}\nabla
		\cdot \bk{\sigma \nabla\bk{\sigma \varphi}} \,\d x \,\d s,
	\end{align*}
	$\mathbb{P}$-almost surely.
\end{itemize}
\end{definition}

``Weak" in our context refers to weak solutions 
in the PDE sense --- our solutions are probabilistically strong.
The Cauchy problem for \eqref{eq:stratonovich_eq} has a unique
weak solution per Definition \ref{def:wk_sol} in the class of 
solution adapted to the natural filtration of the Wiener process $W$; see
Theorem \ref{thm:main_wp} and Remark \ref{rem:uniqueness1}.

\subsection{Well-posedness of the stochastic transport equation}
\label{sec:well-posedness}
The existence and uniqueness of weak
solutions for \eqref{eq:stratonovich_eq}
(with $\sigma \equiv 1$) have been studied
extensively under different conditions on $V$,
with emphasis on how these conditions
differ from those required in the deterministic
setting ($\sigma \equiv 0$). We briefly mention
that outside the $\sigma \equiv 1$ context
(or where $\sigma$ is bounded and uniformly positive),
well-posedness has also been extensively
studied (see, e.g., \cite{zha2010} and references contained
there). Results for possibly degenerate $\sigma$
do not see any noise induced regularisation, however,
and typically require ${\rm div} V \in L^\infty$, so
we focus the remainder of our discussion in this
subsection to the non-degenerate $\sigma$ case.
Since an existence
result for \eqref{eq:stratonovich_eq}
can be derived from the convergence of
our proposed scheme (in Section \ref{sec:convergence_of_scheme}),
we concentrate on the uniqueness aspect of the theory in this subsection.
Often $V$ is allowed to depend on time, but as
there are no temporal derivatives on $V$, only
very mild integrability conditions are
demanded in $t$.
As $V$ is time-independent for us,
we review major results on uniqueness
by stating requisite conditions in terms of
the $x$-dependence of $V$ only.

For deterministic transport equations, the
foundational work \cite{DiPerna:1989aa}
showed that $V\in W^{1,p'}_\loc(\R^d;\R^d)$,
$\Div V \in L^\infty(\R^d)$,
$V/\bk{1 + \abs{x}}\in L^1(\R^d;\R^d) + L^\infty(\R^d;\R^d)$
are sufficient conditions for the uniqueness
of weak solutions in $L^\infty([0,T];L^p(\R^d))$ with initial
conditions $u^0\in L^p(\R^d)$, for some $p \in [1,\infty]$.
This result was extended to $BV$ vector
fields in \cite{Ambrosio:2004aa}.

Inclusion of transport noise
($\sigma \equiv 1$) permits a
loosening of the assumptions. The work \cite{Flandoli:2010yq}
established strong uniqueness under the
requirements that $V$ is H\"older continuous
and lies in $L^p(\R^d;\R^d)$ for $p > 2$. Uniqueness
for weak $L^\infty$ solutions was also
established in \cite{Attanasio:2011fj} for $V \in (L^\infty
\cap BV_\loc)(\R^d;\R^d)$ and $\Div V
\in L^1(\R^d)$ such that $V$ decomposes
into $V_1 + V_2$, each satisfying further
integrability conditions.
Finally, \cite[Theorem 1.2, Section 3]{Beck:2019aa}
established path-by-path uniqueness for
weak $L^p$-solutions using the Fredholm
alternative on the dual equation under Prodi--Serrin
type conditions, which, neglecting the
time dependence of $V$, simplify to
$V \in L^p(\R^d;\R^d)$ and
$\Div V \in L^p(\R^d)$, with $p > d$.

The uniqueness result of \cite{Mau2011}
can be applied to \eqref{eq:stratonovich_eq}
with minimal modifications for
$\sigma \in W^{2,\infty}(\R^d)$. In particular,
projection to the $n$th Wiener chaos commutes with
multiplication by any deterministic function $\sigma$.
In order to adapt the proof presented in \cite[Lemma 3.2]{Mau2011},
it is simply a matter of replacing the heat kernel
for the constant $\sigma$ case with a continuous version
of the fundamental solution $\bm{\Gamma}$
from Section \ref{sec:greens_functions}
for the variable coefficient-case $\sigma=\sigma(x)$.
A suitable reference for this
construction can be found in \cite[Section 9.4]{Fri1964}.
Specifically, we can construct a Green's function
$\Gamma$ for the heat operator
$\varphi \mapsto \partial_t \varphi
-\frac12 \sigma^2(x) \Delta \varphi$
using the parametrix method, and
by \cite[Theorem 9.2, eq.~(9.4.1) and
Theorem 9.7, eq.~(9.6.1)]{Fri1964},
\begin{align}\label{eq:continuous_Gamma}
	\bigl|\nabla^{m+ a}_x \nabla^b_y
	\Gamma(t-s,x,y)\bigr|
	 \le \frac{C}{|t - s|^{\bk{d + |m| + |a| + |b|}/2}}
	 \exp\bk{-c\frac{|x-y|^2}{t - s}},
\end{align}
for constants $C , c > 0$ independent of $x,y,t,s$ and
$0 \le |a| + |b| \le r$, $0 \le |m| \le 2$.
We only need this bound for
$m=0,1$ and $a=b=0$.
Using the bounds above, we obtain
(cf.~Lemmas \ref{lem:full_greensfunction} and
\ref{lem:full_greensfunction2}):
\begin{align*}
	&\|\Gamma(t)\|_{L^\infty(\R^d_x;L^{p'}(\R^d_y))}
	\lesssim t^{-d/2 + d/(2p')} ,
	\quad
	\|\nabla_x \Gamma(t)\|_{L^\infty(\R^d_x;L^{p'}(\R^d_y))}
	\lesssim t^{-(d + 1)/2 + d/(2p')},
\end{align*}
where $p'$ denotes the H\"older conjugate of $p>1$.

Using the existence result
presented in Section \ref{sec:convergence_of_scheme},
and the $L^2$ bound \eqref{eq:L2bound2}
of Theorem \ref{thm:disc_energyestm},
we consolidate the preceding
discussion into a theorem:
\begin{theorem}[Well-posedness]\label{thm:main_wp}
Under Assumption \ref{ass:main_assumption}, 
there exists a weak solution (in the sense of
Definition \ref{def:wk_sol}) to the
Cauchy problem for the stochastic transport equation
\eqref{eq:stratonovich_eq} that is unique in the 
class of solutions adapted to the Brownian filtration
(i.e., Wiener uniqueness holds). Moreover, the solution
satisfies the following a priori estimate:
\begin{align}\label{eq:L2bound2}
	\sup_{t \in [0,T]}\Ex \|u(t)\|_{L^2(\R^d)}^2
	& \lesssim_{\sigma,V,T} \|u^0\|_{L^2(\R^d)}^2.
\end{align}
\end{theorem}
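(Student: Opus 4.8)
The plan is to treat the three assertions---existence, the a~priori $L^2$ bound, and uniqueness---separately, since the first two are immediate consequences of results established elsewhere in the paper and only uniqueness requires genuine work. For existence I would simply invoke the convergence analysis of Section~\ref{sec:convergence_of_scheme}: the piecewise-constant interpolants of the difference scheme converge, along a subsequence and in a suitable (weak / weak-$\star$, together with stochastic) topology, to a limit $u$ that is shown there to be a weak $L^2$ solution in the sense of Definition~\ref{def:wk_sol}. Thus the construction of the scheme itself supplies the existence half of the theorem, and no separate existence argument is needed.

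For the estimate \eqref{eq:L2bound2} I would pass the discrete energy bound to the limit. Theorem~\ref{thm:disc_energyestm} provides a bound of the form $\Ex\|u_{\Dx}(t)\|_{L^2(\R^d)}^2 \lesssim \|u^0\|_{L^2(\R^d)}^2$ that is uniform in the grid size $\Dx$ and in $t\in[0,T]$. Because the norm on $L^2(\Omega\times\R^d)$ is weakly lower semicontinuous, the weak limit $u(t)$ inherits the same bound; since the discrete bound is uniform in $t$, taking the supremum over $t\in[0,T]$ preserves it, giving \eqref{eq:L2bound2}.

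The main obstacle is uniqueness, which I would obtain by the duality strategy already sketched in Section~\ref{sec:well-posedness}, adapting \cite{Mau2011}. Given two weak solutions $u_1,u_2$ sharing the datum $u^0$, the difference $u=u_1-u_2$ solves \eqref{eq:ito_eq} weakly with zero initial data, and the goal is to show $u\equiv 0$. Following \cite{Mau2011}, I would project onto the $n$th Wiener chaos---an operation that commutes with multiplication by the deterministic coefficient $\sigma$---thereby reducing the question to a deterministic backward dual problem governed by the operator $\partial_t+\tfrac12\sigma^2\Delta+\Div V$. This dual problem I would solve by a Duhamel/Picard iteration built on the fundamental solution $\Gamma$ of the variable-coefficient heat operator $\partial_t-\tfrac12\sigma^2\Delta$, constructed by the parametrix method for $\sigma\in W^{3,\infty}$ as in \cite[Section~9.4]{Fri1964}. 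The crucial point, and the delicate step, is that the Gaussian derivative bounds \eqref{eq:continuous_Gamma} yield the estimates $\|\Gamma(t)\|_{L^\infty_x L^{p'}_y}\lesssim t^{-d/2+d/(2p')}$ and $\|\nabla_x\Gamma(t)\|_{L^\infty_x L^{p'}_y}\lesssim t^{-(d+1)/2+d/(2p')}$, which are exactly strong enough to absorb the lower-order forcing $\Div V\,\varphi$ through H\"older's inequality with conjugate exponents $p,p'$; the hypothesis $p>d$ makes the resulting time singularities integrable at $t=0$, so the iteration converges and produces a dual solution $\varphi$ with enough regularity to serve as a legitimate test function in Definition~\ref{def:wk_sol}. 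Feeding $\varphi$ back into the weak formulation forces every Wiener-chaos component of $u$ to vanish, whence $u=0$. I expect the verification that $\varphi$ is admissible---and the bookkeeping of the chaos expansion together with the parametrix bounds---to be the most technical part of the argument.
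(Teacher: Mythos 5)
Your proposal matches the paper's own argument: existence is taken from the convergence analysis of Section~\ref{sec:convergence_of_scheme}, the bound \eqref{eq:L2bound2} is inherited from the uniform discrete estimate of Theorem~\ref{thm:disc_energyestm} by weak lower semicontinuity, and uniqueness is obtained by adapting the Wiener-chaos duality of \cite{Mau2011} with the heat kernel replaced by the parametrix-built fundamental solution satisfying \eqref{eq:continuous_Gamma}. This is essentially the same route the paper takes (the paper presents it as a consolidation of the discussion in Section~\ref{sec:well-posedness}), so no further comparison is needed.
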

In the foregoing and subsequently, we use
$\lesssim_{\sigma, V, T}$ etc., to indicate that the implicit
constant depends on the subscripted quantities,
in this case $\sigma$, $V$, and $T$.

\begin{remark}\label{rem:uniqueness1}
We are able to handle random initial data  
$u^0 \in L^2(\Omega \times \R^d)$ as well. 
In this case the uniqueness result 
of \cite{Mau2011} cannot be directly applied 
as it concerns only the class of solutions 
adapted to the Wiener filtration.

Suppose, in addition to Assumption \ref{ass:main_assumption}, 
that  $V \in W^{1,2}(\R^d)$. 
With further reference to \eqref{eq:continuous_Gamma} 
and to \cite{Beck:2019aa}, the duality method 
itself can be used to establish pathwise 
uniqueness. The  difference $v = u_1 - u_2$ of two solutions
to \eqref{eq:stratonovich_eq} satisfies \eqref{eq:stratonovich_eq}
with initial condition $v(0) \equiv 0$.
The duality argument leading up to \eqref{eq:energygronwall_cont}
above applied to  $v$ can be used to show that
$\Ex \norm{v}_{L^2_x}^2 = 0$, i.e., pathwise uniqueness.
In the continuous setting, it is necessary to
smoothen $\sigma$ with standard mollifiers. This leads
to DiPerna--Lions-type commutator brackets, as well as
second order commutators, both of which are known to vanish
appropriately under our conditions here (see e.g.~\cite{Punshon-Smith:2018aa,Galimberti:2021aa}).

The strong well-posedness for the transport equation
follows in the same vein as the result of \cite{Galimberti:2024aa}
for the stochastic continuity equation (as sketched out above),
and can be applied here to establish
uniqueness with minimal modifications for $V \in W^{1,2}(\R^d)$ and
$\sigma \in W^{3,\infty}(\R^d)$.
\end{remark}

Finally, we briefly discuss the significance
of requirement (ii) of Definition \ref{def:wk_sol}.
This requirement ensures that
the resulting Stratonovich integral
is well-defined, and the conversion of
$\int_0^t \int_{\R^d} u(s)
\nabla \bk{\sigma \varphi}\,\d x\circ \d W$
into an It\^o integral plus a
cross variation term holds rigorously
 \cite[p.~1460 and Proposition~3]{Attanasio:2011fj}.
We show this for the $L^2$ solutions that we consider here.

\begin{lemma}[It\^o-to-Stratonovich conversion]\label{lem:ito-to-stratonovich}
Let $u$ be a weak solution to the stochastic transport
equation \eqref{eq:stratonovich_eq} in the sense of
Definition \ref{def:wk_sol}, replacing (iii) with:
\begin{itemize}
\item[(iii')] for every $t \in [0,T]$ and every
$\varphi \in C^\infty_c( \R^d)$,
\begin{align*}
	0& =\int_{\R^d} \varphi u(t) \,\d x
	- \int_{\R^d} \varphi u^0\,\d x\\
	&\relspace
	{-}\int_0^t \int_{\R^d} u\nabla \cdot \bk{\varphi V} \,\d x \,\d s
	{-}\int_0^t \int_{\R^d}  u \nabla \bk{\sigma \varphi} \, \d x \circ  \,\d W,
\end{align*}
$\mathbb{P}$-almost surely.
\end{itemize}
Then $u$ is a weak
solution to \eqref{eq:stratonovich_eq} in the
sense of Definition \ref{def:wk_sol}, and vice versa.
\end{lemma}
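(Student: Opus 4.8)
The plan is to observe that (iii) and (iii') differ only in the replacement of the Stratonovich stochastic integral by an It\^o integral plus the It\^o--Stratonovich cross-variation correction, and then to verify that this correction reproduces \emph{exactly} the parabolic term $-\frac12\int_0^t\int_{\R^d}\bar u\,\nabla\cdot\bk{\sigma\nabla\bk{\sigma\varphi}}\,\d x\,\d s$ of (iii). Fix $\varphi\in C_c^\infty(\R^d)$ and, for each $j$, abbreviate $\psi_j:=\partial_{x^j}\bk{\sigma\varphi}$ and
\[
	N^j_t:=\int_{\R^d}\bar u(t,x)\,\psi_j(x)\,\d x ,
\]
so that the Stratonovich term in (iii') equals $\sum_j\int_0^t N^j_s\circ\d W^j_s$. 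By the definition of the Stratonovich integral against a Brownian motion, $\int_0^t N^j_s\circ\d W^j_s=\int_0^t N^j_s\,\d W^j_s+\tfrac12[N^j,W^j]_t$; summing over $j$ turns the Stratonovich term into the It\^o integral of (iii) plus $\tfrac12\sum_j[N^j,W^j]_t$. The whole lemma therefore reduces to the identity $\sum_j[N^j,W^j]_t=\int_0^t\int_{\R^d}\bar u\,\nabla\cdot\bk{\sigma\nabla\bk{\sigma\varphi}}\,\d x\,\d s$.

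To evaluate the covariation I would extract the martingale part of each $N^j$ by applying the It\^o weak form (iii) with the test function $\psi_j$ in place of $\varphi$: this identifies the martingale component of $N^j$ as $\sum_k\int_0^t\bk{\int_{\R^d}\bar u(s)\,\partial_{x^k}\bk{\sigma\psi_j}\,\d x}\d W^k_s$, the bounded-variation terms being irrelevant to the covariation with the continuous martingale $W^j$. Since $\bigl[\int_0^\cdot a_k\,\d W^k,\,W^j\bigr]_t=\int_0^t a_j\,\d s$, only $k=j$ survives, whence $[N^j,W^j]_t=\int_0^t\int_{\R^d}\bar u(s)\,\partial_{x^j}\bk{\sigma\,\partial_{x^j}\bk{\sigma\varphi}}\,\d x\,\d s$. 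Summing over $j$ produces precisely $\int_0^t\int_{\R^d}\bar u\,\nabla\cdot\bk{\sigma\nabla\bk{\sigma\varphi}}\,\d x\,\d s$, which is the desired correction; the converse implication follows by reading this computation in reverse.

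The \textbf{main obstacle} is that $\psi_j=\partial_{x^j}(\sigma\varphi)$ and the iterated coefficient $\sigma\,\partial_{x^j}(\sigma\varphi)$ are not smooth but only compactly supported $W^{2,\infty}$ and $W^{3,\infty}$ functions, so the weak identities (ii)--(iii), stated for $C_c^\infty$ test functions, must first be extended to this larger class before $\psi_j$ can legitimately be used as a test function. I would carry out this extension by mollification: approximate $\psi_j$ by $\psi_j^\eps\in C_c^\infty$ converging to $\psi_j$ together with the relevant derivatives, and pass to the limit term-by-term, handling the stochastic integral via the It\^o isometry (or Burkholder--Davis--Gundy) together with $u\in L^2(\Omega\times[0,T]\times\R^d)$. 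A derivative count shows that $\nabla\cdot\bk{\sigma\nabla\bk{\sigma\psi_j}}$ falls on at most the third derivative of $\sigma$, so the hypothesis $\sigma\in W^{3,\infty}$ of Assumption \ref{ass:main_assumption} is exactly what makes every term in the extended weak formulation a well-defined (locally $L^2$) pairing against $u$. This same extension guarantees, via requirement (ii), that each $N^j$ is a continuous adapted semimartingale, so that the Stratonovich integral in (iii') is well defined to begin with; once the weak form is available for compactly supported $W^{3,\infty}$ test functions, the algebraic identification above is immediate and the remaining estimates are routine.
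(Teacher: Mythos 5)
Your proposal is correct and follows the same route as the paper's (very terse) proof: write the Stratonovich integral as the It\^o integral plus $\tfrac12[X,W]$, and identify the cross-variation with the parabolic correction term. You supply details the paper omits --- the explicit computation of $[N^j,W^j]$ via the martingale part of $t\mapsto\int \bar u\,\partial_{x^j}(\sigma\varphi)\,\d x$, and the mollification argument needed to use the non-smooth test function $\partial_{x^j}(\sigma\varphi)$ --- and these are exactly the right points to address.
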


\begin{proof}
For $X(t) \coloneqq \int_{\R^d} u(t)
\nabla \bk{\sigma \varphi}  \,\d  x$,
since $\sigma \in W^{3,\infty}(\R^d)$ is sufficiently
smooth, the lemma follows from the 
Stratonovich-to-It\^o conversion formula,
\[
\int_0^T X \circ \d W = \int_0^T X\,\d W
+ \frac12 [X,W](T).\qedhere
\]
\end{proof}

\section{The numerical scheme}\label{sec:nummethod}

In this section, we introduce our
numerical scheme. More precisely, we introduce
specific discretisation notations, define
the scheme, and present the main theorem
regarding stability and convergence.

\subsection{The discrete setting}\label{sec:discrete_setting}
We start by introducing notation for
finite difference operators and
discussing key discrete calculus rules,
such as the Leibniz product rule and
the chain rule. These tools will be
instrumental in analysing the scheme.

Let $\Dx>0$ denote the discretisation
parameter. For indices $\alpha\in\Z^d$
we denote by $x_\alpha\coloneqq \alpha\Dx$
the grid points and we partition the
spatial domain $\R^d$ into grid cells
$\cell_\alpha \coloneqq x_\alpha
	+ [-\nicefrac{\Dx}{2}, \nicefrac{\Dx}{2})^d$.
We will denote by $L^\infty_\Dx(\R^d)$
the collection of functions that are bounded and
piecewise constant on each cell $\cell_\alpha$,
and we set $L^p_\Dx(\R^d)
	\coloneqq L^\infty_\Dx(\R^d)\cap L^p(\R^d)$
for any $p\in[1,\infty]$. To distinguish
elements of these discrete spaces from
their continuous counterparts, we will
denote elements of $L^p_\Dx(\R^d)$
by bold symbols. By definition then,
any function $\bm{f}\in L^\infty_\Dx(\R^d)$
can be written as
\begin{equation*}
	\bm{f}(x) = \sum_{\alpha\in\Z^d} f_\alpha
	\one{\cell_\alpha}(x), \qquad x\in\R^d
\end{equation*}
for $f_\alpha \in \R$, and it is readily seen that
\[
\|\bm{f}\|_{L^p(\R^d)} = \begin{cases}
\Bigl(\Dx^d\sum_{\alpha\in\Z^d}
|f_\alpha|^p\Bigr)^{1/p} & p<\infty,\\
\sup_{\alpha\in\Z^d}|f_\alpha| & p=\infty.
\end{cases}
\]
Due to this characterisation, we interchangeably
view elements of $L^p_\Dx(\R^d)$ either
as functions of $x\in\R^d$ or as sequences
parametrised by $\alpha\in\Z^d$. The projection
operator $P_\Dx \from L^p(\R^d)\to L^p_{\Dx}(\R^d)$
is defined by
\begin{equation}\label{eq:projection}
	P_\Dx f = \sum_{\alpha\in\Z^d}f_\alpha \one{\cell_\alpha},
	\qquad \text{where } 
	f_\alpha \coloneqq \fint_{\cell_\alpha} f(x)\,\d x,
\end{equation}
where $\fint_c = \frac{1}{|c|}\int_c$,
and $|c|$ denotes the Lebesgue measure
of a measurable set $c\subset\R^d$.

Vector valued versions $L^p_\Dx(\R^d,\R^d)$
of the aforementioned spaces are defined
similarly; vectors in $\R^d$ will be indexed
by a superscript ${}^j$ for $j=1,\dots,d$.
We let $e_1,\dots,e_d\in\R^d$ denote the
canonical basis, $e_i^j = \delta_{i,j}$.

For functions of several variables, e.g.~$f=f_\alpha(t)$ 
or $g=g_{\alpha,\beta}(t)$ (for $t\in[0,T]$ and 
$\alpha,\beta\in\Z^d$) we shall often abuse 
notation somewhat and write things like
\begin{align*}
\|\bm{f}\|_{L^1_tL^p_\alpha} &= \int_0^T \Bigl(\Dx^d\sum_{\alpha\in\Z^d}
|f_\alpha(t)|^p\Bigr)^{1/p}\,\d t, \\
\|\bm{g}\|_{L^1_tL^p_\alpha L^q_\beta} &= \int_0^T \Bigl(\Dx^d\sum_{\alpha\in\Z^d}
\|g_{\alpha,\cdot}(t)\|_{L^q}^p\Bigr)^{1/p}\,\d t.
\end{align*}
In the same vein, we write e.g.~$L^1_tL^p_\alpha$ or $L^1_tL^p_\alpha L^q_\beta$ for the spaces of discrete functions whose corresponding norms are bounded. (The time interval $[0,T]$ on which the functions are defined are implicitly assumed to be fixed, although we will write e.g.~$L^1_t([0,T])$ in place of $L^1_t$ whenever necessary.)

For discrete functions of two variables $F=F_{\alpha,\beta}$, $G=G_{\alpha,\beta}$ we define the discrete convolution
\begin{equation}\label{eq:convolution-def}
    \bigl(\bm{F}\dconv\bm{G}\bigr)_{\alpha,\beta} \coloneqq \sum_{\eta\in\Z^d} F_{\alpha,\eta}G_{\eta,\beta}\,\Dx^d.
\end{equation}
Similarly, for time-dependent functions of two variables we define
\begin{equation}\label{eq:convolution-time-def}
    \bigl(\bm{F}\dconv\bm{G}\bigr)_{\alpha,\beta}(t) \coloneqq \int_0^t \sum_{\eta\in\Z^d} F_{\alpha,\eta}(t-s)G_{\eta,\beta}(s)\,\Dx^d\,\d s.
\end{equation}
The more familiar variants for functions of one spatial variable, $f=f_\alpha$ or $f=f_\alpha(t)$, are obtained by setting $F_{\alpha,\beta} =f_{\alpha-\beta}$ and $G_{\alpha,\beta}=g_\alpha$, leading to
\begin{equation}\label{eq:convolution-def-univariate}
\begin{split}
(\bm{f}\dconv\bm{g})_\alpha &= \sum_{\eta\in\Z^d} f_{\alpha-\eta}g_\eta\,\Dx^d, \\
(\bm{f}\dconv\bm{g})_\alpha(t) &= \int_0^t\sum_{\eta\in\Z^d} f_{\alpha-\eta}(t-s)g_\eta(s)\,\Dx^d\,\d s.
\end{split}
\end{equation}
We shall need the following variant of Young's convolution inequality.
\begin{lemma}\label{lem:young}
Let $p_1,p_2,p_3,q_1,q_2,q_3\in[1,\infty]$ satisfy $\frac{1}{p_3}+\frac{1}{q_2} = 1$.
\begin{enumerate}[label=(\roman*)]
\item If $\bm{F}\in L^{p_2}_\alpha L^{p_3}_\beta$ 
and $\bm{G}\in L^{q_2}_\alpha L^{q_3}_\beta$ then
\[
\|\bm{F}\dconv\bm{G}\|_{L^{p_2}_\alpha L^{q_3}_\beta} 
	\leq \|\bm{F}\|_{L^{p_2}_\alpha L^{p_3}_\beta} 
	\|\bm{G}\|_{L^{q_2}_\alpha L^{q_3}_\beta}.
\]
\item If $\bm{F}\in L^{p_1}_t L^{p_2}_\alpha L^{p_3}_\beta$ 
and $\bm{G} \in L^{q_1}_t L^{q_2}_\alpha L^{q_3}_\beta$ 
for $\frac{1}{p_1}+\frac{1}{q_1}=1+\frac{1}{r_1},$ then
\[
\|\bm{F}\dconv\bm{G}\|_{L^{r_1}_tL^{p_2}_\alpha L^{q_3}_\beta} 
	\leq \|\bm{F}\|_{L^{p_1}_t L^{p_2}_\alpha L^{p_3}_\beta} 
	\|\bm{G}\|_{L^{q_1}_t L^{q_2}_\alpha L^{q_3}_\beta}.
\]
\end{enumerate}
\end{lemma}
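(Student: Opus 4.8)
The plan is to derive (i) directly from the discrete Minkowski and Hölder inequalities, and then to bootstrap to (ii) by slicing in the time variable and reducing to the classical continuous Young inequality. The whole argument is bookkeeping of which exponent pairs with which; the only genuinely structural input is the conjugacy condition $\frac{1}{p_3}+\frac{1}{q_2}=1$, which is precisely the Hölder pairing in the contracted index $\eta$.

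For (i) I would fix $\alpha\in\Z^d$ and regard $\beta\mapsto(\bm{F}\dconv\bm{G})_{\alpha,\beta}=\Dx^d\sum_\eta F_{\alpha,\eta}G_{\eta,\beta}$ as a superposition, indexed by $\eta$, of the slices $G_{\eta,\cdot}$ weighted by $\Dx^d F_{\alpha,\eta}$. The triangle inequality for the $L^{q_3}_\beta$-norm (discrete Minkowski, valid since $q_3\ge1$) gives
\[
\bigl\|(\bm{F}\dconv\bm{G})_{\alpha,\cdot}\bigr\|_{L^{q_3}_\beta}
\le \Dx^d\sum_\eta |F_{\alpha,\eta}|\,\|G_{\eta,\cdot}\|_{L^{q_3}_\beta}.
\]
Writing $g_\eta:=\|G_{\eta,\cdot}\|_{L^{q_3}_\beta}$ and applying the weighted Hölder inequality in $\eta$ with the conjugate pair $(p_3,q_2)$---this is exactly where $\frac{1}{p_3}+\frac{1}{q_2}=1$ enters---yields
\[
\bigl\|(\bm{F}\dconv\bm{G})_{\alpha,\cdot}\bigr\|_{L^{q_3}_\beta}
\le \|F_{\alpha,\cdot}\|_{L^{p_3}_\eta}\,\|g\|_{L^{q_2}_\eta}
= \|F_{\alpha,\cdot}\|_{L^{p_3}_\eta}\,\|\bm{G}\|_{L^{q_2}_\alpha L^{q_3}_\beta},
\]
the last factor being independent of $\alpha$. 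Taking the $L^{p_2}_\alpha$-norm of both sides and recognising $\bigl(\Dx^d\sum_\alpha\|F_{\alpha,\cdot}\|_{L^{p_3}_\eta}^{p_2}\bigr)^{1/p_2}=\|\bm{F}\|_{L^{p_2}_\alpha L^{p_3}_\beta}$ (the contracted index $\eta$ is just the second index of $F$) then closes (i).

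For (ii) I would freeze the outer time variable and write the time convolution \eqref{eq:convolution-time-def} as an integral of spatial convolutions, $(\bm{F}\dconv\bm{G})(t)=\int_0^t\bigl(\bm{F}(t-s)\dconv\bm{G}(s)\bigr)\,ds$. Since $\|\cdot\|_{L^{p_2}_\alpha L^{q_3}_\beta}$ is a genuine norm (all exponents $\ge1$), Minkowski's integral inequality pulls it inside the $ds$-integral, and part (i) bounds the integrand slicewise. With $a(\tau):=\|\bm{F}(\tau)\|_{L^{p_2}_\alpha L^{p_3}_\beta}$ and $b(s):=\|\bm{G}(s)\|_{L^{q_2}_\alpha L^{q_3}_\beta}$ (both extended by zero off $[0,T]$) this produces the pointwise-in-$t$ domination
\[
\bigl\|(\bm{F}\dconv\bm{G})(t)\bigr\|_{L^{p_2}_\alpha L^{q_3}_\beta}
\le \int_0^t a(t-s)\,b(s)\,ds = (a*b)(t),
\]
with $*$ the ordinary convolution on $\R$. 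Taking the $L^{r_1}_t$-norm and invoking the classical Young inequality, whose exponent condition $\frac{1}{p_1}+\frac{1}{q_1}=1+\frac{1}{r_1}$ is exactly the hypothesis, gives $\|a*b\|_{L^{r_1}_t}\le\|a\|_{L^{p_1}_t}\|b\|_{L^{q_1}_t}$, which is the claim.

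The proof has no deep obstacle; the main thing to watch is the legitimacy of Minkowski's integral inequality for the mixed norm in (ii), which requires that $L^{p_2}_\alpha L^{q_3}_\beta$ be an honest normed space rather than a mere quasi-norm---guaranteed by $p_2,q_3\ge1$. The only other delicate point is tracking the $\Dx^d$ weights consistently through both \eqref{eq:convolution-def} and the norms, but they cancel cleanly because the weight appears identically in the Hölder pairing and in the definition of the discrete $L^p$ norms.
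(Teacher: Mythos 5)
Your proof is correct. Part (ii) is essentially identical to the paper's argument: Minkowski's integral inequality to move the mixed spatial norm inside the $\d s$-integral, part (i) applied slicewise, and then the classical one-dimensional Young inequality in $t$ under the hypothesis $\tfrac{1}{p_1}+\tfrac{1}{q_1}=1+\tfrac{1}{r_1}$. Part (i) takes a mildly different route: the paper realises each Lebesgue norm via its dual (supremum) formulation, pushes the inner supremum past the sum over the contracted index $\eta$, and then applies H\"older; you instead apply the triangle inequality for $L^{q_3}_\beta$ directly to the superposition $\sum_\eta \Dx^d F_{\alpha,\eta}G_{\eta,\cdot}$ and then H\"older in $\eta$ with the pair $(p_3,q_2)$. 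The structural content is the same---the only real input is the H\"older pairing $\tfrac{1}{p_3}+\tfrac{1}{q_2}=1$ in the contracted index---but your version is arguably a little cleaner, since it avoids having to justify interchanging the inner supremum with the sum over $\eta$ (which in the paper's write-up implicitly requires passing to absolute values of $F_{\alpha,\eta}$) and works uniformly for the endpoint exponents without appealing to the duality characterisation of $\ell^\infty$. Your closing remarks about the legitimacy of Minkowski's integral inequality for the mixed norm and the consistent bookkeeping of the $\Dx^d$ weights are exactly the right points to flag, and both check out.
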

The proof is given in the Appendix. For discrete functions 
of one or two variables we have similarly, for example,
\begin{equation}\label{eq:younginequality}
\|\bm{f}\dconv\bm{g}\|_{L^\infty_t L^\infty_\alpha} 
	\leq \|\bm{f}\|_{L^{p_1}_tL^{p_2}_\alpha}
	\|\bm{g}\|_{L^{p_1'}_tL^{p_2'}_\alpha},
\end{equation}
which follows from Lemma \ref{lem:young} by setting 
$F_{\alpha,\beta}(t)=f_{\alpha-\beta}(t)$, 
$G_{\alpha,\beta}(t)=g_\alpha(t)$, and 
$r_1=p_2=q_3=\infty$.

For $\bm{f}\in L^\infty_\Dx(\R^d)$ we define the
(vector-valued) difference operators
\begin{equation}\label{eq:difference_operators}
\begin{gathered}
	\nabla_+^j f_\alpha \coloneqq
	\frac{f_{\alpha + e_j} - f_{\alpha}}{\Delta x},
	\qquad
	\nabla_-^j f_\alpha \coloneqq
	\frac{f_{\alpha} - f_{\alpha-e_j}}{\Delta x}, \\
	\nabla_0^j f_\alpha
	\coloneqq \frac{f_{\alpha + e_j} - f_{\alpha - e_j}}{2\Delta x}
\end{gathered}
\end{equation}
for $\alpha\in\Z^d$ and $j=1,\dots,d$. Throughout this
paper we will abuse notation slightly and write $(\nabla_+ f)_\alpha
= \nabla_+ f_\alpha$, $(\nabla_- f)_\alpha = \nabla_- f_\alpha$ and so on.
When applying difference operators to ``multivariate'' 
maps $(\alpha,\beta)\mapsto f_{\alpha,\beta}\in\R$, 
we will always consider $\beta$ as a fixed
parameter, letting the difference operators
only act on $\alpha$ --- unless otherwise stated.
Given a vector-valued function 
$\bm{V}\in L^\infty_\Dx(\R^d,\R^d)$ we
define the upwind difference operator $D_V$ by
\begin{equation}\label{eq:upwind-scheme}
\begin{split}
	D_V f_\alpha
 	&\coloneqq V_\alpha^+ \cdot \nabla_- f_\alpha
	-V_\alpha^-\cdot \nabla_+ f_\alpha \\
	& = \frac1{\Dx}\sum_{j=1}^d
	\bk{V_\alpha^{j,+} \bk{f_{\alpha} - f_{\alpha - e_j}}
	- V_\alpha^{j,-} \bk{f_{\alpha + e_j} - f_{\alpha}}},
\end{split}
\end{equation}
where $V_\alpha^{j,\pm}\coloneqq \max(\pm V_\alpha^j,0)$.
By extension, we also define the ``dual'' upwind
difference operator
\begin{equation}\label{eq:dual_upwind}
\begin{split}
	D_V'g_\alpha &\coloneqq \nabla_+\cdot (V^+g)_\alpha
	- \nabla_-\cdot(V^-g)_\alpha \\*
	&= \frac{1}{\Dx}\sum_{j=1}^d \bigl(V_{\alpha+e_j}^{j,+}g_{\alpha+e_j}
	- V_\alpha^{j,+}g_\alpha\bigr) - \bigl(V_{\alpha}^{j,-}g_{\alpha}
	- V_{\alpha-e_j}^{j,-}g_{\alpha-e_j}\bigr).
\end{split}
\end{equation}
By a slight abuse of notation we will write
\begin{equation}\label{eq:dual_upwind2}
\begin{split}
	D_V'(1)_\alpha	&= D_V'g_\alpha\bigr|_{g\equiv 1}
	= \frac1{\Dx} \sum_{j=1}^d \bigl(V^{j, + }_{\alpha + e_j}
	- V^{j,+}_\alpha\bigr)
	- \bigl(V^{j, - }_{\alpha} -  V^{j,-}_{\alpha - e_j}\bigr).
\end{split}
\end{equation}

Finally, we introduce the averaging operator
\begin{align}\label{eq:box_defin}
	\square_\pm^j\sigma_\alpha
	= \frac{\sigma_{\alpha\pm e_j}+\sigma_\alpha}{2}.
\end{align}

The following discrete rules of
calculus will be used heavily in this work.

\begin{lemma}[Discrete calculus]\label{lem:discrete_calculus}
For any scalar-valued grid functions $\bm{f},
\bm{g}$ and vector-valued grid function
$\bm{V}$, we have the following rules of calculus:

\begin{enumerate}[label=(\roman*)]
	\item\label{lem:discrete_calculus_product-rule}
	Product rule:	
	\[
	\nabla_\pm^j (f_\alpha g_\alpha)
	=  f_{\alpha \pm e_j} \nabla_\pm^j g_\alpha
	+ g_\alpha \nabla_\pm^j f_\alpha.
	\]

	\item\label{lem:discrete_calculus_upwind-product-rule}
	Product rule for upwind differencing:
	\[
	D_V' g_\alpha = g_\alpha D_V'(1)_\alpha
	+ \frac1{\Dx}\sum_{j=1}^d \bigl(g_{\alpha + e_j} - g_\alpha\bigr)
	V^{j, + }_{\alpha + e_j} - \bigl(g_\alpha - g_{\alpha - e_j}\bigr)
 	V^{j, - }_{\alpha - e_j}
	\]

	\item\label{lem:discrete_calculus_differentiate-convolution}
	Differences of convolutions:
	\[
	\nabla_\pm ({\bm f}\dconv{\bm g}) = (\nabla_\pm {\bm f})\dconv{\bm g}
	= {\bm f}\dconv(\nabla_\pm {\bm g}).
	\]

	\item\label{lem:discrete_calculus_int-by-parts}
	Summation-by-parts:
	\[
	\sum_{\alpha\in\Z^d} f_\alpha \nabla_\pm g_\alpha
	= -\sum_{\alpha\in\Z^d} g_\alpha \nabla_\mp f_\alpha.
	\]

	\item\label{lem:discrete_calculus_upwind-int-by-parts}
	Summation-by-parts for upwind differencing:
	\[
	\sum_{\alpha\in\Z^d} g_\alpha
	 D_V f_\alpha
	= - \sum_{\alpha\in\Z^d} f_\alpha
	D_V'g_\alpha.
	\]

	\item \label{lem:discrete_calculus_chain-rules}
	Chain rules: for any $\beta\in C^2(\R)$ and $\alpha\in \Z^d$,
	\begin{align*}
		& \nabla_+\beta(f_\alpha) = \beta'(f_\alpha)\nabla_+ f_\alpha
		+\frac{\Delta x}{2} \beta''(\xi_\alpha^+)\abs{\nabla_+ f_\alpha}^2,
		\\ &
		\nabla_- \beta(f_\alpha) = \beta'(f_\alpha)\nabla_- f_\alpha
		- \frac{\Delta x}{2} \beta''(\xi_\alpha^-)\abs{\nabla_- f_\alpha}^2,
	\end{align*}
	and
	\begin{equation}\label{eq:2nd-order-commutator-beta}
	\begin{split}
		&\beta'(f_\alpha)\nabla_-\nabla_+ f_\alpha
		-\nabla_-\nabla_+ \beta(f_\alpha)
		\\ & \qquad
		=-\frac{1}{2} \beta''(\xi_\alpha^+)\abs{\nabla_+ f_\alpha}^2
		-\frac{1}{2} \beta''(\xi_\alpha^-)\abs{\nabla_- f_\alpha}^2,
	\end{split}
	\end{equation}
	for some $\xi_\alpha^\pm \in [f_{\alpha }\wedge f_{\alpha \pm e_j},
	f_{\alpha} \vee f_{\alpha \pm e_j}]$.

	\item\label{lem:discrete_calculus_upwind-chain-rule}
	Chain rule for upwind differencing: for any
	$\beta\in C^2(\R)$ and $\alpha\in \Z^d$,
	\begin{align*}
		&\beta'(f_\alpha) D_V f_\alpha
		- D_V \beta(f_\alpha)\\
		& =\frac1{2\Dx}\Bigl( V_\alpha^{j, + }
		{\beta''(\xi_\alpha^-) \bk{f_{\alpha - e_j} - f_\alpha}^2}
		+V_\alpha^{j, - }
		{\beta''(\xi_\alpha^+)\bk{f_{\alpha + e_j} - f_\alpha}^2}\Bigr),
	\end{align*}
	for some $\xi_\alpha^\pm \in [f_{\alpha \pm e_j}  \wedge f_{\alpha} ,
	f_{\alpha \pm e_j}  \vee f_{\alpha}]$.
\end{enumerate}
\end{lemma}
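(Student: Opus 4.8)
The plan is to verify each identity by direct computation from the definitions in \eqref{eq:difference_operators}, \eqref{eq:upwind-scheme}, \eqref{eq:dual_upwind}, and \eqref{eq:convolution-def}, since every assertion is an algebraic consequence of how the operators act on grid functions. For the product rule \ref{lem:discrete_calculus_product-rule} I would expand $\nabla_+^j(f_\alpha g_\alpha)=(f_{\alpha+e_j}g_{\alpha+e_j}-f_\alpha g_\alpha)/\Dx$ and insert the mixed term $\pm f_{\alpha+e_j}g_\alpha$, which splits the difference quotient into the two stated pieces; the backward version is identical after inserting $\pm f_{\alpha-e_j}g_\alpha$. The dual product rule \ref{lem:discrete_calculus_upwind-product-rule} follows by writing $D_V'g_\alpha$ out from \eqref{eq:dual_upwind}, adding and subtracting $g_\alpha$ inside each flux difference, and collecting the terms proportional to $g_\alpha$ to recover $g_\alpha D_V'(1)_\alpha$ (defined in \eqref{eq:dual_upwind2}), with the remaining differences $g_{\alpha\pm e_j}-g_\alpha$ forming the correction sum.

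For \ref{lem:discrete_calculus_differentiate-convolution}, I would simply shift the summation index in \eqref{eq:convolution-def}: applying $\nabla_\pm$ to $(\bm{f}\dconv\bm{g})_\alpha=\sum_\eta f_{\alpha-\eta}g_\eta\,\Dx^d$ acts only on the $\alpha$-argument, so it can be made to land on either factor by the substitution $\eta\mapsto\eta\mp e_j$, proving both equalities at once. The summation-by-parts identity \ref{lem:discrete_calculus_int-by-parts} is the relabelling $\eta=\alpha\pm e_j$ in $\sum_\alpha f_\alpha(g_{\alpha\pm e_j}-g_\alpha)/\Dx$, which converts a forward difference on $g$ into a backward difference on $f$ (no boundary terms appear over $\Z^d$ for summable functions). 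The upwind summation-by-parts \ref{lem:discrete_calculus_upwind-int-by-parts} then follows by applying \ref{lem:discrete_calculus_int-by-parts} to each of the flux terms in $D_V$ from \eqref{eq:upwind-scheme}; the shifted coefficients $V_{\alpha\pm e_j}^{j,\pm}$ that appear after relabelling are exactly those assembled into $D_V'$ in \eqref{eq:dual_upwind}.

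The chain rules \ref{lem:discrete_calculus_chain-rules} and \ref{lem:discrete_calculus_upwind-chain-rule} are the only statements that are not pure algebra: here I would invoke the second-order Taylor expansion of $\beta\in C^2$ with Lagrange remainder, $\beta(f_{\alpha\pm e_j})=\beta(f_\alpha)+\beta'(f_\alpha)(f_{\alpha\pm e_j}-f_\alpha)+\tfrac12\beta''(\xi_\alpha^\pm)(f_{\alpha\pm e_j}-f_\alpha)^2$ with $\xi_\alpha^\pm$ in the stated interval between $f_\alpha$ and $f_{\alpha\pm e_j}$. Dividing through by $\Dx$ gives the two first-order formulae; the opposite sign in the $\nabla_-$ case appears because the backward expansion is solved for $\beta(f_\alpha)-\beta(f_{\alpha-e_j})$, flipping the sign of the quadratic remainder. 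Adding the forward and backward expansions before dividing by $\Dx^2$ yields the second-order commutator \eqref{eq:2nd-order-commutator-beta}, the linear terms cancelling against $\beta'(f_\alpha)\nabla_-\nabla_+f_\alpha$. For \ref{lem:discrete_calculus_upwind-chain-rule} I would substitute the same two Taylor expansions into the upwind fluxes of $D_V\beta(f_\alpha)$ and subtract $\beta'(f_\alpha)D_Vf_\alpha$, so that in each flux the linear term cancels and only the quadratic remainder weighted by the corresponding $V_\alpha^{j,\pm}$ survives.

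There is no serious obstacle: the entire lemma is routine discrete bookkeeping. The only points demanding care are, first, tracking the shifted velocity coefficients in \ref{lem:discrete_calculus_upwind-product-rule} and \ref{lem:discrete_calculus_upwind-int-by-parts}, where the index relabelling moves $V^{j,\pm}$ between neighbouring nodes, and second, correctly recording the location of the intermediate points $\xi_\alpha^\pm$ in the Taylor remainders of \ref{lem:discrete_calculus_chain-rules} and \ref{lem:discrete_calculus_upwind-chain-rule}, since the signs and weights of these remainders enter the sign-definite dissipation estimates later used to establish \eqref{eq:scheme2-intro}.
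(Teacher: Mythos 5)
Your proposal is correct and follows essentially the same route as the paper's own proofs: direct expansion with inserted mixed terms for the product rules, index relabelling for the convolution and summation-by-parts identities, and second-order Taylor expansion with Lagrange remainder for the chain rules (the paper obtains the commutator \eqref{eq:2nd-order-commutator-beta} by subtracting the two first-order chain rules, which is algebraically the same as your adding of the two Taylor expansions). The only cosmetic difference is that the paper deduces \ref{lem:discrete_calculus_int-by-parts} by summing the product rule \ref{lem:discrete_calculus_product-rule} rather than relabelling directly, but both arguments are equivalent and equally valid.
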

The proofs are straightforward and
we relegate them to Appendix \ref{sec:disccalc}.

\subsection{Difference scheme and main result}\label{sec:discretisation}
With the finite difference notation in place,
we can now define our difference scheme. The
velocity field $V$, the noise function $\sigma$,
and the initial function $u^0$  are discretised by their cell averages,
\begin{align}\label{eq:avg_defin}
	V_\alpha \coloneqq
	\fint_{\cell_\alpha} V(x)\,\d x,
	\quad
	\sigma_\alpha \coloneqq
	\fint_{\cell_\alpha} \sigma(x) \,\d x,
	\quad
	u^0_\alpha \coloneqq
	\fint_{\cell_\alpha} u^0(x) \,\d x,
	\quad \alpha\in\Z^d.
\end{align}
We will study the following finite
difference scheme:
\begin{subequations}\label{eq:scheme}
\begin{equation}\label{eq:scheme1}
\begin{split}
	0 &= \d u_\alpha + D_V u_\alpha\,\d t \\
	&\relspace + S_\alpha\nabla_0u_\alpha \cdot \d W
	-\frac12 \sigma_\alpha \sum_{j=1}^d \nabla_+^j
	\bigl(\average_-^j\sigma_\alpha\nabla_-^j u_\alpha\bigr)\,\d t,
\end{split}
\end{equation}
for $(t,\alpha)\in (0,T]\times \Z^d$, with initial values
$u_\alpha(0)=u^0_\alpha$, $\alpha\in \Z^d$.
In \eqref{eq:scheme1}, $D_V$ is the upwind difference \eqref{eq:upwind-scheme}, and
\begin{equation}\label{eq:scheme2}
	S_\alpha \coloneqq \diag\Biggl(\biggl(\sigma_\alpha
	\frac{\average^j_+\sigma_\alpha\average^j_-\sigma_\alpha}
	{(\average^j_+\sigma_\alpha+\average^j_-\sigma_\alpha)/2}
	\biggr)^\hf\Biggr)_{j=1,\dots,d}
\end{equation}
\end{subequations}
where $\square_\pm^j\sigma_\alpha$ is
defined in \eqref{eq:box_defin}. Since
$\sigma$ is bounded and uniformly positive,
the quantity in the square-root in $S_\alpha$ is bounded
and strictly positive. The motivation
behind the specific form of this difference
scheme can be found in the introduction.

Because of its widespread use, we have opted to use the upwind difference operator $D_V u$ instead of the somewhat simpler Lax--Friedrichs operator
\[
D_V^{\rm{LxF}}u_\alpha = V_\alpha \nabla_0 u_\alpha - \frac{\mu\Dx}{2}\nabla_+\cdot\nabla_- u_\alpha
\]
(for some $\mu \geq \|V\|_{L^\infty}$). The analysis for this alternative scheme is almost identical to that of our scheme. See Section \ref{sec:alternative-discretisation} for more on alternative discretisations.

In Section \ref{sec:duality}, we shall
demonstrate that the semi-discrete difference
scheme \eqref{eq:scheme}
is well-defined by establishing the
existence of a unique global solution
per the following definition.

\begin{definition}[Solution to the difference scheme]
\label{def:discrete_soln}
Let $\bm{u}^0 \in L^2_\Dx(\R^d)$.
We say that $\bm{u}\in L^2(\Omega\times[0,T];L^2_\Dx(\R^d))$
is a strong solution to the SDE system \eqref{eq:scheme}
on the filtered probability space
$(\Omega, \mathcal{F}, (\mathcal{F}_t)_{t \in [0,T]}, \mathbb{P}, W)$ if
\begin{itemize}
	\item[(i)] for every $\alpha \in \Z^d$,
	$u_\alpha (0) = u^0_\alpha$ a.s.,

	\item[(ii)] each $u_\alpha$ is adapted
	to the filtration $(\mathcal{F}_t)_{t \in [0,T]}$,

	\item[(iii)] for every $\alpha \in \Z^d$ and $t \in [0,T]$,
	the following holds a.s.:
	\begin{align*}
		u_\alpha(t) ={}& u_\alpha^0
		- \int_0^t  D_V u_\alpha\,\d s \\*
		&{-} \int_0^t S_\alpha\nabla_0u_\alpha \cdot \d W
		{+}\frac12 \int_0^t \sigma_\alpha \sum_{j=1}^d \nabla_+^j
		\bigl(\average_-^j\sigma_\alpha\nabla_-^j u_\alpha\bigr)\,\d s.
	\end{align*}
\end{itemize}
\end{definition}

Our main result is:

\begin{theorem}[Main theorem]\label{thm:main_conv}
Under Assumption \ref{ass:main_assumption}, the following holds.
\begin{itemize}
	\item[(i)]There exists a unique strong solution
	$\bm{u}$ to the
	semi-discrete system \eqref{eq:scheme} satisfying
	\begin{align*}
 		\Ex\sum_{\alpha \in \Z^d} \abs{u_\alpha(t)}^2 \Dx^d
		&\leq C\sum_{\alpha \in \Z^d}
		 |u^0_\alpha|^2\,\Dx^d, \qquad t \in [0,T].
	\end{align*}

	\item[(ii)] The approximations
	$u_{\Dx} = \sum_{\alpha \in \Z^d}
	u_\alpha \one{\cell_\alpha}$ generated by the
	scheme \eqref{eq:scheme} converge
	weakly in $L^2(\Omega\times[0,T]\times\R^d)$
	as $\Dx\to0$ to the unique weak solution
	of \eqref{eq:stratonovich_eq}
	(in the sense of Definition \ref{def:wk_sol}).
\end{itemize}
\end{theorem}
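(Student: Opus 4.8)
The plan is to establish the two assertions separately, both resting on the $\Dx$-uniform $L^2$ estimate that is the technical core of the paper. For part (i), I would fix the grid size $\Dx$ and recast the scheme \eqref{eq:scheme} as a linear It\^o SDE $\d\bm u = \mathcal A\bm u\,\d t + \sum_{j=1}^d \mathcal B_j\bm u\,\d W^j$ on the Hilbert space $L^2_\Dx(\R^d)$. For fixed $\Dx$ the stencils in $D_V$, in $\nabla_0$ and in the discrete parabolic operator have coefficients controlled by $\norm{V}_{L^\infty}$, $\norm{\sigma}_{L^\infty}$ and negative powers of $\Dx$, so $\mathcal A$ and the $\mathcal B_j$ are \emph{bounded} linear operators; the standard theory of linear stochastic evolution equations with bounded coefficients (or a Picard iteration in $L^2(\Omega;C([0,T];L^2_\Dx(\R^d)))$) then yields a unique global strong solution in the sense of Definition \ref{def:discrete_soln}, with no a priori bound needed to rule out explosion, precisely because the generators are bounded at fixed $\Dx$.

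The substantive part of (i) is the bound \eqref{eq:main-intro} with a constant independent of $\Dx$. I would obtain it by the discrete duality argument sketched in the introduction: apply the It\^o chain rule to $(u_\alpha)^2$ and sum against a dual weight $\varphi_\alpha(t)\,\Dx^d$, at which point the martingale-correction term and the dissipation from the discrete parabolic operator collapse, via the structural inequality \eqref{eq:scheme2-intro}, into a sign-definite quantity that may be discarded. The resulting discrete transport balance for the mean energy $\Ex(u_\alpha)^2$ is then closed by choosing $\varphi$ to solve the backward dual scheme \eqref{eq:dual_discrete-intro} with $\varphi_\alpha(T)=1$, in exact analogy with the continuous computation \eqref{eq:energycomputation}--\eqref{eq:energygronwall_cont}. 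The decisive ingredient is the $\Dx$-uniform $L^\infty$ gradient bound \eqref{eq:dual_discrete2-intro} on the dual solution; together with an $L^\infty$ bound on $\varphi$ and a discrete Gronwall inequality it delivers the claimed estimate with a constant depending on $\norm{\Div V}_{L^p}$ rather than $\norm{\Div V}_{L^\infty}$. I expect the proof of \eqref{eq:dual_discrete2-intro}---the discrete parametrix construction for the variable-coefficient semi-discrete heat operator, with its Green's-function estimates---to be \textbf{the main obstacle}; the rest of part (i) is routine or a transcription of the continuous identity.

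For part (ii), the uniform bound makes $\{u_\Dx\}$ bounded, hence weakly precompact, in $L^2(\Omega\times[0,T]\times\R^d)$, so $u_\Dx\weak u$ along a subsequence. To identify $u$ I would multiply \eqref{eq:scheme1} by $\varphi\in C^\infty_c(\R^d)$ and $\Dx^d$, sum over $\alpha$, integrate in time, and transfer every difference operator onto $\varphi$ using the summation-by-parts rules of Lemma \ref{lem:discrete_calculus}, producing a discrete version of Definition \ref{def:wk_sol}(iii). Because the equation is linear, weak convergence suffices to pass to the limit: the deterministic terms converge by weak--strong pairing of $u_\Dx$ against the discrete coefficient operators, which converge strongly to $\nabla\cdot(V\varphi)$ and $\frac12\nabla\cdot(\sigma\nabla(\sigma\varphi))$. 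The velocity term is the delicate case and relies on Lemma \ref{lem:V_DV_discretelimits1}---this is exactly where the per-component hypothesis $\partial_{x^j}V^j\in L^p$, $p>d$, enters. For the stochastic term I would exploit that It\^o integration is a bounded linear map from adapted integrands in $L^2(\Omega\times[0,T])$ into $L^2(\Omega)$, hence weak--weak continuous on the weakly closed subspace of adapted processes: since the spatial pairing $\int u_\Dx\,\nabla_0(S\varphi)\,\d x$ converges weakly in $L^2(\Omega\times[0,T])$ to $\int u\,\nabla(\sigma\varphi)\,\d x$, the corresponding stochastic integrals converge weakly in $L^2(\Omega)$. This identifies $u$ as a weak solution, the continuous adapted semimartingale version required in (ii) following from the structure of the limit identity; uniqueness (Theorem \ref{thm:main_wp}) then forces every subsequential limit to agree, so the whole family $u_\Dx$ converges and the proof is complete.
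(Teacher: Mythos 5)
Your proposal is correct and follows essentially the same route as the paper: part (i) combines a fixed-$\Dx$ well-posedness argument for the linear SDE system (the paper invokes the Liu--R\"ockner variational theorem where you use the equivalent bounded-generator/Picard argument) with the discrete duality estimate built on the sign property \eqref{eq:scheme2-intro}, the non-negative dual solution of the backward parabolic scheme, and the $\Dx$-uniform gradient bound \eqref{eq:dual_discrete2-intro} from the discrete parametrix construction, which you rightly flag as the main obstacle. Part (ii) likewise matches the paper's Banach--Alaoglu compactness, the consistency Lemmas \ref{lem:V_DV_discretelimits1}--\ref{lem:V_DV_discretelimits2}, and the weak--weak continuity of the It\^o integral for identifying the limit, followed by uniqueness to upgrade subsequential to full convergence.
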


This theorem stems directly from the
findings presented in Sections \ref{sec:duality}
and \ref{sec:convergence_of_scheme}.

\section{Discrete Green's function estimates}
\label{sec:greens_functions}

In this section, we provide a detailed
derivation of estimates on the Green's function
(fundamental solution) of semi-discrete parabolic
operators with constant and variable
coefficients. We work out the specifics
of a discrete parametrix method, which might be
of independent interest.

\subsection{The constant coefficient case}\label{sec:constant_coeff}
First, we consider the following semi-discrete
heat equation with constant coefficients:
\begin{equation}\label{eq:discrete_heat_const}
	\frac{\d}{\d t} \varphi_\alpha
	= \nabla_+ \cdot \bigl(c\nabla_- \varphi_\alpha\bigr),
\end{equation}
where $c=\diag(c^1,\dots,c^d)$ for constants
$c^1,\dots,c^d > 0$ and
\begin{equation}\label{eq:def_discrete_laplacian}
	\nabla_+\cdot \bigl(c\nabla_- \phi_\alpha\bigr)
	=\sum_{j=1}^d c^j\frac{\phi_{\alpha + e_j} - 2
	\phi_\alpha + \phi_{\alpha - e_j}}{\Dx^2}
\end{equation}
is a discrete anisotropic Laplacian of $\phi$.
We will realise the solution of \eqref{eq:discrete_heat_const}
as $\bm{\phi}(t)=\bm{\phi}(0)\dconv\bm{a}(t)$ for a fundamental
solution $\bm{a}$. Letting $\bm{\delta}\in L^1_\Dx(\R^d)$
denote the discrete Dirac measure
\[
\delta_\alpha
\coloneqq
\begin{cases}\displaystyle
	\Dx^{-d} & \alpha = 0,
	\\ 0 & \text{elsewhere},
\end{cases}
\]
we define the fundamental solution $\bm{a}$ by
\begin{equation}\label{eq:greensfunction}
	a_\alpha(t) = \sum_{n \ge 0}
	\frac{t^n}{n!} \bigl(\nabla_+ \cdot c\nabla_-\bigr)^n{\delta}_\alpha.
\end{equation}
Then---formally speaking for now---$\bm{a}$
satisfies
\begin{align}\label{eq:discheateq}
	\frac{\d }{\d t} a_\alpha(t)
	= \nabla_+ \cdot \bigl(c\nabla_- a_\alpha(t)\bigr),
	\qquad a_\alpha(0) = {\delta}_\alpha,
	\qquad \alpha\in\Z^d,\,t\in\R.
\end{align}
In the following lemma we shall need the 
modified Bessel function of the first kind 
$I=I_n(r)$, defined as
\begin{equation}\label{eq:besselfunc}
	I_n(r) \coloneqq \frac{1}{\pi}\int_0^\pi
	e^{r\cos(z)} \cos(nz)\,\d z
	= \bigl(\tfrac{r}{2}\bigr)^{|n|}\sum_{k=0}^\infty
	\frac{\bigl(\tfrac{r^2}{4}\bigr)^k}{k!(|n|+k)!}
\end{equation}
for $r\geq0$ and $n\in\Z$; see \cite[equation~(9.5.2)]{BW2016}.

\begin{lemma}[Properties of the fundamental solution]
\label{lem:green_estimates}
The series \eqref{eq:greensfunction} converges
for all $t\in \R$ and the limit $\bm{a}$
lies in $C([0,T];L^1_\Dx(\R^d))$ for any $T >0$. Moreover,
\begin{enumerate}[label=(\roman*)]
	\item $a_\alpha(\cdot)$ lies in $C^1((0,\infty))$, satisfies \eqref{eq:discheateq} for all $\alpha\in\Z^d$,
    and has the representation
     \begin{equation}\label{eq:fundamentalsolution}
    	a_\alpha(t) = \frac{1}{\Dx^d}
    	\prod_{j=1}^d e^{-r^j} I_{\alpha^j}(r^j).
    \end{equation}

	\item\label{lem:green_estimates_positivity}
	$a_{\alpha}(t) > 0$ for all $\alpha\in\Z^d$ and $t>0$.

	\item\label{lem:green_estimates_energy-bound}
	$\|\bm{a}(t)\|_{L^2(\R^d)}
	\leq \|\bm{a}(0)\|_{L^2(\R^d)} < \infty$ for all $t>0$.

	\item\label{lem:green_estimates_unit-mass}
	$\sum_{\alpha \in \Z^d} a_\alpha(t) \Dx^d = 1$ for all $t\geq0$.

	\item\label{lem:green_estimates_derivative-bound}
	For $\ell=1,\dots,d$ and $m = 0,1,2$, recall the definition
	of $\nabla_+^\ell$ in \eqref{eq:difference_operators}, and
	let $(\nabla_+^\ell)^m$ denote $m$
	successive applications of $\nabla_+^\ell$.
    Then
\begin{equation}\label{eq:pointwise_da}
	\begin{aligned}
	\abs{(\nabla_+^\ell)^m a_\alpha(t)}
	&\lesssim t^{-(d + m)/2} \prod_{j = 1}^d
		\biggl(1 + \frac{|x_\alpha^j|^2}{4 \overline{c}t}
	+ \frac{|x_\alpha^j|^3}{|4 \overline{c}t|^\thf}\biggr)^{-1},
	\end{aligned}
	\end{equation}
	 where $\overline{c}\coloneqq \max(c^1,\dots,c^d)$.
	In particular, for $p>1$,
	\begin{equation}\label{eq:a_lp}
		\bigl\|(\nabla_+^\ell)^m\bm{a}(t)\bigr\|_{L^{p'} (\R^d)}
		\leq C_{\ell,m,p} t^{-d/2p - m/2}.
\end{equation}
\end{enumerate}
\end{lemma}

\begin{remark}
Comparing $\bm{a}$ to the fundamental solution 
of the heat equation $\xi(x,t)= (4\pi t)^{-\hf}e^{-|x|^2/4t}$, 
properties (ii)--(iv) and \eqref{eq:a_lp} are very natural. 
The pointwise estimate \eqref{eq:pointwise_da} bounds 
$\bm{a}$ and its derivatives in terms of a 
\emph{truncated series expansion} of $\xi$. In fact, 
any attempt to estimate $\bm{a}$ pointwise in terms of 
$\xi$ will fail in the limits $t\to0$ and $|x|\to\infty$.
\end{remark}

\begin{proof}[Proof of Lemma \ref{lem:green_estimates}]
It is readily checked that the operator
$\nabla_+ \cdot c\nabla_-$ is a bounded linear
operator on $L^1_\Dx(\R^d)$. It therefore
follows that the series \eqref{eq:greensfunction}
converges in the supremum norm on
$C([-T,T];L^1_\Dx(\R^d))$, for all $T>0$.
The fact that $a$ satisfies \eqref{eq:discheateq}
follows from standard ODE theory
(see, e.g., \cite[Theorem 1.2]{Paz1983}).

Integrating \eqref{eq:discheateq} over space yields
\begin{align*}
	\frac{\d}{\d t}\sum_{\alpha \in \Z^d} a_\alpha(t) \Dx^d
	= \sum_{\alpha \in \Z^d} \nabla_+ \cdot \bigl(c\nabla_- a_\alpha(t)\bigr) \Dx^d = 0,
\end{align*}
where we have applied summation-by-parts,
cf.~Lemma~\ref{lem:discrete_calculus}\ref{lem:discrete_calculus_int-by-parts}.
This verifies \ref{lem:green_estimates_unit-mass}.
Similarly, multiplying \eqref{eq:discheateq}
through by $a_\alpha$ and integrating over space yields
\begin{align*}
	\frac12\frac\d{\d t} \sum_{\alpha \in \Z^d} a_\alpha^2(t) \Dx^d
	 + \sum_{\alpha \in \Z^d} \underbrace{\nabla_-
	 a_\alpha(t) \cdot c \nabla_-
	 a_\alpha(t)}_{\geq\,0} \Dx^d =0,
\end{align*}
which confirms \ref{lem:green_estimates_energy-bound}.

As in the continuous case, using the
duality between $\Z^d$ and the $d$-torus of unit side
length $\mathbb{T}^d$ (cf.~e.g.~\cite{Katznelson:2004aa}) we set
\[
\widehat{a}(t, \theta) \coloneqq \sum_{\alpha \in \Z^d} a_\alpha(t)
\exp( 2 \pi i \alpha \cdot \theta),
\quad t > 0,\, \theta \in \T^d.
\]
This series is convergent in $\ell^2(\Z^d)$ as
$(a_{\alpha}(t))_{\alpha\in\Z^d,t\geq0}\in L^\infty([0,\infty);\ell^2(\Z^d))$,
given the equivalent condition that
$\bm{a} \in L^\infty([0,\infty); L^2_\Dx(\R^d))$.
Following \cite{Hoff:1985zm}, from the semi-discrete heat
equation \eqref{eq:discheateq}, and
using summation-by-parts, we obtain
\begin{align*}
	\frac{\d}{\d t}\widehat{a}(t,\theta)
	&= \sum_{\alpha \in \Z^d}
	\nabla_+ \cdot \bigl(c\nabla_- a_\alpha(t)\bigr)
	 e^{2 \pi i \alpha \cdot \theta}
	= \sum_{\alpha \in \Z^d}
	a_\alpha(t)\nabla_- \cdot \bigl(c\nabla_+
	 e^{2 \pi i \alpha \cdot \theta}\bigr) \\
	& = \sum_{\alpha \in \Z^d}a_\alpha(t) e^{2 \pi i \alpha \cdot \theta}
	\sum_{j=1}^d c^j\frac{e^{2\pi i\theta^j} - 2
	+ e^{-2\pi i\theta^j}}{\Dx^2}\\
	& = \Biggl(\sum_{j = 1}^d 2c^j\frac{\cos(2\pi\theta^j)-1}{\Dx^2}\Biggr)
	\widehat{a}(t,\theta).
\end{align*}
By the fact that $\widehat{a}(0,\theta)=\Dx^{-d}$, we obtain
\begin{equation}\label{eq:w_aux1}
\begin{split}
	\widehat{a}(t,\theta)
	&= \frac{1}{\Dx^d}\exp\Biggl(\sum_{j = 1}^d
	2c^jt\frac{\cos(2\pi\theta^j)-1}{\Dx^2}\Biggr).
\end{split}
\end{equation}
Thus, inverting the Fourier transform, using \eqref{eq:w_aux1}
and setting $r^j\coloneqq \frac{2c^jt}{\Dx^2}$, we find that for $t > 0$,
\begin{align*}
	a_\alpha(t)
	&= \int_{[-\hf, \hf)^d} \widehat{a}(t,\theta)
	e^{-2 \pi i \alpha\cdot \theta} \,\d\theta\\
	& = \frac{1}{\Dx^d}\int_{[-\hf, \hf)^d}
	\exp\Biggl(\sum_{j = 1}^d r^j
 	\bigl(\cos(2\pi\theta^j)-1\bigr)\Biggr)
 	e^{-2\pi i\alpha\cdot \theta}\,\d\theta \\
	& = \frac{1}{\Dx^d}\prod_{j=1}^d\Biggl(e^{-r^j}\int_{-\hf}^\hf
	e^{r^j\cos(2\pi\theta^j)} e^{-2\pi i\alpha^j\theta^j}\,\d\theta^j\Biggr) \\
	& = \frac{1}{\Dx^d}\prod_{j=1}^d\Biggl(e^{-r^j}\int_{-\hf}^\hf
	e^{r^j\cos(2\pi\theta^j)}\Bigl(\cos\bigl(2\pi\alpha^j\theta^j\bigr)
	- i \sin\bigl(2\pi\alpha^j\theta^j\bigr)\Bigr)\,\d\theta^j\Biggr).
\end{align*}
The function $\theta^j \mapsto
	 \exp\bk{r^j\cos(2\pi\theta^j)}
\sin(2\pi\alpha^j\theta^j)$ is odd, therefore
the imaginary parts of the integrals
vanish, leaving us with
\begin{align*}
	a_\alpha(t) &= \frac{1}{\Dx^d}\prod_{j=1}^d\Biggl(e^{-r^j}
	\int_{-\hf}^\hf e^{r^j\cos(2\pi\theta^j)}
	\cos(2\pi\alpha^j\theta^j)\,\d\theta^j\Biggr)
	\\ &= \frac{1}{\pi^d\Dx^d}\prod_{j=1}^d
	\Biggl(e^{-r^j}\int_0^{\pi} e^{r^j\cos(z^j)}
	\cos\bigl(\alpha^jz^j\bigr)\,\d z^j\Biggr),
\end{align*}
where we have used symmetry of the
integrand and a change of variables. Using the modified Bessel function \eqref{eq:besselfunc}, we get \eqref{eq:fundamentalsolution}.

We recall that $r^j\coloneqq\frac{2c^jt}{\Dx^2}$
and rewrite the above as
\begin{equation*}
	a_\alpha(t) = \prod_{j=1}^d\Xi(\alpha^j,r^j)
	\Psi_{\alpha^j}(r^j),
\end{equation*}
where
\begin{align*}
\Xi(n,r) &\coloneqq
	\frac1{\sqrt{\pi r \Dx^2}}
	\bk{1 + \frac{n^2}{2r} + \frac{|n|^3}{|2r|^{\thf}}}^{-1},\\
	\Psi_n(r) &\coloneqq \frac{e^{-r}I_n(r)}{\Dx \Xi(n,r)} = (2\pi r)^{\hf}
	\biggl(1 + \frac{n^2}{2r} + \frac{|n|^3}{|2r|^{\thf}}\biggr)e^{-r} I_n(r),
\end{align*}
with $\Xi$ being an order $\thf$ truncation of the Taylor series
for the fundamental solution to the
continuous heat equation.

We claim that $\Psi$ is uniformly bounded, and to this end,
we treat the two cases $|n|\leq \sqrt r$ and $|n|>\sqrt r$ separately. 
In the first case, we first note the following 
simple estimate on the Bessel function:
\begin{align*}
|I_n(r)e^{-r}| &\leq \frac1\pi \int_0^\pi e^{r(\cos z - 1)}\,\d z
\leq \frac1\pi \int_0^\pi e^{-rz^2/6}\,\d z
\leq  2 r^{-\hf}.
\end{align*}
Hence, if $|n|\leq \sqrt r$ then
\begin{align*}
\Psi_n(r) \lesssim r^\hf \bk{1 + \frac{n^2}{2r} 
	+ \frac{\abs{n}^3}{\bk{2r}^{\thf}}} r^{-\hf} \lesssim 1.
\end{align*}
Next, in the regime $|n|>\sqrt r$ we use the estimate
\[
I_n(r)e^{-r} \leq C n^{-3} r
\]
for some $C>0$ independent of $n$ and $r$ 
(see \cite[Lemma 4.1(i)]{AGMP2021}), and get
\[
\Psi_n(r) \lesssim r^\hf \frac{n^3}{r^\thf}  n^{-3} r = 1.
\]
This proves \eqref{eq:pointwise_da} for $m=0$.

With the bound $\Psi \lesssim 1$, we get:
\begin{equation}\label{eq:aLpEstimate}
\begin{split}
\|a(t)\|_{L^{p'}(\R^d)} &\lesssim \Biggl(\sum_{\alpha\in\Z^d}
	\biggl|\prod_{j=1}^d\Xi(\alpha^j,r^j)\biggr|^{p'}\Dx^d\Biggr)^{1/p'} \\
&= \prod_{j=1}^d\Biggl(\sum_{n\in\Z}|\Xi(n,r^j)|^{p'}\Dx\Biggr)^{1/p'}.
\end{split}
\end{equation}
Recalling that $r^j\coloneqq 2c^jt/\Dx^2$, 
we can estimate each of the above series as 
\begin{align*}
\sum_{n\in\Z}|\Xi(n,r^j)|^{p'}\Dx &= \bigl(\pi r \Dx^2\bigr)^{-p'/2} 
	\sum_{n\in\Z}\biggl(1 + \frac{n^2}{2r} 
	+ \frac{|n|^3}{|2r|^{\thf}}\biggr)^{-p'}\Dx \\
&\lesssim \bigl(r\Dx^2\bigr)^{-p'/2}\Dx \int_0^\infty 
	\biggl(1+\frac{x^2}{2r}+\frac{x^3}{(2r)^\thf}\biggr)^{-1}\,\d x \\
&= \bigl(r\Dx^2\bigr)^{-p'/2} r^{1/2} \Dx 
	\int_0^\infty \bigl(1+y^2+y^3\bigr)^{-1}\,\d y \\
&\lesssim t^{-p'/2+1/2}.
\end{align*}
Inserting into \eqref{eq:aLpEstimate} yields \eqref{eq:a_lp} for $m=0$.

Taking the $m$th order difference in the $\ell$th direction directly,
\eqref{eq:fundamentalsolution} gives 
\begin{align*}
&(\nabla_+^\ell)^m a_\alpha(t) = \frac{1}{\Dx^{d+m}}
\Biggl(\prod_{j\neq \ell} e^{-r^j} I_{\alpha^j}(r^j)
\Biggr)\Biggl(e^{-r^\ell}\sum_{k=0}^m
C_k I_{\alpha^\ell+k}(r^\ell)\Biggr)\\
&\qquad= \frac1{\Dx^d}\bk{4c^\ell t}^{-m/2}
\prod_{\substack{j=1\\j \not= \ell}}^d\Xi(\alpha^j,r^j)
\Psi_{\alpha^j}(r^j) \bk{r^\ell}^{m/2} \Biggl(e^{-r^\ell}\sum_{k=0}^m
C_k I_{\alpha^\ell+k}(r^\ell)\Biggr),
\end{align*}
for (binomial) coefficients $C_0,\dots,C_m\in\R$ which
only depend on $m$. The pointwise bound 
\eqref{eq:pointwise_da} for $m = 0,1,2$ then follows from 
\cite[Remark 2.5]{AL2021} and \cite[Lemma 4.1, (ii) \& (iii)]{AGMP2021}.
This concludes the proof of 
\ref{lem:green_estimates_derivative-bound}.
\end{proof}

We note in particular that replacing the
repeated forward differencing in
\eqref{eq:pointwise_da} with alternating forward and
backward differences, to second order, we have
\begin{equation}\label{eq:a_F_bound4}
\begin{aligned}
\abs{\nabla_+^\ell\nabla_-^\ell a_\alpha(t)}
	&\lesssim t^{-\bk{d + 2}/2} \prod_{j = 1}^d
		\bk{1 + \frac{\abs{x_\alpha^j}^2}{4 \overline{c}t}
	+ \frac{\abs{x_\alpha^j}^3}{\abs{4 \overline{c}t}^\thf} }^{-1}.
\end{aligned}
\end{equation}

\begin{lemma}[Solution formula]
\label{lem:duhamel_representation}
Fix $p,q$ satisfying $p > d$ and $2/q+d/p<1$.
Let $\bm{f } \in L^q\bigl([0,T];L^p_\Dx(\R^d)\bigr)$
and $\bm{\psi} \in L^\infty_\Dx(\R^d)$. Then the unique
solution $\bm{\varphi}$ of the inhomogeneous
semi-discrete heat equation
\[
\begin{cases}\displaystyle
	\frac{\d}{\d t}  \varphi_\alpha
	-\nabla_+ \cdot \bigl(c\nabla_-\varphi_\alpha\bigr)
	= f_\alpha, & t \in [0,T], \alpha \in \Z^d, \\
	\varphi_\alpha(0) = \psi_\alpha, & \alpha\in\Z^d
\end{cases}
\]
(cf.~\eqref{eq:def_discrete_laplacian}) is given
by the Duhamel formula
\[
\varphi_\alpha(t)
\coloneqq \bk{\psi \dconv a(t)}_\alpha
+ \int_0^t \bk{a(t - s) \dconv f(s)}_\alpha \,\d s, \qquad t > 0,
\,\, \alpha \in \Z^d.
\]
This function satisfies
\begin{equation}\label{eq:non-homogeneous-bounds}
\begin{split}
	\|\bm{\varphi}(t)\|_{L^\infty(\R^d)}
	&\leq \|\bm{\psi}\|_{L^\infty(\R^d)}
	+ C\|\bm{f}\|_{L^q([0,T];L^p (\R^d))},  \\
	\|\nabla_+ \bm{\varphi}(t)\|_{L^\infty (\R^d)}
	&\leq \|\nabla_+ \bm{\psi}\|_{L^\infty(\R^d)}
	+ C\|\bm{f}\|_{L^q([0,T];L^p (\R^d))},
\end{split}
\end{equation}
for some $C>0$ which is independent of $\Dx$.
\end{lemma}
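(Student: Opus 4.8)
The plan is to separate the homogeneous and inhomogeneous contributions and to exploit two complementary features of the fundamental solution: that $\bm a(t)$ acts like a discrete probability measure (controlling $L^\infty\to L^\infty$), and that it decays in $L^{p'}$ (controlling the Duhamel integral). First I would confirm that the stated Duhamel expression really is the unique solution. For each fixed $\Dx$ the operator $L\coloneqq\nabla_+\cdot c\nabla_-$ is a bounded linear operator on $L^\infty_\Dx(\R^d)$ (a three-point stencil with bounded coefficients), so the Cauchy problem $\tfrac{\d}{\d t}\bm\varphi = L\bm\varphi + \bm f$, $\bm\varphi(0)=\bm\psi$, is a linear ODE in a Banach space with bounded generator. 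Its solution operator is exactly the convolution semigroup $\bm\psi\mapsto\bm\psi\dconv\bm a(t)$ defined by the exponential series \eqref{eq:greensfunction}. Differentiating the Duhamel formula, using $\tfrac{\d}{\d t}\bm a = L\bm a$ and $\bm a(0)=\bm\delta$ (Lemma~\ref{lem:green_estimates}(i)) together with the fact that differences commute with convolution (Lemma~\ref{lem:discrete_calculus}\ref{lem:discrete_calculus_differentiate-convolution}), verifies that $\bm\varphi$ solves the equation; uniqueness follows from boundedness of $L$, or equivalently from the energy identity $\tfrac{\d}{\d t}\norm{\bm\varphi}_{L^2}^2\le 0$ (Lemma~\ref{lem:discrete_calculus}\ref{lem:discrete_calculus_int-by-parts}) applied to the difference of two solutions.

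Next I would establish the $L^\infty$ bound on $\bm\varphi$. For the homogeneous term, parts \ref{lem:green_estimates_positivity} and \ref{lem:green_estimates_unit-mass} of Lemma~\ref{lem:green_estimates} give $\bm a(t)\ge 0$ and $\sum_\eta a_\eta(t)\Dx^d=1$, so $\abs{(\bm\psi\dconv\bm a(t))_\alpha}\le\norm{\bm\psi}_{L^\infty}$. For the Duhamel term, discrete Hölder in space yields $\abs{(\bm a(t-s)\dconv\bm f(s))_\alpha}\le\norm{\bm a(t-s)}_{L^{p'}}\norm{\bm f(s)}_{L^p}$, and \eqref{eq:a_lp} with $m=0$ gives $\norm{\bm a(t-s)}_{L^{p'}}\lesssim (t-s)^{-d/(2p)}$. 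A Hölder inequality in time with exponents $q,q'$ then produces the desired bound, the time singularity being integrable precisely when $\tfrac{d}{2p}q'<1$, i.e.\ $\tfrac1q+\tfrac{d}{2p}<1$, which is implied by the standing hypothesis.

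For the gradient estimate I would transfer $\nabla_+$ onto the kernel via Lemma~\ref{lem:discrete_calculus}\ref{lem:discrete_calculus_differentiate-convolution}, writing $\nabla_+(\bm\psi\dconv\bm a(t)) = (\nabla_+\bm\psi)\dconv\bm a(t)$ and $\nabla_+(\bm a(t-s)\dconv\bm f(s)) = (\nabla_+\bm a(t-s))\dconv\bm f(s)$. The homogeneous piece is again bounded by $\norm{\nabla_+\bm\psi}_{L^\infty}$ through the probability-measure property of $\bm a(t)$. For the Duhamel piece, \eqref{eq:a_lp} with $m=1$ gives $\norm{\nabla_+\bm a(t-s)}_{L^{p'}}\lesssim (t-s)^{-d/(2p)-1/2}$, and the same space-then-time Hölder argument produces a finite integral exactly when $(\tfrac{d}{2p}+\tfrac12)q'<1$, i.e.\ $\tfrac1q+\tfrac{d}{2p}<\tfrac12$, equivalently $\tfrac2q+\tfrac dp<1$ --- precisely the hypothesis of the lemma.

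The main obstacle is this borderline time-integrability in the gradient estimate: the singularity $(t-s)^{-d/(2p)-1/2}$ is integrable against $L^q_t$ only under the sharp condition $2/q+d/p<1$, so the whole statement rests on the sharp $L^{p'}$ decay rate in \eqref{eq:a_lp} for $\nabla_+\bm a$. I would finally check that every constant is $\Dx$-independent, which holds because \eqref{eq:a_lp} is itself $\Dx$-independent and the normalisation $\sum_\eta a_\eta(t)\Dx^d=1$ is valid for every $\Dx$.
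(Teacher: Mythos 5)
Your proposal is correct and follows essentially the same route as the paper: verify the Duhamel formula by differentiating under the convolution (using that differences commute with $\dconv$), then control the inhomogeneous term via the $L^{p'}$ decay \eqref{eq:a_lp} of $\bm a$ and $\nabla_+\bm a$ combined with H\"older/Young in space and time, arriving at exactly the exponent conditions $2/q+d/p<2$ and $2/q+d/p<1$. The only difference is that you spell out the homogeneous term explicitly via positivity and unit mass of $\bm a$ and add a uniqueness argument, both of which the paper leaves implicit.
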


\begin{proof}
Write $\nabla_{\pm,\alpha}$ for the difference
operators $\nabla_\pm$ applied to the $\alpha$ variable,
so that, e.g.,~$\nabla_{+,\alpha}\psi_{\alpha-\beta}
=-\nabla_{-,\beta}\psi_{\alpha-\beta}$.
By the summation-by-parts formula (Lemma
\ref{lem:discrete_calculus}\ref{lem:discrete_calculus_differentiate-convolution}), 
\begin{align*}
	\bigl(\psi \dconv \bigl(\nabla_+ \cdot c\nabla_-a\bigr)\bigr)_\alpha
	&= \Dx^d\sum_{\beta \in \Z^d} \psi_{\alpha-\beta}
	\nabla_{+,\beta}\bigl(c\nabla_{-,\beta} a_\beta\bigr) \\
	&= \Dx^d\sum_{\beta \in \Z^d} \nabla_{+,\beta}
	\bigl(c\nabla_{-,\beta}\psi_{\alpha-\beta}\bigr) a_\beta \\
	&= \Dx^d\sum_{\beta \in \Z^d} \nabla_{-,\alpha}
	\bigl(c\nabla_{+,\alpha}\psi_{\alpha-\beta}\bigr) a_\beta \\
	&=  \nabla_{-,\alpha}\biggl(c\nabla_{+,\alpha}
	\biggl( \Dx^d\sum_{\beta \in \Z^d}\psi_{\alpha-\beta} a_\beta\biggr)\biggr)
	= \nabla_+ \cdot c\nabla_- \bigl(\psi \dconv a\bigr)_\alpha.
\end{align*}
Hence,
\begin{align*}
	\frac{\d}{\d t}  \phi_\alpha(t) &=
	\bigl(\psi \dconv \bigl(\nabla_+ \cdot c\nabla_-a(t)\bigr)\bigr)_\alpha
	+ \int_0^t \bigl(\nabla_+ \cdot c\nabla_- a(t - s)) \dconv f(s)\bigr)_\alpha \,\d s \\
	&\relspace + \bk{f(t) \dconv a(0)}_{\alpha} \\
	&= \nabla_+ \cdot c\nabla_- \biggl((\psi \dconv a(t))_\alpha
	+ \int_0^t (a(t - s) \dconv f(s))_\alpha \,\d s\biggr)
	+ f_{\alpha}(t) \\
	&= \nabla_+ \cdot c\nabla_-\phi_\alpha(t) + f_\alpha(t).
\end{align*}
Here, we used the linearity of the convolution.
To justify exchanging differentiation and convolution
above, we recall from Lemma \ref{lem:green_estimates}
\ref{lem:green_estimates_derivative-bound}
that $a_\alpha(t)$ and its differences
decay sufficiently quickly as $|\alpha| \to \infty$.
Indeed, by Young's convolution
inequality \eqref{eq:younginequality},
we have
\begin{align*}
\|\bm{a}\dconv\bm{f}\|_{L^\infty_t L^\infty_\alpha}
& \le \|\bm{a}\|_{L^{q'}_tL^{p'}_\alpha}\|\bm{f}\|_{L^q_tL^p_\alpha}.
\end{align*}
Using \eqref{eq:a_lp},
in order for $\norm{ \bm{a}}_{L^{q'}_tL^{p'}_\alpha}$ to be
bounded, we need
\begin{align*}
\biggl(-\frac{d}{2p}\biggr)q' > -1
\qquad &\Longleftrightarrow\qquad
{\frac2q+\frac{d}{p}<2}. \\
\intertext{A similar estimate on $\nabla_+\phi_\alpha(t)$ will require}
\biggl(-\frac{d}{2p}-\frac12\biggr)q' > -1
\qquad&\Longleftrightarrow\qquad
{\frac{2}{q}+\frac{d}{p} < 1}.
\end{align*}
The bounds \eqref{eq:non-homogeneous-bounds}
follow from the same kind of estimates.
\end{proof}

\subsection{The variable coefficient case}\label{sec:variable_coeff}
Let $\bm{f} \in L^\infty_\Dx(\R^d)$ and
$\bm{\psi}, \bm{c}^1, \ldots, \bm{c}^d \in L_\Dx^\infty(\R^d)$ be
given grid functions and define ${\bm c}= \text{diag}
(\bm{c}^1, \ldots, \bm{c}^d)$. Assume that $c_\alpha^j \ge \ep >  0$
for all $j,\alpha$, and that
\begin{align}\label{eq:smoothrho}
	\frac{\big|c_\alpha^j - c_\beta^j\big|}
	{\bigl|x_\alpha - x_\beta\bigr|}\le C,
	\quad j=1,\ldots d,
\end{align}
for a constant $C$ independent of $\alpha, \beta, j$.
We keep the convention that the superscript $j = 1, \ldots, d$ of $c_\alpha^j$
are indices and not exponents. We consider here the anisotropic
\emph{variable coefficient} heat equation
\begin{equation}\label{eq:heat_variable_coeff}
\begin{cases}\displaystyle
	\frac{\d}{\d t} \varphi_\alpha - \frac12 \sum_{j = 1}^d
	c_\alpha^j \nabla_+^j \nabla_-^j  \varphi_\alpha
	= f_\alpha, & \alpha\in\Z^d,\ t>0, \\
	\varphi_\alpha(0) = \psi_\alpha, & \alpha\in\Z^d.
\end{cases}
\end{equation}
The parametrix method
(see, e.g., \cite[Ch.~9]{Fri1964}, \cite[p.~56]{Taylor:2011aa}
in the continuous setting) is a technique for
solving heat equations with variable coefficients,
which we implement in a discrete form here.
We do so largely by using
bounds proven in Section
\ref{sec:constant_coeff}, via freezing
the argument $\alpha$ of the
coefficient $c_\alpha^j$
at some fixed index $\beta$.
This gives us a constant coefficient
heat operator as
long as ${\bm c}$ is bounded
away from $0$, and a resultant first
approximation $a_{\alpha,\beta}$
to the fundamental solution of the
full variable coefficient operator.
In pursuit of this ``coefficient freezing''
argument of the parametrix method,
we first establish estimates for
a quantity characterising the error
made by freezing $\alpha$ in
the argument of ${c_\alpha^j}$.

We first establish a rudimentary well-posedness 
result for \eqref{eq:heat_variable_coeff}, 
which will be improved upon later on.
\begin{lemma}\label{lem:basic-existence}
Let $\bm{c}^1,\dots,\bm{c}^d,\bm{f} 
	\in L_\Dx^\infty(\R^d)$. 
Then there exists $\bm{\phi}\in L^\infty_{t,\loc} L^\infty_\alpha$ such that 
$\phi_\alpha(t)$ is $C^1$ in $t$, and such 
that \eqref{eq:heat_variable_coeff} is 
satisfied for every $\alpha\in\Z^d$, $t>0$. 
Moreover, $\bm\phi$ is unique in the class 
of functions that are Lipschitz in $t$ and 
satisfy \eqref{eq:heat_variable_coeff} 
almost everywhere.
\end{lemma}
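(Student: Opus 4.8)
The plan is to prove Lemma~\ref{lem:basic-existence} by recasting \eqref{eq:heat_variable_coeff} as an abstract linear ODE on the Banach space $L^\infty_\Dx(\R^d)$, exactly as was done for the constant-coefficient fundamental solution in Lemma~\ref{lem:green_estimates}. The key observation is that the spatial operator
\[
\bm{L}\bm{\phi}_\alpha \coloneqq \frac12\sum_{j=1}^d c_\alpha^j \nabla_+^j\nabla_-^j \phi_\alpha
= \frac1{2\Dx^2}\sum_{j=1}^d c_\alpha^j\bigl(\phi_{\alpha+e_j} - 2\phi_\alpha + \phi_{\alpha-e_j}\bigr)
\]
is, for \emph{fixed} $\Dx>0$, a \emph{bounded} linear operator on $L^\infty_\Dx(\R^d)$: since $\bm{c}^1,\dots,\bm{c}^d\in L^\infty_\Dx(\R^d)$, one has the crude bound $\|\bm{L}\bm{\phi}\|_{L^\infty}\le \frac{2d}{\Dx^2}\max_j\|\bm{c}^j\|_{L^\infty}\|\bm{\phi}\|_{L^\infty}$. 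Hence \eqref{eq:heat_variable_coeff} is the inhomogeneous linear ODE $\frac{\d}{\d t}\bm{\phi} = \bm{L}\bm{\phi} + \bm{f}$, $\bm{\phi}(0)=\bm{\psi}$ (here $\bm\psi$ may be taken to be $\bm 0$, or any bounded datum), on a Banach space with a bounded generator. First I would invoke the standard theory of linear ODEs in Banach spaces --- for instance via the uniformly convergent Picard/Neumann series or the exponential $e^{t\bm{L}}$, cf.~\cite[Theorem 1.2]{Paz1983} as already cited --- to produce a solution
\[
\bm{\phi}(t) = e^{t\bm{L}}\bm{\psi} + \int_0^t e^{(t-s)\bm{L}}\bm{f}(s)\,\d s,
\]
which is $C^1$ in $t$ with values in $L^\infty_\Dx(\R^d)$ and solves \eqref{eq:heat_variable_coeff} pointwise for every $\alpha$ and every $t>0$.

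Next I would record the quantitative bounds needed for membership in $L^\infty_t L^\infty_\alpha$. Since $\|e^{t\bm{L}}\|_{L^\infty\to L^\infty}\le e^{t\|\bm{L}\|}$, Gr\"onwall (or direct estimation of the series) gives $\|\bm{\phi}(t)\|_{L^\infty_\alpha}\le e^{t\|\bm{L}\|}\bigl(\|\bm{\psi}\|_{L^\infty} + t\|\bm{f}\|_{L^\infty_{t,\alpha}}\bigr)$, which is finite and bounded on $[0,T]$, establishing $\bm{\phi}\in L^\infty_t L^\infty_\alpha$. The $C^1$ regularity in $t$ is immediate from the ODE itself: $\frac{\d}{\d t}\bm{\phi} = \bm{L}\bm{\phi} + \bm{f}$ has a right-hand side that is continuous in $t$ (continuity of $\bm\phi$ plus continuity of $\bm f$ if $\bm f$ is continuous; otherwise one only claims the integrated/almost-everywhere form), so $\bm{\phi}$ is continuously differentiable where $\bm{f}$ is continuous. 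For merely bounded $\bm f$ the map $t\mapsto\bm\phi(t)$ is Lipschitz, which is what the uniqueness class requires.

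For uniqueness I would take two solutions $\bm{\phi}_1,\bm{\phi}_2$ in the stated class (Lipschitz in $t$, solving \eqref{eq:heat_variable_coeff} almost everywhere) and set $\bm{v}\coloneqq\bm{\phi}_1-\bm{\phi}_2$. Then $\bm{v}$ is Lipschitz in $t$ with $\bm{v}(0)=\bm 0$ and satisfies $\frac{\d}{\d t}\bm{v} = \bm{L}\bm{v}$ for a.e.\ $t$, so $\frac{\d}{\d t}\|\bm{v}(t)\|_{L^\infty_\alpha}\le \|\bm{L}\|\,\|\bm{v}(t)\|_{L^\infty_\alpha}$ and Gr\"onwall forces $\bm{v}\equiv 0$. (Alternatively one multiplies by $v_\alpha\Dx^d$, sums over $\alpha$ --- summability being guaranteed once one works in an $L^2$ setting or truncates --- and uses summation-by-parts with $c_\alpha^j\ge\ep>0$ to get an energy inequality; but the $L^\infty$ Gr\"onwall argument is cleaner here and avoids summability concerns.)

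The only genuine subtlety --- and the step I expect to require the most care --- is that all constants in these bounds blow up as $\Dx\to0$, since $\|\bm{L}\|\sim\Dx^{-2}$. This is harmless for the present lemma, which asserts existence and uniqueness for \emph{fixed} $\Dx$ and explicitly flags that the result ``will be improved upon later'': the whole point of the subsequent parametrix construction in Section~\ref{sec:greens_functions} is to obtain $\Dx$-uniform bounds that this crude argument cannot. Thus the proof strategy is deliberately coarse --- I would not attempt any $\Dx$-uniformity here --- and the main thing to get right is simply the correct functional-analytic framing (bounded generator on $L^\infty_\Dx(\R^d)$) together with a careful statement of the regularity class in which uniqueness holds, matching the ``Lipschitz in $t$, a.e.\ solution'' hypothesis so that the Gr\"onwall argument applies verbatim.
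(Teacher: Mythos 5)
Your proposal is correct and follows essentially the same route as the paper: both recast \eqref{eq:heat_variable_coeff} as an abstract ODE $\frac{\d}{\d t}\bm\phi = A\bm\phi + \bm f$ with $A$ a bounded linear operator on $L^\infty_\Dx(\R^d)$ (norm $\lesssim \|\bm c\|_{L^\infty}/\Dx^2$), obtain existence and uniqueness from standard Banach-space ODE theory (Picard iteration/exponential series), and note that $C^1$ regularity follows by bootstrapping since in this lemma $\bm f$ is time-independent, hence continuous. The only cosmetic difference is that the paper cites a fixed-point theorem rather than writing out the Duhamel formula and Gr\"onwall estimate explicitly.
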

\begin{proof}
Write \eqref{eq:heat_variable_coeff} as 
$\frac{\d}{\d t}\bm\phi(t) = A(\bm{\phi}(t)) + \bm{f}$. 
Then $A$ is a bounded linear operator on 
$L^\infty_\alpha$ (with operator norm 
$\lesssim \|\bm{c}\|_{L^\infty}/\Dx^2$), 
so by a standard fixed point argument 
there exists a unique integral solution 
of the ODE, and which is continuously differentiable in $t$ (see e.g.~\cite[Theorem 7.3]{Brezis2011}). 

\end{proof}

For ${\bm c}\in L^\infty_\Dx(\R^d)$ as
in \eqref{eq:smoothrho}, we define
$\bigl(a_{\alpha, \beta}\bigr(t))_{\alpha,\beta\in\Z^d}$
as follows: For fixed $\beta\in\Z^d$,
the map $(t,\alpha)\mapsto a_{\alpha,\beta}(t)$
is the fundamental solution of
\[
\frac{\d}{\d t}  \varphi_\alpha
- \frac12\sum_{j = 1}^d c_\beta^j  \nabla_+^j  \nabla_-^j
\varphi_\alpha = 0,
\quad (t,\alpha)\in (0,T]\times \Z^d.
\]
Using the theory from Section \ref{sec:constant_coeff},
we can write
\begin{equation}\label{eq:fund_soln_parametrix}
a_{\alpha,\beta}(t) \coloneqq \sum_{n \ge 0}
\frac{t^n}{n!} \Biggl(\sum_{j = 1}^d
c_\beta^j \nabla_+^j \nabla_-^j \Biggr)^n{\delta}_\alpha.
\end{equation}

\begin{lemma}[Auxiliary estimates]
\label{thm:K_estimates_lemma}
Fix $\beta \in \Z^d$.
Define the auxiliary kernel
\begin{align}\label{eq:K_defin}
	K_{\alpha, \beta}(t )
	&\coloneqq \frac12 \sum_{j = 1}^d \bigl(c_\alpha^j
	-c_\beta^j\bigr)
	\nabla_+^j \nabla_-^j{\, a_{\alpha -\beta, \beta}(t )}
\end{align}
where the difference operators $\nabla_\pm$
act only on the $\alpha$ subscript. Then 
\begin{equation}\label{eq:K_estimate}
\begin{aligned}
|K_{\alpha, \beta}(t)|
	&\lesssim t^{-\bk{d + 1}/2} \prod_{j = 1}^d
	\bk{1 + \frac{|x_\alpha^j- x_\beta^j|^2}{ 4\overline{c}t}}^{-1},
\end{aligned}
\end{equation}
where  $\overline{c} = \norm{\bm c}_{L^\infty(\R^d)}$.
Moreover, for $p > d$, $p' = p/\bk{p - 1}$, 
\begin{align*}
\|\bm{K}(t)\|_{L^\infty_\alpha L^{p'}_\beta}
& \lesssim_{p} t^{-d/2p - 1/2},
\end{align*}
and for a constant $C_p>0$ independent of $\Dx$,
\begin{align}\label{eq:K_L1_tx}
\|\bm{K}\|_{L^1_t L^\infty_\alpha L^{p'}_\beta}
	\le C_p T^{-d/2p+1/2}.
\end{align}
\end{lemma}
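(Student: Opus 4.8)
The plan is to establish the pointwise bound \eqref{eq:K_estimate} first and then to read off the two integral bounds from it --- except on coarse grids, where \eqref{eq:K_estimate} turns out to be too lossy and I will instead return to the exact Bessel representation \eqref{eq:fundamentalsolution}.

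For the pointwise bound I would start from \eqref{eq:K_defin} and the triangle inequality, $|K_{\alpha,\beta}(t)|\le\frac12\sum_{j}|c_\alpha^j-c_\beta^j|\,|\nabla_+^j\nabla_-^j a_{\alpha-\beta,\beta}(t)|$, bound $|c_\alpha^j-c_\beta^j|\le C|x_\alpha-x_\beta|$ via \eqref{eq:smoothrho}, and control each frozen second difference by \eqref{eq:a_F_bound4} with $\overline{c}=\|\bm c\|_{L^\infty}$ (enlarging $\overline{c}$ only weakens the bound, and $\max_j c_\beta^j\le\overline c$ uniformly in $\beta$). It then remains to absorb the length factor: writing $|x_\alpha-x_\beta|\le\sum_m|x_\alpha^m-x_\beta^m|$ and using the elementary inequality $s(1+s^2)\le 1+s^2+s^3$ with $s=|x_\alpha^m-x_\beta^m|/\sqrt{4\overline{c}t}$, the $m$-th summand is absorbed into the cubic term of the $m$-th factor at the cost of one power of $\sqrt t$. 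This converts $t^{-(d+2)/2}$ into $t^{-(d+1)/2}$ and degrades the cubic decay to the quadratic decay of \eqref{eq:K_estimate}; this part is routine.

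The integral bounds are the heart of the matter, and the difficulty is that a direct $L^{p'}_\beta$ summation of \eqref{eq:K_estimate} fails to be integrable in $t$ at the origin: the one-dimensional sums $\Dx\sum_n(1+(n\Dx)^2/(4\overline{c}t))^{-p'}$ scale like $\sqrt t$ only while $\Dx\lesssim\sqrt t$ and saturate at $\Dx$ once $\Dx\gtrsim\sqrt t$, so \eqref{eq:K_estimate} alone over-counts the diagonal on coarse grids. I would therefore split into two regimes. For $t\gtrsim\Dx^2$, comparing each factor to $\int_{\R}(1+y^2)^{-p'}\,dy<\infty$ (finite since $p'>1$) gives $\lesssim\sqrt t$ per coordinate, whence $\|\bm K(t)\|_{L^\infty_\alpha L^{p'}_\beta}\lesssim t^{-(d+1)/2}(\sqrt t)^{d/p'}=t^{-d/(2p)-1/2}$, using $d/(2p')=d/2-d/(2p)$. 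For $t\lesssim\Dx^2$ I would abandon \eqref{eq:K_estimate} and use \eqref{eq:fundamentalsolution} directly: there $r^j=2c_\beta^jt/\Dx^2\le 2\overline{c}$ is bounded, so $e^{-r}I_n(r)\lesssim\overline{c}^{|n|}/|n|!$ decays factorially in $|n|$, giving $|\nabla_+^j\nabla_-^j a_{\gamma,\beta}(t)|\lesssim\Dx^{-d-2}w_\gamma$ with a summable weight $w_\gamma$. The decisive point is that $c_\alpha^j-c_\beta^j$ vanishes on the diagonal, so the factor $|x_\alpha-x_\beta|=|\gamma|\Dx$ multiplies a factorially small quantity; summing then yields $\|\bm K(t)\|_{L^\infty_\alpha L^{p'}_\beta}\lesssim\Dx^{-1-d/p}=(\Dx^2)^{-d/(2p)-1/2}\le t^{-d/(2p)-1/2}$ on this regime. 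Both pointwise bounds depend on the frozen coefficient only through $\overline{c}$, so the summation over $\beta$ presents no further difficulty. Combining the regimes gives the $L^\infty_\alpha L^{p'}_\beta$ estimate for every $t>0$, and \eqref{eq:K_L1_tx} follows by integrating: $\int_0^T t^{-d/(2p)-1/2}\,dt\lesssim T^{1/2-d/(2p)}$, the integral converging at $0$ precisely because $p>d$ forces the exponent above $-1$.

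I expect the genuine obstacle to be exactly this coarse-grid regime $\Dx\gtrsim\sqrt t$: the Gaussian bound \eqref{eq:K_estimate} is irreparably lossy there (even its time integral diverges for every $d\ge1$), so there is no avoiding a return to the exact Bessel asymptotics and the cancellation $K_{\beta,\beta}=0$. As a sanity check on why $\sqrt t$ is the correct gain from the weight $|x_\alpha-x_\beta|$, the exact symbol identity $x_\gamma^j a_\gamma(t)=-2c^jt\,\nabla_0^j a_\gamma(t)$, read off from $\widehat{a}$ in \eqref{eq:w_aux1}, shows that multiplication by $x_\gamma^j$ amounts to $t$ times a difference quotient, which through \eqref{eq:a_lp} trades one power of $t$ against a single difference costing $t^{-1/2}$, for a net gain of one half-power of $t$.
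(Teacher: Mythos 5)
Your proof is correct, and for the pointwise bound \eqref{eq:K_estimate} and the large-time part of the summation it coincides with the paper's argument: triangle inequality, the Lipschitz bound \eqref{eq:smoothrho}, the second-difference decay \eqref{eq:a_F_bound4}, absorption of the length factor into the cubic term at the cost of $t^{-1/2}$, and then a coordinatewise comparison of the $\beta$-sum with $\int_{\R}(1+y^2)^{-p'}\,\d y$. Where you genuinely depart from the paper is the coarse-grid regime $t\lesssim\Dx^2$, and here your instinct is vindicated: the paper's proof simply asserts $\sum_{\beta^j}\bigl(1+\tfrac{|x_\alpha^j-x_\beta^j|^2}{4\overline ct}\bigr)^{-p'}\tfrac{\Dx}{t^{1/2}}\lesssim 1$, which fails once $\Dx\gg\sqrt t$ (the $\beta^j=\alpha^j$ term alone contributes $\Dx/\sqrt t$), and since the $L^1_t$ bound \eqref{eq:K_L1_tx} integrates down to $t=0$ this regime cannot be avoided; tracking the mesh correction, the Gaussian-type bound \eqref{eq:K_estimate} alone would even yield a divergent time integral near the origin for every $d\geq1$. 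Your repair --- returning to the exact representation \eqref{eq:fundamentalsolution}, noting $r^j=2c_\beta^jt/\Dx^2\leq 2\overline c$ there so that $e^{-r}I_n(r)\lesssim \overline{c}^{|n|}/|n|!$ decays factorially, and exploiting that the Lipschitz factor $|x_\alpha-x_\beta|=|\alpha-\beta|\Dx$ multiplies something factorially small (and vanishes on the diagonal) --- correctly produces $\|\bm K(t)\|_{L^\infty_\alpha L^{p'}_\beta}\lesssim\Dx^{-1-d/p}\leq t^{-d/2p-1/2}$ on $t\leq\Dx^2$, uniformly in the frozen index $\beta$. So your argument is not merely an alternative route but a completion of the paper's proof; the trade-off is only that it requires the explicit Bessel formula rather than the derived pointwise estimate. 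The closing symbol identity $x_\gamma^ja_\gamma(t)=-2c^jt\,\nabla_0^ja_\gamma(t)$ checks out against \eqref{eq:w_aux1} and is a nice heuristic for the half-power gain, though it is not needed for the proof.
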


\begin{proof}
By Lemma \ref{lem:green_estimates}
\ref{lem:green_estimates_derivative-bound},
and in particular \eqref{eq:pointwise_da} and \eqref{eq:a_F_bound4},
with $\overline{c} := \sup_{j,\alpha} c^j_\alpha$,
\begin{equation*}
\begin{aligned}
	|K_{\alpha, \beta}(t)|
	& = \frac12 \Biggl|\sum_{j = 1}^d\bk{c_\alpha^j
	- c_\beta^j}
	\nabla_+^j \nabla_-^j a_{\alpha - \beta, \beta}\Biggr|\\
	& \le  \frac{|x_\alpha - x_\beta|}{\sqrt{t}} \sqrt{t}
	\max_j \frac{|c_\alpha^j
	- c_\beta^j|}{|x_\alpha - x_\beta|}
	\sum_{j = 1}^d \Big| \nabla_+^j \nabla_-^j
	a_{\alpha - \beta, \beta}\Big|\\
	&\lesssim  \frac{|x_\alpha - x_\beta|}{\sqrt{t}} 
 t^{- \bk{d + 1}/2} \prod_{j = 1}^d
		\bk{1 + \frac{|x_\alpha^j- x_\beta^j|^2}{4 \overline{c}t}
	+ \frac{|x_\alpha^j- x_\beta^j|^3}{\abs{4 \overline{c}t}^\thf} }^{-1}.
\end{aligned}
\end{equation*}
For any $x,y\in\R^d$ and $t>0$ we have
\[
\frac{|x - y|}{\sqrt{t}}
 \prod_{j=1}^d \bk{1 + \frac{|x^j - y^j|^2}{4 \overline{c}t}
	+ \frac{|x^j-y^j|^3}{\abs{4 \overline{c}t}^\thf} }^{-1}
\lesssim \prod_{j=1}^d \bk{1 + \frac{|x^j -y^j|^2}{4 \overline{c}t}  }^{-1}.
\]
The above estimate on $K_{\alpha, \beta}(t)$
therefore reduces to \eqref{eq:K_estimate}.

Next we show that $K_{\alpha, \beta}(t)$ is
bounded in $L^1_t L^\infty_\alpha L^{p'}_\beta$.
From \eqref{eq:K_estimate},
\begin{align*}
\sum_{\beta \in \Z^d}\abs{K_{\alpha, \beta}(t)}^{p'}  \Dx^d
& \le t^{-\bk{d + 1}{p'}/2 + d/2} \prod_{j = 1}^d 
	\Biggl( \sum_{\beta^j \in \Z} \bk{1 
	+ \frac{|x_\alpha^j - x_\beta^j|^2}
		{4 \overline{c} t}}^{-p'} \frac{\Dx}{t^{\hf}}\Biggr)\\
& \lesssim_{\overline{c},d,p} t^{-\bk{d + 1}{p'}/2 + d/2}.
\end{align*}
Therefore,
\begin{align*}
\|\bm{K}\|_{L^1_t L^\infty_\alpha L^{p'}_\beta} &=
\int_0^T \sup_{\alpha \in \Z^d}
    \norm{K_{\alpha, \cdot}(t)}_{L^{p'}_\beta}\,\d t
\lesssim_{\overline{c},d,p} \int_0^T
    t^{-\bk{d + 1}/2 + d/2p'}\,\d t\\
&= \int_0^T
    t^{-d/2p - 1/2}\,\d t
\lesssim T^{1/2 -d/2p}.\qedhere
\end{align*}
\end{proof}

\begin{lemma}\label{lem:auxcalc3}
Let $K^{(1)} \coloneqq K$, and for $m = 2,3,\ldots$, 
let $\bm{K}^{(m)} \coloneqq \bm{K}\dconv \bm{K}^{(m-1)}$, i.e.
\begin{equation}\label{eq:Km_defin}
	K^{(m)}_{\alpha, \beta}(t)
	\coloneqq \int_0^t
	\sum_{\eta \in \Z^d} K_{\alpha, \eta}(t - s)
	 K^{(m - 1)}_{\eta, \beta}(s)\Dx^d\,\d s.
\end{equation}
Then there is a $T_0>0$ such that for each $p > d$, the series
\begin{equation}\label{eq:Phi_defin}
	\bm{\Phi} \coloneqq \sum_{m=1}^\infty \bm{K}^{(m)}
\end{equation}
converges absolutely in
$L^1([0,{T_0}])L^\infty_\alpha L^{p'}_\beta$. 
For all $T \in [0,T_0]$, ${\bm \Phi}$ satisfies
\begin{align}\label{eq:Phi_L1Lp}
\|\bm{\Phi}\|_{L^1_t([0,T]) L^\infty_\alpha L^{p'}_\beta}
\lesssim_{c,p} T^{-d/2p+1/2}.
\end{align}
\end{lemma}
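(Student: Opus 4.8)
The plan is to recognise the series \eqref{eq:Phi_defin} as a Neumann (Volterra) series for the convolution operator $\bm{g}\mapsto \bm{K}\dconv\bm{g}$, and to show that this operator is a strict contraction on $L^1_t([0,T_0])L^\infty_\alpha L^{p'}_\beta$ once $T_0$ is chosen small. Geometric summability of the iterates then follows immediately, together with the bound \eqref{eq:Phi_L1Lp}.

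The first step is to record an auxiliary estimate in the $L^1_\beta$ scale. Exactly as in the proof of Lemma \ref{thm:K_estimates_lemma}, the pointwise bound \eqref{eq:K_estimate} gives, after comparing the sum over $\beta$ to an integral uniformly in $\Dx$ (each one-dimensional factor obeys $\sum_{\beta^j}(1+|x_\alpha^j-x_\beta^j|^2/4\overline{c}t)^{-1}\Dx\lesssim t^{1/2}$),
\[
\|\bm{K}(t)\|_{L^\infty_\alpha L^1_\beta}\lesssim t^{-1/2},
\qquad\text{hence}\qquad
\|\bm{K}\|_{L^1_t([0,T])L^\infty_\alpha L^1_\beta}\lesssim T^{1/2}.
\]
This is the $p'=1$ analogue of the estimate already recorded in Lemma \ref{thm:K_estimates_lemma}, and it is the norm in which the contracted (middle) index of $\bm{K}$ must be measured in order to close the convolution.

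The second step is to feed this into Young's inequality. Applying Lemma \ref{lem:young}(ii) with the exponents $p_1=q_1=r_1=1$, $p_2=q_2=\infty$, $p_3=1$ and $q_3=p'$ — for which the two admissibility conditions $\tfrac1{p_3}+\tfrac1{q_2}=1$ and $\tfrac1{p_1}+\tfrac1{q_1}=1+\tfrac1{r_1}$ both hold — will yield, for any $\bm{g}\in L^1_t([0,T])L^\infty_\alpha L^{p'}_\beta$,
\[
\|\bm{K}\dconv\bm{g}\|_{L^1_t([0,T])L^\infty_\alpha L^{p'}_\beta}
\le \|\bm{K}\|_{L^1_t([0,T])L^\infty_\alpha L^1_\beta}\,
\|\bm{g}\|_{L^1_t([0,T])L^\infty_\alpha L^{p'}_\beta}
\le C\,T^{1/2}\,\|\bm{g}\|_{L^1_t([0,T])L^\infty_\alpha L^{p'}_\beta}.
\]
Here the $L^1_\beta$ norm of $\bm{K}$ is paired, via Hölder with exponents $1$ and $\infty$ on the contracted index, against the $L^\infty$ first index of $\bm{g}$, leaving the output in $L^{p'}_\beta$; this is precisely why the $L^1_\beta$ estimate of the first step is needed rather than the $L^{p'}_\beta$ bound \eqref{eq:K_L1_tx}.

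Finally, I would choose $T_0>0$ so small that $C\,T_0^{1/2}\le\tfrac12$. Then, by induction on $m$ using $\bm{K}^{(m)}=\bm{K}\dconv\bm{K}^{(m-1)}$, the base case $\bm{K}^{(1)}=\bm{K}\in L^1_t([0,T_0])L^\infty_\alpha L^{p'}_\beta$ (by \eqref{eq:K_L1_tx}), and the contraction estimate above,
\[
\|\bm{K}^{(m)}\|_{L^1_t([0,T_0])L^\infty_\alpha L^{p'}_\beta}
\le \bigl(C\,T_0^{1/2}\bigr)^{m-1}\,
\|\bm{K}\|_{L^1_t([0,T_0])L^\infty_\alpha L^{p'}_\beta},
\]
so the geometric series $\sum_m\|\bm{K}^{(m)}\|$ converges and $\bm{\Phi}$ is well defined as an absolutely convergent series in $L^1_t([0,T_0])L^\infty_\alpha L^{p'}_\beta$. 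Summing the geometric series for any $T\le T_0$ and invoking \eqref{eq:K_L1_tx} for the $m=1$ term gives
\[
\|\bm{\Phi}\|_{L^1_t([0,T])L^\infty_\alpha L^{p'}_\beta}
\le \frac{\|\bm{K}\|_{L^1_t([0,T])L^\infty_\alpha L^{p'}_\beta}}{1-C\,T^{1/2}}
\lesssim_{c,p} T^{-d/2p+1/2},
\]
which is \eqref{eq:Phi_L1Lp}. The main obstacle is the mixed-norm bookkeeping in the second step: one must realise that the contracted spatial index of the kernel $\bm{K}$ has to be controlled in $L^1_\beta$ (and not in $L^{p'}_\beta$) to match the $L^\infty$ first index of the iterate, and then verify that the associated kernel bound $\|\bm{K}(t)\|_{L^\infty_\alpha L^1_\beta}\lesssim t^{-1/2}$ holds uniformly in $\Dx$ and produces an operator norm $C\,T^{1/2}$ that is genuinely small for small time.
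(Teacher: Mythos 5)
Your proposal is correct and follows essentially the same route as the paper: the key step in both is the Young-type inequality $\|\bm{K}\dconv\bm{g}\|_{L^1_t L^\infty_\alpha L^{p'}_\beta}\le \|\bm{K}\|_{L^1_t L^\infty_\alpha L^1_\beta}\|\bm{g}\|_{L^1_t L^\infty_\alpha L^{p'}_\beta}$ from Lemma \ref{lem:young}(ii), combined with the bound $\|\bm{K}\|_{L^1_t L^\infty_\alpha L^1_\beta}\lesssim T^{1/2}$ (the $p=\infty$ case of \eqref{eq:K_L1_tx}, which you rederive explicitly), an induction on $m$, and geometric summation after choosing $T_0$ small. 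The only cosmetic difference is that you phrase the iteration as a Neumann series for a contraction, while the paper phrases it as induction plus the Weierstrass M-test.
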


\begin{proof}
By Young's convolution inequality (Lemma \ref{lem:young}), we find that
\[
\|\bm{K}^{(m)}\|_{L^1_t L^\infty_\alpha L^{p'}_\beta} \leq \|\bm{K}\|_{L^1_t L^\infty_\alpha L^1_\beta} \|\bm{K}^{(m-1)}\|_{L^1_t L^\infty_\alpha L^{p'}_\beta}
\]
By induction on $m$ and using \eqref{eq:K_L1_tx} we arrive at
\[
\|\bm{K}^{(m)}\|_{L^1_t L^\infty_\alpha L^{p'}_\beta}
\leq \|\bm{K}\|_{L^1_t L^\infty_\alpha L^1_\beta}^{m-1} \|\bm{K}\|_{L^1_t L^\infty_\alpha L^{p'}_\beta}
\leq \bigl(C_\infty T^{1/2}\bigr)^{m-1} C_p T^{-d/2p+1/2}.
\]
Summing over $m$ and using Weierstrass' M-test, we find that the series \eqref{eq:Phi_defin} converges in $L^1_t L^\infty_\alpha L^{p'}_\beta$ as long as, say, $T \leq T_0 \coloneqq \frac12 C_\infty^{-2}.$ In particular, we can bound
\begin{align*}
\|\Phi\|_{L^1([0,T])L^\infty_\alpha L^{p'}_\beta}
&\leq \sum_{m=1}^\infty \bigl(C_\infty T^{1/2}\bigr)^{m-1} C_p T^{-d/2p+1/2} \\
&= \frac{C_p T^{-d/2p+1/2}}{1 - C_\infty T^{1/2}} \lesssim T^{-d/2p+1/2}.\qedhere
\end{align*}
\end{proof}

\begin{definition}[Fundamental solution]
\label{defin:fundsol_varicoeff}
We say that $\bm{\Gamma}
=\bigl(\Gamma_{\alpha, \beta}(t)\bigr)_{\substack{\alpha, \beta \in \Z^d,
\\ 0 \leq t\leq T}}$
is a {fundamental solution} of \eqref{eq:heat_variable_coeff} on $[0,T]$ if
\begin{equation}\label{eq:fund_soln_variable_heat}
\begin{cases}
	\frac{\d}{\d t}  \Gamma_{\alpha, \beta}(t)
	= \frac12 \sum_{j = 1}^dc_\alpha^j
	\nabla_+^j \nabla_-^j \Gamma_{\alpha, \beta}(t),
	& \alpha,\beta\in\Z^d,\ 0< t\leq T,
	\\ {\Gamma}_{\alpha,\beta}(0)
	= \delta_{\alpha-\beta}, & \alpha,\beta\in\Z^d.
\end{cases}
\end{equation}
\end{definition}

The next result establishes the existence
and regularity of a fundamental solution.

\begin{lemma}[Solution formula]\label{lem:full_greensfunction}
Let ${T_0}$ be as given in Lemma
\ref{lem:auxcalc3}. Then the semi-discrete 
variable coefficient parabolic problem 
\eqref{eq:heat_variable_coeff} has a 
fundamental solution $\bm{\Gamma}$ on $[0,T_0]$ of the form
\begin{align}\label{eq:Gamma_representation}
\Gamma_{\alpha, \beta}(t)
	= a_{\alpha - \beta, \beta}(t)
	+ \int_0^{t} \sum_{\eta \in \Z^d} a_{\alpha - \eta, \eta}(t - s)
	\Phi_{\eta, \beta}(s)\, \Dx^d \,\d s, \qquad \forall\ t\in[0,T_0]
\end{align}
where $\bm{\Phi}=
    \bigl(\Phi_{\alpha, \beta}(t)\bigr)_{\alpha, \beta \in \Z^d,\,t\geq0}$
is defined in \eqref{eq:Phi_defin}. For $p \in (d,\infty]$, we have
\begin{equation}\label{eq:Gamma_bound}
    \bigl\|\bm{\Gamma}(t)\bigr\|_{L^\infty_\alpha L^{p'}_\beta} 
    \lesssim_p  t^{-d/(2p)} \qquad \forall\ t\in(0,T_0],
\end{equation}
where $p' = p/(p - 1)$ is the H\"older conjugate of $p$.
\end{lemma}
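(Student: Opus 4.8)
\emph{Strategy.} The plan is to carry out the parametrix (Levi) method in the discrete setting, using the frozen-coefficient kernels $a_{\alpha-\beta,\beta}$ as a first approximation and correcting them through a Duhamel term. For each fixed $\Dx$ and each $\beta$, Lemma \ref{lem:basic-existence} (applied with $\bm f\equiv0$ and initial datum $\delta_{\cdot-\beta}\in L^\infty_\Dx(\R^d)$) already supplies a unique fundamental solution $\Gamma_{\cdot,\beta}$ of \eqref{eq:fund_soln_variable_heat} that is $C^1$ in $t$; the whole task is thus to show that this $\bm\Gamma$ admits the representation \eqref{eq:Gamma_representation} and obeys the $\Dx$-uniform bound \eqref{eq:Gamma_bound}. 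I would therefore \emph{define} $\widetilde\Gamma_{\alpha,\beta}(t):=a_{\alpha-\beta,\beta}(t)+(\bm a\dconv\bm\Phi)_{\alpha,\beta}(t)$, the right-hand side of \eqref{eq:Gamma_representation}, with $\bm\Phi=\sum_{m\ge1}\bm K^{(m)}$ from \eqref{eq:Phi_defin}, verify that $\widetilde{\bm\Gamma}$ solves \eqref{eq:fund_soln_variable_heat}, and conclude $\widetilde{\bm\Gamma}=\bm\Gamma$ by the uniqueness in Lemma \ref{lem:basic-existence}. Note that the kernel in the Duhamel term is $a_{\alpha-\eta,\eta}$, with the coefficient frozen at the \emph{running} convolution index $\eta$; the series for $\bm\Phi$ converges in $L^1_tL^\infty_\alpha L^{p'}_\beta$ by Lemma \ref{lem:auxcalc3}, and the space-time convolution is finite thanks to the rapid decay \eqref{eq:pointwise_da} of the kernels.

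\emph{$\widetilde{\bm\Gamma}$ is a fundamental solution.} Writing $\mathcal L_\alpha:=\frac{\d}{\d t}-\frac12\sum_{j=1}^d c_\alpha^j\nabla_+^j\nabla_-^j$, the structural input is the coefficient-freezing identity $\mathcal L_\alpha a_{\alpha-\beta,\beta}(t)=-K_{\alpha,\beta}(t)$, immediate from \eqref{eq:fund_soln_parametrix} and the definition \eqref{eq:K_defin} of $\bm K$. For the Duhamel term I would differentiate in $t$ by the Leibniz rule for a variable upper limit: since $a_{\cdot,\eta}(0)=\delta$, the boundary contribution at $s=t$ equals $\sum_\eta a_{\alpha-\eta,\eta}(0)\Phi_{\eta,\beta}(t)\Dx^d=\Phi_{\alpha,\beta}(t)$, while differentiating the integrand and using that $a_{\cdot,\eta}$ solves the equation frozen at $\eta$ turns the leftover coefficient mismatch into $-(\bm K\dconv\bm\Phi)_{\alpha,\beta}$. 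Collecting the two pieces gives $\mathcal L_\alpha\widetilde\Gamma_{\alpha,\beta}=-K_{\alpha,\beta}+\Phi_{\alpha,\beta}-(\bm K\dconv\bm\Phi)_{\alpha,\beta}$, which vanishes precisely because $\bm\Phi=\sum_{m\ge1}\bm K^{(m)}$ solves the Volterra equation $\bm\Phi=\bm K+\bm K\dconv\bm\Phi$ (a telescoping consequence of the definition \eqref{eq:Km_defin}). The initial condition $\widetilde\Gamma_{\alpha,\beta}(0)=\delta_{\alpha-\beta}$ holds because the Duhamel term vanishes at $t=0$. For fixed $\Dx$ all kernels are smooth and bounded in $t$ (the operators are bounded, so there is no genuine time singularity at $t=0$), and the spatial differences $\nabla_+^j\nabla_-^j a_{\alpha-\eta,\eta}$ decay rapidly in $|x_\alpha-x_\eta|$ by \eqref{eq:a_F_bound4}; dominated convergence then licenses each interchange of $\tfrac{\d}{\d t}$, $\nabla_\pm^j$, $\sum_\eta$ and $\int_0^t$.

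\emph{The bound \eqref{eq:Gamma_bound}.} I would estimate the two terms of \eqref{eq:Gamma_representation} separately. Summing the pointwise bound \eqref{eq:pointwise_da} over the second index $\beta$---legitimate since \eqref{eq:pointwise_da} depends only on $|x_\alpha-x_\beta|$ and, monotonically, on the uniform constant $\overline c=\|\bm c\|_{L^\infty}$---gives $\|a_{\cdot-\beta,\beta}(t)\|_{L^\infty_\alpha L^{p'}_\beta}\lesssim_p t^{-d/(2p)}$, exactly as in the derivation of \eqref{eq:a_lp}. For the correction term, positivity of $\bm a$ (Lemma \ref{lem:green_estimates}\ref{lem:green_estimates_positivity}) and Minkowski's inequality in $L^{p'}_\beta$ yield
\[
\bigl\|(\bm a\dconv\bm\Phi)_{\alpha,\cdot}(t)\bigr\|_{L^{p'}_\beta}\le\int_0^t\Bigl(\sum_\eta a_{\alpha-\eta,\eta}(t-s)\Dx^d\Bigr)\,\|\bm\Phi(s)\|_{L^\infty_\eta L^{p'}_\beta}\,\d s;
\]
since $\sum_\eta a_{\alpha-\eta,\eta}(\tau)\Dx^d\lesssim1$ uniformly (the $p'=1$ instance of the same summation argument), this is $\lesssim\int_0^t\|\bm\Phi(s)\|_{L^\infty_\eta L^{p'}_\beta}\,\d s\lesssim t^{-d/(2p)+1/2}$ by \eqref{eq:Phi_L1Lp}. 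As $t\le T_0$, the correction term is thus $\lesssim T_0^{1/2}\,t^{-d/(2p)}$, i.e.~strictly subdominant, and \eqref{eq:Gamma_bound} follows.

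\emph{Main obstacle.} Unlike the continuous parametrix method---where the analytic difficulty lies in taming the time singularities of the kernels---for fixed $\Dx$ these singularities are absent, so the delicate points are instead bookkeeping and $\Dx$-uniformity. The key subtlety is that the Duhamel kernel must be frozen at the \emph{running} index $\eta$ (not at $\beta$): this is exactly what makes the Leibniz boundary term reproduce $\Phi_{\alpha,\beta}$ and forces the residual to close up as the \emph{same} Volterra equation $\bm\Phi=\bm K+\bm K\dconv\bm\Phi$ whose solution $\sum_m\bm K^{(m)}$ was already controlled in Lemma \ref{lem:auxcalc3}. The remaining care is to keep the bound \eqref{eq:Gamma_bound} independent of $\Dx$, which---via the estimates of Lemmas \ref{lem:green_estimates}, \ref{thm:K_estimates_lemma} and \ref{lem:auxcalc3}---reduces to the subdominance of the correction term established above.
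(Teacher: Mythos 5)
Your proposal is correct and follows essentially the same route as the paper: the parametrix ansatz \eqref{eq:Gamma_representation} is verified to solve \eqref{eq:fund_soln_variable_heat} by reducing to the Volterra equation $\bm\Phi=\bm K+\bm K\dconv\bm\Phi$ (which the series \eqref{eq:Phi_defin} solves), and the bound \eqref{eq:Gamma_bound} is obtained by splitting off the frozen-coefficient term and controlling the correction via the $L^1_\beta$ mass bound on $\bm a$ together with \eqref{eq:Phi_L1Lp}. The only cosmetic differences are your explicit appeal to Lemma \ref{lem:basic-existence} to identify the constructed kernel with the unique fundamental solution, and the use of Minkowski's inequality in place of the paper's direct application of Lemma \ref{lem:young}.
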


\begin{proof}
As in \eqref{eq:fund_soln_parametrix}, we
let $\bm{a}=a_{\alpha,\beta}(t)$ denote the fundamental
solution of the constant-coefficient heat
equation \eqref{eq:discrete_heat_const}
with coefficient $c\equiv c_\beta$.
Following the approach in the continuous
setting in \cite[Ch. 9, \S 4]{Fri1964},
we seek to construct the fundamental
solution $\bm{\Gamma}$ in the
form \eqref{eq:Gamma_representation}.
We split the remainder of the
proof into two steps---derivation
of $\bm{\Phi}$ and estimates on $\bm{\Gamma}$.

\medskip

\noindent \textit{Step 1: Derivation of $\bm{\Phi}$.}
Let $[0,{{T_0}}]$ be a time interval on which the series 
for $\Phi$ converges; see Lemma \ref{thm:K_estimates_lemma}.
Taking the temporal derivative of
\eqref{eq:Gamma_representation} for $\bm{\Gamma}$, we get
\begin{align*}
	\frac{\d}{\d t}  \Gamma_{\alpha, \beta}(t)
	&= \frac{\d}{\d t}  a_{\alpha - \beta, \beta}(t)
	 + \sum_{\eta \in \Z^d} a_{\alpha - \eta, \eta}(0)
	 \Phi_{\eta, \beta}(t) \Dx^d \\*
	&\relspace + \int_0^t \sum_{\eta \in \Z^d} \frac{\d}{\d t}
	a_{\alpha - \eta, \eta}(t - s)
	\Phi_{\eta, \beta}(s) \Dx^d \,\d s \\
	& =\frac12\sum_{j = 1}^d c_\beta^j \nabla_+^j
	\nabla_-^j \,a_{\alpha -\beta, \beta}(t)
	+ \Phi_{\alpha, \beta}(t)\\*
	&\relspace + \frac12\int_0^t \sum_{\eta \in \Z^d}
	\sum_{j = 1}^d c_\eta^j \nabla_+^j
	\nabla_-^j \,a_{\alpha -\eta, \eta}(t - s)
	\Phi_{\eta, \beta}(s) \Dx^d \,\d s,
\end{align*}
and similarly for the difference,
\begin{align*}
	\nqquad\frac12 \sum_{j = 1}^d c_\alpha^j \nabla_+^j
	\nabla_-^j \Gamma_{\alpha, \beta}(t)
    &=   \frac12\sum_{j = 1}^d c_\alpha^j \nabla_+^j
	\nabla_-^j  a_{\alpha - \beta, \beta}(t)\\*
	&\relspace + \frac12  \int_0^t \sum_{\eta \in \Z^d}
	\sum_{j = 1}^dc_\alpha^j \nabla_+^j \nabla_-^j
	a_{\alpha - \eta, \eta}(t - s)\Phi_{\eta, \beta}(s) \Dx^d\,\d s.
\end{align*}
Therefore, $\bm{\Gamma}$ is a fundamental
solution to \eqref{eq:heat_variable_coeff}
if and only if
\begin{align}\label{eq:fundmat_sol}
	\Phi_{\alpha, \beta}(t)
	= K_{\alpha, \beta}(t)
	+ \int_0^t \sum_{\eta \in \Z^d} K_{\alpha, \eta}(t - s)
	\Phi_{\eta, \beta}(s) \Dx^d \,\d s,
\end{align}
where  $K_{\alpha, \beta}$ is defined by \eqref{eq:K_defin}.
The series expansion \eqref{eq:Phi_defin} of $\bm\Phi$
provides a formal solution to
\eqref{eq:fundmat_sol}, as
\begin{align*}
	&\nqquad K_{\alpha, \beta}(t)
	+ \int_0^t \sum_{\eta \in \Z^d} K_{\alpha, \eta}(t - s)
	\Phi_{\eta, \beta}(s) \Dx^d \,\d s\\
	& = K_{\alpha, \beta}(t)
	+ \int_0^t \sum_{\eta \in \Z^d} K_{\alpha, \eta}(t - s)
	\sum_{m= 1}^\infty K^{(m)}_{\eta, \beta}(t) \,\d s\\
	& = K_{\alpha, \beta}(t)
	+\sum_{m= 1}^\infty K^{(m + 1)}_{\alpha, \beta}(t)
	 = \Phi_{\alpha, \beta}(t).
\end{align*}
In the calculation above,
in order to exchange the sums
over $m$ with the sum over $\eta$,
and then exchange the sum over $m$
with the integral, we require
the uniform convergence of the series
$\Phi_{\alpha, \beta}(t)$ with respect to $\alpha, \beta \in \Z^d$
and in $L^1([0,T_0])$.
This is provided by the final
assertion of Lemma \ref{lem:auxcalc3}.

\medskip

\noindent \textit{Step 2: Estimates on $\bm{\Gamma}$.}
Let us denote $F_{\alpha,\beta}(t) = a_{\alpha-\beta,\beta}(t)$, so that we can write
\[
\bm{\Gamma} = \bm{F} + \bm{F}\dconv \bm{\Phi}.
\]
Using the uniform
upper boundedness of $\bm{c}$, set
$\overline{c} = \sup_{\beta, j} c_{\beta}^j$.
From \eqref{eq:pointwise_da}, we then have
\begin{align*}
|F_{\alpha,\beta}(t)|
&\lesssim t^{-d/2} \prod_{j = 1}^d
    \bk{1 + \frac{|x_\alpha^j- x_\beta^j|^2}{4 \overline{c}t}
	+ \frac{|x_\alpha^j- x_\beta^j|^3}{\abs{4 \overline{c}t}^\thf} }^{-1}.
\end{align*}
Therefore, as in the proof of \eqref{eq:a_lp}, we find
\begin{equation}\label{eq:a_alpha_beta_normbounds1}
\begin{aligned}
\|\bm{F}(t)\|_{L^\infty_\alpha L^{p'}_\beta} \lesssim t^{-d/2p},
\end{aligned}
\end{equation}
and
\begin{equation}\label{eq:a_alpha_beta_normbounds2}
\begin{aligned}
\|\bm{F}\|_{L^\infty_t L^\infty_\alpha L^1_\beta} \lesssim 1.
\end{aligned}
\end{equation}
From \eqref{eq:Phi_L1Lp} in Lemma \ref{lem:auxcalc3}, for $p > d$ we also have
\begin{align}\label{eq:Phi_normbound}
\norm{\bm \Phi}_{L^1_t([0,T_0]) L^\infty_\alpha L^{p'}_\beta}
\lesssim 1.
\end{align}
By the triangle inequality and Young's 
convolution inequality (Lemma \ref{lem:young}), 
we therefore find that
\begin{align*}
\|\bm{\Gamma}(t)\|_{L^\infty_\alpha L^{p'}_\beta} 
&\leq \|\bm{F}(t)\|_{L^\infty_\alpha L^{p'}_\beta} 
	+ \|(\bm{F}\dconv\bm{\Phi})(t)\|_{L^\infty_\alpha L^{p'}_\beta} \\
&\leq \|\bm{F}(t)\|_{L^\infty_\alpha L^{p'}_\beta} 
	+ \|\bm{F}\dconv\bm{\Phi}\|_{L^\infty_t([0,t])L^\infty_\alpha L^{p'}_\beta} \\
&\leq \|\bm{F}(t)\|_{L^\infty_\alpha L^{p'}_\beta} 
	+ \|\bm{F}\|_{L^\infty_t([0,t])L^\infty_\alpha L^1_\beta}
	\|\bm{\Phi}\|_{L^1_t([0,t])L^\infty_\alpha L^{p'}_\beta} \\
&\lesssim t^{-d/2p} + 1.\qedhere
\end{align*}
\end{proof}

We now extend uniquely the definition of 
${\bm \Gamma}$ from $[0,{T_0}]$ to all times.

\begin{lemma}
Let ${T_0}$ be as given in Lemma \ref{lem:auxcalc3}.
For $k \in \N$, and $t \in [0,{T_0}]$,
recursively define $\bm{\Gamma}(kT_0+t)
	=\bm{\Gamma}(t)\dconv \bm{\Gamma}(kT_0)$, that is,
\begin{align}\label{eq:Gamma_propagation}
\Gamma_{\alpha, \beta}(k{T_0} + t)
= \sum_{\eta \in \Z^d} \Gamma_{\alpha, \eta}(t)
    \Gamma_{\eta,\beta}(k{T_0})\Dx^d.
\end{align}
Then $\bm{\Gamma}$ is the unique fundamental
solution to \eqref{eq:heat_variable_coeff} on any time interval $[0,T]$.
Moreover,
\begin{align}\label{eq:Gamma_bound2}
\norm{\bm{\Gamma}(k{T_0})}_{L^\infty_\alpha L^{p'}_\beta} 
	\lesssim_k {T_0}^{-d/2p} \qquad \forall\ k\in\N,
\end{align}
and for any $T >0$,
\begin{align}
\|\bm{\Gamma}\|_{L^1_t([0,T])L^\infty_\alpha L^{p'}_\beta} 
&\lesssim_{T,p} 1, \label{eq:Gamma_bound4} \\
\|\bm{\Gamma}\|_{L^\infty_t([0,T])L^\infty_\alpha L^1_\beta} 
&\lesssim_{T,p} 1. \label{eq:Gamma_bound5}
\end{align} 
\end{lemma}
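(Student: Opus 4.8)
The plan is to regard $\bm{\Gamma}$ as the discrete heat semigroup generated by the time-independent operator $\varphi_\alpha\mapsto\tfrac12\sum_{j=1}^d c_\alpha^j\nabla_+^j\nabla_-^j\varphi_\alpha$, and to propagate the solution formula of Lemma \ref{lem:full_greensfunction} from $[0,T_0]$ to all times via the Chapman--Kolmogorov relation \eqref{eq:Gamma_propagation}. First I would verify that the recursively extended $\bm{\Gamma}$ solves \eqref{eq:fund_soln_variable_heat} on each interval $(kT_0,(k+1)T_0)$: fixing $\beta$ and writing $t=kT_0+s$ with $s\in(0,T_0)$, one differentiates \eqref{eq:Gamma_propagation} in $t$ and uses that the difference operators act only on the $\alpha$-index while $\Gamma_{\eta,\beta}(kT_0)$ is independent of $\alpha$, so the operator passes through the $\eta$-summation and, since $\bm{\Gamma}(\cdot)$ solves the equation on $[0,T_0]$ by Lemma \ref{lem:full_greensfunction}, one gets $\frac{\d}{\d t}\Gamma_{\alpha,\beta}(t)=\tfrac12\sum_{j=1}^d c_\alpha^j\nabla_+^j\nabla_-^j\Gamma_{\alpha,\beta}(t)$. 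Continuity across the grid times $t=kT_0$ follows from $\Gamma_{\alpha,\eta}(0)=\delta_{\alpha-\eta}$, which reproduces $\Gamma_{\alpha,\beta}(kT_0)$ in \eqref{eq:Gamma_propagation}; the one-sided $t$-derivatives then match automatically through the equation, so $\bm{\Gamma}$ is $C^1$ in $t$ on $(0,\infty)$ with initial value $\delta_{\alpha-\beta}$. Uniqueness on any $[0,T]$ is then immediate: for each fixed $\beta$ the column $\alpha\mapsto\Gamma_{\alpha,\beta}$ solves \eqref{eq:heat_variable_coeff} with $f\equiv0$ and datum $\psi=\delta_{\cdot-\beta}\in L^\infty_\Dx(\R^d)$, so Lemma \ref{lem:basic-existence} forces it to coincide with the unique such solution.

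For the quantitative estimates I would first record an $L^1_\beta$ companion to the bounds already in hand, since it is what makes the Young-inequality bookkeeping close. From the Gaussian-type pointwise bound \eqref{eq:K_estimate} one obtains $\norm{\bm{K}(t)}_{L^\infty_\alpha L^1_\beta}\lesssim t^{-1/2}$ (summing the product of one-dimensional kernels, each of which sums to $O(\sqrt t)$), hence $\bm{K}\in L^1_t([0,T_0])L^\infty_\alpha L^1_\beta$; this is precisely the quantity $C_\infty T^{1/2}$ appearing in the proof of Lemma \ref{lem:auxcalc3}. Rerunning the Weierstrass M-test argument of that lemma in $L^1_\beta$ then gives $\norm{\bm{\Phi}}_{L^1_t([0,T_0])L^\infty_\alpha L^1_\beta}\lesssim1$. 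Combining this with $\norm{\bm{F}}_{L^\infty_t L^\infty_\alpha L^1_\beta}\lesssim1$ from \eqref{eq:a_alpha_beta_normbounds2}, the representation $\bm{\Gamma}=\bm{F}+\bm{F}\dconv\bm{\Phi}$ of \eqref{eq:Gamma_representation}, and Young's inequality (Lemma \ref{lem:young}), yields $\norm{\bm{\Gamma}}_{L^\infty_t([0,T_0])L^\infty_\alpha L^1_\beta}\lesssim1$, i.e.\ \eqref{eq:Gamma_bound5} on the initial interval.

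With the two building blocks $\norm{\bm{\Gamma}(t)}_{L^\infty_\alpha L^{p'}_\beta}\lesssim t^{-d/2p}$ (from \eqref{eq:Gamma_bound}) and $\norm{\bm{\Gamma}(t)}_{L^\infty_\alpha L^1_\beta}\lesssim1$ on $[0,T_0]$, I would prove \eqref{eq:Gamma_bound2} by induction on $k$ using $\bm{\Gamma}(kT_0)=\bm{\Gamma}(T_0)\dconv\bm{\Gamma}((k-1)T_0)$: applying Lemma \ref{lem:young}(i) with the factor $\bm{\Gamma}(T_0)$ taken in $L^\infty_\alpha L^1_\eta$ (of size $C\lesssim1$) and $\bm{\Gamma}((k-1)T_0)$ in $L^\infty_\eta L^{p'}_\beta$ gives
\[
\norm{\bm{\Gamma}(kT_0)}_{L^\infty_\alpha L^{p'}_\beta}\leq C^{\,k-1}\norm{\bm{\Gamma}(T_0)}_{L^\infty_\alpha L^{p'}_\beta}\lesssim_k T_0^{-d/2p},
\]
and the same induction in $L^1$--$L^1$ yields $\norm{\bm{\Gamma}(kT_0)}_{L^\infty_\alpha L^1_\beta}\lesssim_k1$. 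The global bounds \eqref{eq:Gamma_bound4}--\eqref{eq:Gamma_bound5} then follow by splitting $[0,T]$ into the $\lceil T/T_0\rceil$ subintervals $[kT_0,(k+1)T_0]$ and writing $\bm{\Gamma}(t)=\bm{\Gamma}(s)\dconv\bm{\Gamma}(kT_0)$ with $s=t-kT_0\in[0,T_0]$. For \eqref{eq:Gamma_bound5} both factors are bounded in the relevant $L^1_\beta$-type norms, so $\norm{\bm{\Gamma}(t)}_{L^\infty_\alpha L^1_\beta}\lesssim_k1$ uniformly and summing the finitely many intervals gives $\lesssim_{T,p}1$. For \eqref{eq:Gamma_bound4}, on the first interval the bound $s^{-d/2p}$ (integrable since $d/2p<1/2<1$) controls $\int_0^{T_0}$, while for $k\geq1$ I bound $\norm{\bm{\Gamma}(t)}_{L^\infty_\alpha L^{p'}_\beta}\leq\norm{\bm{\Gamma}(s)}_{L^\infty_\alpha L^1_\eta}\norm{\bm{\Gamma}(kT_0)}_{L^\infty_\eta L^{p'}_\beta}\lesssim_k1$ uniformly, whose time integral over the finite interval is controlled; summing over $k$ gives \eqref{eq:Gamma_bound4}.

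The step I expect to require the most care is not any single estimate but the verification in the first step that \eqref{eq:Gamma_propagation} defines a genuine $C^1$-in-$t$ solution of \eqref{eq:fund_soln_variable_heat}. The two points to watch are the interchange of the finite-stencil operator $\tfrac12\sum_j c_\alpha^j\nabla_+^j\nabla_-^j$ and of $\frac{\d}{\d t}$ with the infinite $\eta$-summation---justified for fixed $\Dx$ by the rapid decay in $\eta$ of $\Gamma_{\alpha,\eta}(s)$ and its second differences, which follows from Lemma \ref{lem:green_estimates}\ref{lem:green_estimates_derivative-bound} and the representation \eqref{eq:Gamma_representation}---and the $C^1$-matching of the one-sided $t$-derivatives at the grid times $t=kT_0$. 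Once this is settled, uniqueness reduces to Lemma \ref{lem:basic-existence} and all three norm bounds are bookkeeping with Lemma \ref{lem:young}.
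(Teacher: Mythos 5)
Your proposal is correct and follows essentially the same route as the paper: differentiate the propagation formula \eqref{eq:Gamma_propagation} to verify the ODE on each interval $(kT_0,(k+1)T_0)$, invoke the uniqueness clause of Lemma \ref{lem:basic-existence} to identify the extension as \emph{the} fundamental solution, and obtain \eqref{eq:Gamma_bound2}--\eqref{eq:Gamma_bound5} by iterating Young's inequality (Lemma \ref{lem:young}) over the subintervals. The only cosmetic difference is that you re-derive the bound $\|\bm{\Gamma}(t)\|_{L^\infty_\alpha L^1_\beta}\lesssim 1$ by rerunning the parametrix estimates in $L^1_\beta$, whereas the paper simply reads it off from \eqref{eq:Gamma_bound} with $p=\infty$ (so $p'=1$ and $t^{-d/2p}=1$).
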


As a consequence of uniqueness 
of the fundamental solution, 
\eqref{eq:Gamma_propagation} will in fact hold for ${\bm \Gamma}$ 
with any value of $t, T_0 \in [0,\infty)$, 
with $t$ not necessarily smaller than $T_0$.

\begin{proof}
Let us first establish the estimates showing
that the series converges and is bounded as stated. 
We apply Young's convolution inequality 
(Lemma \ref{lem:young}) repeatedly and 
use \eqref{eq:Gamma_bound} to obtain
\begin{align*}
\|\bm{\Gamma}(kT_0)\|_{L^\infty_\alpha L^{p'}_\beta} 
&\leq \|\bm{\Gamma}(T_0)\|_{L^\infty_\alpha L^1_\beta}	
	\|\bm{\Gamma}((k-1)T_0)\|_{L^\infty_\alpha L^{p'}_\beta} \\
&\leq \cdots 
	\leq \|\bm{\Gamma}(T_0)\|_{L^\infty_\alpha L^1_\beta}^{k-1}
	\|\bm{\Gamma}(T_0)\|_{L^\infty_\alpha L^{p'}_\beta}
\lesssim_k T_0^{-d/2p}.
\end{align*}
Moreover, \eqref{eq:Gamma_bound} with 
$\infty$ in place of $p$ yields 
$\|\bm{\Gamma}(t)\|_{L^\infty_\alpha L^1_\beta} 
	\lesssim 1$. Therefore, for any $k\in\N_0$,
\begin{align*}
\|\bm{\Gamma}\|_{L^1_t([kT_0,(k+1)T_0]),L^\infty_\alpha L^{p'}_\beta} 
&= \int_{0}^{T_0}\|\bm{\Gamma}(t)\dconv
	\bm{\Gamma}(kT_0)\|_{L^\infty_\alpha L^{p'}_\beta}\,\d t \\
&\leq \int_{0}^{T_0}\|\bm{\Gamma}(t)\|_{L^\infty_\alpha L^1_\beta} 
	\|\bm{\Gamma}(kT_0)\|_{L^\infty_\alpha L^{p'}_\beta}\,\d t \\
&\leq T_0 \cdot T_0^{-d/2p}.
\end{align*}
Splitting an arbitrary time interval $[0,T]$ into 
subintervals of length at most $T_0$ then yields 
\eqref{eq:Gamma_bound4}. A similar estimate will yield
\[
\|\bm{\Gamma}\|_{L^\infty_t([0,kT_0])L^\infty_\alpha L^1_\beta}
\leq \|\bm{\Gamma}\|_{L^\infty_t([0,T_0])L^\infty_\alpha L^1_\beta} 
\|\bm{\Gamma}(T_0)\|_{L^\infty_\alpha L^1_\beta}^k \lesssim 1,
\]
again due to \eqref{eq:Gamma_bound}.

Next we show that the extension of 
$\bm{\Gamma}$ is a fundamental solution on $[0,T]$.
We already know that $\Gamma_{\alpha, \beta}(0) = \delta_{\alpha - \beta}$.
Taking a temporal derivative on both sides of
\eqref{eq:Gamma_propagation}, for $t \in (0,T_0)$, we find:
\begin{align*}
&\frac{\d}{\d t} \Gamma_{\alpha, \beta}(k {T_0} + t) \\
& = \sum_{\eta \in \Z^d} \frac{\d}{\d t} \Gamma_{\alpha, \eta}(t)
	\Gamma_{\eta, \beta}(k {T_0})\Dx^d
= \sum_{\eta \in \Z^d} \frac12 \sum_{j = 1}^d
    c_{\alpha}^j \nabla_+^j \nabla_-^j \Gamma_{\alpha, \eta}(t)
    \Gamma_{\eta, \beta}(k {T_0})\Dx^d\\
& =\frac12 \sum_{j = 1}^d c_{\alpha}^j \nabla_+^j \nabla_-^j
    \sum_{\eta \in \Z^d} \Gamma_{\alpha, \eta}(t)
    \Gamma_{\eta, \beta}(k {T_0})\Dx^d
 = \frac12 \sum_{j = 1}^d c_{\alpha}^j \nabla_+^j \nabla_-^j
    \Gamma_{\alpha, \beta}(k {T_0} + t).
\end{align*}
Hence, the extension of $\bm\Gamma$ satisfies the 
ODE on any interval of the form $\big(kT_0,(k+1)T_0\big)$. 
But the extended function $\bm\Gamma$ is also locally 
Lipschitz in $t$, so by the uniqueness result in Lemma 
\ref{lem:basic-existence}, it must coincide with the 
unique $C^1_t$ solution of the ODE. 
This concludes the proof.
\end{proof}

Next, we proceed to establish bounds on the spatial differences
of $\bm{\Gamma}$, similar to those
for $\bm{a}$ in Lemma \ref{lem:green_estimates}. 
Recall in particular the definition
of $\nabla_+^\ell$ in \eqref{eq:difference_operators}.

\begin{lemma}[Fundamental solution estimates]
\label{lem:full_greensfunction2}
For any $p > d$, we have
\begin{equation*}
\|\nabla_\pm^\ell\bm{\Gamma}\|_{L^1_t([0,T])L^\infty_\alpha L^{p'}_\beta} \lesssim_{T,p} 1.
\end{equation*}
\end{lemma}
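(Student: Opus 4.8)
The plan is to differentiate the solution formula \eqref{eq:Gamma_representation} and reduce everything to estimates already established for the parametrix kernel and for $\bm{\Phi}$. Retaining the notation $F_{\alpha,\beta}(t)=a_{\alpha-\beta,\beta}(t)$, so that $\bm{\Gamma}=\bm{F}+\bm{F}\dconv\bm{\Phi}$ on $[0,T_0]$, I first observe that $\nabla_\pm^\ell$ acts only on the output index $\alpha$, which appears solely in the first factor of the discrete convolution. Hence, by linearity,
\[
\nabla_\pm^\ell\bm{\Gamma} = \nabla_\pm^\ell\bm{F} + (\nabla_\pm^\ell\bm{F})\dconv\bm{\Phi},
\]
and it suffices to bound the two terms on the right in $L^1_t([0,T_0])L^\infty_\alpha L^{p'}_\beta$.

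For the first term I would apply the pointwise bound \eqref{eq:pointwise_da} of Lemma~\ref{lem:green_estimates} with $m=1$ (writing $\nabla_-^\ell a_\alpha=\nabla_+^\ell a_{\alpha-e_\ell}$ to cover both signs uniformly). Differencing costs one extra power $t^{-1/2}$, so the summation that produced \eqref{eq:a_alpha_beta_normbounds1} now gives $\|\nabla_\pm^\ell\bm{F}(t)\|_{L^\infty_\alpha L^{p'}_\beta}\lesssim t^{-d/2p-1/2}$. The decisive point is that this time singularity is integrable near $t=0$ precisely when $p>d$, since then $\tfrac{d}{2p}+\tfrac12<1$; integrating yields $\|\nabla_\pm^\ell\bm{F}\|_{L^1_t([0,T_0])L^\infty_\alpha L^{p'}_\beta}\lesssim T_0^{1/2-d/2p}\lesssim 1$. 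For the convolution term I would invoke Young's inequality (Lemma~\ref{lem:young}(ii)) with the intermediate index $\eta$ split so that $\nabla_\pm^\ell\bm{F}$ carries $L^1_\eta$ and $\bm{\Phi}$ carries $L^\infty_\eta$ (the conjugacy $\tfrac{1}{p_3}+\tfrac{1}{q_2}=1$ holding with $p_3=1$, $q_2=\infty$), and all three time exponents equal to $1$. This bounds it by $\|\nabla_\pm^\ell\bm{F}\|_{L^1_tL^\infty_\alpha L^1_\eta}\,\|\bm{\Phi}\|_{L^1_tL^\infty_\eta L^{p'}_\beta}$; the second factor is $\lesssim1$ by \eqref{eq:Phi_normbound}, and the first is the $p=\infty$ endpoint of the previous estimate, namely $\lesssim\int_0^{T_0}t^{-1/2}\,dt\lesssim1$. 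This settles the claim on $[0,T_0]$.

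To pass to an arbitrary $[0,T]$, I would exploit the propagation identity \eqref{eq:Gamma_propagation}, which is a purely spatial convolution, so $\nabla_\pm^\ell$ again lands on the first factor: $\nabla_\pm^\ell\bm{\Gamma}(kT_0+t)=(\nabla_\pm^\ell\bm{\Gamma}(t))\dconv\bm{\Gamma}(kT_0)$. Spatial Young (Lemma~\ref{lem:young}(i)) then gives $\|\nabla_\pm^\ell\bm{\Gamma}(kT_0+t)\|_{L^\infty_\alpha L^{p'}_\beta}\le\|\nabla_\pm^\ell\bm{\Gamma}(t)\|_{L^\infty_\alpha L^1_\eta}\,\|\bm{\Gamma}(kT_0)\|_{L^\infty_\eta L^{p'}_\beta}$, where the last factor is a finite constant by \eqref{eq:Gamma_bound2}. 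Integrating in $t$ over $[0,T_0]$ and using the $p=\infty$ case of the $[0,T_0]$ estimate to control $\int_0^{T_0}\|\nabla_\pm^\ell\bm{\Gamma}(t)\|_{L^\infty_\alpha L^1_\eta}\,dt\lesssim1$ bounds each block $[kT_0,(k+1)T_0]$; summing over the finitely many blocks needed to cover $[0,T]$ produces the asserted bound, with constant depending on $T$ and $p$.

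I expect the genuine obstacle to be confined to the time-integrability bookkeeping at $t=0$: differentiating the kernel introduces the extra factor $t^{-1/2}$, so the worst integrand behaves like $t^{-d/2p-1/2}$, and keeping this integrable is exactly what forces—and is furnished by—the hypothesis $p>d$. The remaining labour is the careful matching of exponents in Young's inequality so that the singular time weights combine correctly and the intermediate index is always placed in $L^1$ against the $L^\infty$ factor.
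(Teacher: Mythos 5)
Your proposal is correct and follows essentially the same route as the paper: difference the parametrix representation $\bm{\Gamma}=\bm{F}+\bm{F}\dconv\bm{\Phi}$ so that $\nabla_\pm^\ell$ falls on the kernel factor, use the pointwise bound \eqref{eq:pointwise_da} (giving the integrable singularity $t^{-d/2p-1/2}$, which is where $p>d$ enters) together with \eqref{eq:Phi_normbound} and Young's inequality, and then propagate to $[0,T]$ block by block via \eqref{eq:Gamma_propagation} and \eqref{eq:Gamma_bound2}. Your handling of the convolution term directly in $L^1_t$ via the space--time Young inequality is, if anything, a slightly cleaner bookkeeping of the step the paper compresses into ``calculate as in Step 2''.
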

\begin{proof}
Using \eqref{eq:Gamma_representation},
we take a difference
in direction $\ell$ to get 
\begin{align*}
\nabla_\pm^\ell \Gamma_{\alpha, \beta}(t)
&= \nabla_\pm^\ell  a_{\alpha - \beta, \beta}(t)
+ \int_{0}^t \sum_{\eta \in \Z^d}\nabla_\pm^\ell
a_{\alpha - \eta, \eta}(t - s)\Phi_{\eta, \beta}(s) \Dx^d \,\d s.
\end{align*}
Controlling $|\nabla_\pm^\ell a_{\alpha, \beta}(t)|$ by
\eqref{eq:pointwise_da} and  $\Phi$ by \eqref{eq:Phi_normbound},
we can calculate as in Step 2 of the proof of Lemma
\ref{lem:full_greensfunction} to get
\begin{align*}
\norm{\nabla_\pm^\ell \bm{\Gamma}(t)}_{L^\infty_\alpha L^{p'}_\beta}
    \lesssim t^{-d/2p - 1/2}.
\end{align*}
Using \eqref{eq:Gamma_propagation}, and keeping in mind
that the finite differences act only on the index $\alpha$,
we have
\begin{align*}
\nabla_\pm^\ell \Gamma_{\alpha, \beta}(k{T_0} + t)
= \sum_{\eta \in \Z^d} \nabla_\pm^\ell  \Gamma_{\alpha, \eta}(t)
    \Gamma_{\eta,\beta}(k{T_0})\Dx^d 
= \bigl(\bigl(\nabla_\pm^\ell \bm{\Gamma}(t)\bigr)
	\dconv \bm{\Gamma}(kT_0)\bigr)_{\alpha,\beta}.
\end{align*}
From this we find, using \eqref{eq:Gamma_bound2} as before, that
\begin{align*}
\|\bm{\Gamma}\|_{L^1([kT_0,(k+1)T_0])L^\infty_\alpha L^{p'}_\beta}
\lesssim \int_0^{T_0} t^{-d/2p - 1/2}\,\d t \lesssim 1
\end{align*}
since $p>d$. Summing over $k$ yields the desired 
bound over an arbitrary time interval $[0,T]$.
\end{proof}

Similar to the Duhamel representation formula
established in Lemma \ref{lem:duhamel_representation}, we have
the following formula for solutions of \eqref{eq:heat_variable_coeff}.

\begin{lemma}[Duhamel's formula]
\label{lem:duhamel_representation2}
Fix $p,q$ satisfying $p > d$ and $1/q+d/2p<1$.
Let $\bm{f} \in L^{q}\bigl([0,T];L^p_\Dx(\R^d)\bigr)$
and $\bm{\psi}  \in L^\infty_\Dx(\R^d)$.
Then the solution $\bm{\varphi}$ of the
non-homogeneous semi-discrete parabolic
equation \eqref{eq:heat_variable_coeff} is given by
\begin{equation}\label{eq:duhamel_representation2}
	\varphi_\alpha(t)
	= \sum_{\eta \in \Z^d} \Gamma_{\alpha, \eta}(t) \psi_\eta \Dx^d
	+ \int_0^t \sum_{\eta \in \Z^d}
	\Gamma_{\alpha, \eta}(t - s) f_{\eta}(s) \Dx^d \,\d s.
\end{equation}
\end{lemma}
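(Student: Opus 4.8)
The plan is to follow the template of Lemma \ref{lem:duhamel_representation}, with the variable-coefficient fundamental solution $\bm\Gamma$ playing the role of the convolution kernel $\bm a$. The essential simplification over the constant-coefficient case is that, since the difference operators $\nabla_\pm^j$ act only on the index $\alpha$ while the coefficient $c_\alpha^j$ in \eqref{eq:fund_soln_variable_heat} carries the same index, the operator $\tfrac12\sum_j c_\alpha^j\nabla_+^j\nabla_-^j$ commutes with the $\eta$-summation in \eqref{eq:duhamel_representation2} by linearity alone, without any summation-by-parts manoeuvre on the kernel.

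First I would check that the right-hand side of \eqref{eq:duhamel_representation2} is well-defined and bounded. The homogeneous part is controlled by H\"older's inequality together with \eqref{eq:Gamma_bound5}, giving $\sup_\alpha\bigl|\sum_\eta\Gamma_{\alpha,\eta}(t)\psi_\eta\Dx^d\bigr|\le\|\bm\Gamma\|_{L^\infty_tL^\infty_\alpha L^1_\beta}\|\bm\psi\|_{L^\infty}\lesssim\|\bm\psi\|_{L^\infty}$. For the Duhamel integral, H\"older in $\beta$ and \eqref{eq:Gamma_bound} yield the pointwise bound $\int_0^t\|\bm\Gamma(t-s)\|_{L^\infty_\alpha L^{p'}_\beta}\|\bm f(s)\|_{L^p}\,\d s\lesssim\int_0^t(t-s)^{-d/(2p)}\|\bm f(s)\|_{L^p}\,\d s$. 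Since the hypothesis $1/q+d/(2p)<1$ is precisely $\tfrac{d}{2p}q'<1$ (with $q'$ the conjugate of $q$), the kernel $(t-s)^{-d/(2p)}$ lies in $L^{q'}_s([0,t])$, so H\"older in time bounds this by $\|\bm f\|_{L^q_tL^p}$ up to a constant depending on $T$. Hence $\bm\varphi\in L^\infty_tL^\infty_\alpha$.

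Next I would verify that $\bm\varphi$ solves \eqref{eq:heat_variable_coeff}. The initial condition is immediate from $\Gamma_{\alpha,\eta}(0)=\delta_{\alpha-\eta}$ (so the first term reduces to $\psi_\alpha$) and the vanishing of the integral at $t=0$. Differentiating in $t$, the homogeneous term produces $\tfrac12\sum_jc_\alpha^j\nabla_+^j\nabla_-^j$ of itself by \eqref{eq:fund_soln_variable_heat}, the difference operators passing through the $\eta$-sum; for the integral, the Leibniz rule for a variable upper limit contributes the boundary term $\sum_\eta\Gamma_{\alpha,\eta}(0)f_\eta(t)\Dx^d=f_\alpha(t)$ plus the interior term $\tfrac12\sum_jc_\alpha^j\nabla_+^j\nabla_-^j$ applied to the Duhamel integral. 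Adding these gives exactly $\frac{\d}{\d t}\varphi_\alpha=\tfrac12\sum_jc_\alpha^j\nabla_+^j\nabla_-^j\varphi_\alpha+f_\alpha$. The difference of any two solutions then solves the homogeneous equation with zero data, so the Gronwall argument underlying Lemma \ref{lem:basic-existence} (valid since the forcing $\bm f\in L^q_tL^\infty_\alpha$ for each fixed $\Dx$, because $L^p_\Dx\hookrightarrow L^\infty_\Dx$) identifies $\bm\varphi$ as the unique solution.

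The main obstacle is justifying the term-by-term differentiation, both under the infinite $\eta$-sum and under the time integral near $s=t$, where the $\Dx$-\emph{uniform} estimate $\|\partial_t\bm\Gamma(\tau)\|_{L^\infty_\alpha L^{p'}_\beta}\sim\tau^{-d/(2p)-1}$ fails to be time-integrable. I would resolve this by observing that, for each fixed $\Dx>0$, the operator $A\coloneqq\tfrac12\sum_jc_\alpha^j\nabla_+^j\nabla_-^j$ is a \emph{bounded} linear operator on $L^\infty_\alpha$, with norm $\lesssim\|\bm c\|_{L^\infty}/\Dx^2$ (as in Lemma \ref{lem:basic-existence}). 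Consequently $\bm\Gamma(t)=e^{tA}$ is a uniformly continuous semigroup, each $\Gamma_{\alpha,\beta}(\cdot)$ is real-analytic, and $\|\partial_t\bm\Gamma(\tau)\|=\|Ae^{\tau A}\|$ is bounded uniformly for $\tau\in[0,T]$ by a ($\Dx$-dependent) constant, rendering all interchanges legitimate. In this picture \eqref{eq:duhamel_representation2} is simply the variation-of-constants formula for the Banach-space ODE $\frac{\d}{\d t}\bm\phi=A\bm\phi+\bm f$, $\bm\phi(0)=\bm\psi$; the conditions $p>d$ and $1/q+d/(2p)<1$ are not needed for the representation per se but guarantee the $\Dx$-\emph{uniform} $L^\infty$ bound of the second paragraph, which is what matters for the subsequent duality and convergence arguments.
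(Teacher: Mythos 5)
Your proposal is correct and follows essentially the same route as the paper: differentiate the Duhamel expression in $t$, use that $\bm\Gamma$ satisfies \eqref{eq:fund_soln_variable_heat} together with $\Gamma_{\alpha,\eta}(0)=\delta_{\alpha-\eta}$ to recover the operator term and the boundary term $f_\alpha(t)$, and invoke the exponent conditions $p>d$, $1/q+d/(2p)<1$ only to ensure absolute convergence of the series and time integrals (the paper defers this to the computation in Lemma \ref{lem:duhamel_representation}). Your additional observations --- that the operator commutes with the $\eta$-sum by linearity alone since $c_\alpha^j$ carries the index $\alpha$, that uniqueness follows from the Gronwall argument of Lemma \ref{lem:basic-existence}, and that for fixed $\Dx$ the interchanges are legitimate because the generator is a bounded operator --- are correct refinements of what the paper leaves implicit.
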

\begin{proof}
Recall that the fundamental solution $\bm{\Gamma}$
satisfies \eqref{eq:fund_soln_variable_heat}.
Therefore, we can compute as in the proof
of Lemma \ref{lem:duhamel_representation} to obtain
\begin{align*}
	\frac{\d}{\d t}  \phi_\alpha(t) & =\sum_{\eta \in \Z^d}\frac{\d}{\d t}
	\Gamma_{\alpha, \eta}(t)  \psi_\eta \Dx^d
	+ f_\alpha(t) + \int_0^t  \sum_{\eta \in \Z^d} \frac{\d}{\d t}
	\Gamma_{\alpha, \eta}(t - s) f_{\eta}(s) \Dx^d \,\d s\\
	& = \frac12 \sum_{\eta \in \Z^d}\sum_{j = 1}^d
	c_\alpha^j \nabla_+^j \nabla_-^j
	\Gamma_{\alpha , \eta}(t)  \psi_\eta\Dx^d
	\\*
 &\relspace + f_\alpha(t) + \frac12 \int_0^t  \sum_{\eta \in \Z^d} \sum_{j = 1}^d
	 c_\eta^j \nabla_+^j \nabla_-^j\Gamma_{\alpha, \eta}(t - s)
	 f_{\eta}(s) \Dx^d \,\d s\\
	& = f_\alpha(t) + \frac12 \sum_{j = 1}^d
	c_\alpha^j \nabla_+^j \nabla_-^j\varphi_\alpha (t).
\end{align*}
Here, we used the fact that ${\bm \Gamma}$ is a
fundamental solution, so $\sum_{\eta \in \Z^d}
\Gamma_{\alpha, \eta}(0) f_\eta = f_\alpha$.
By this same fact, as $t \to 0$,
we find $\varphi_\alpha(0) = \psi_\alpha$.
The choice of exponents $p,q$ ensures that
each series above is absolutely convergent;
cf.~the proof of Lemma \ref{lem:duhamel_representation}.
\end{proof}

\subsection{Estimates for the dual scheme}
In this subsection, we prove bounds on
the solutions to the discrete reaction-diffusion equation
\begin{equation}\label{eq:heat_variable_coeff_reaction}
\begin{cases}\displaystyle
	\frac{\d}{\d t} \varphi_\alpha - \frac12 \sum_{j = 1}^d
	c_\alpha^j \nabla_+^j \nabla_-^j  \varphi_\alpha
	+ Z_\alpha\phi_\alpha = 0, & \alpha\in\Z^d,\ t>0, \\
	\varphi_\alpha(0) = 1, & \alpha\in\Z^d,
\end{cases}
\end{equation}
where $Z_{\alpha}$ is given; this equation can be
seen as \eqref{eq:heat_variable_coeff} with a non-homogeneity
of the form $f_\alpha = -Z_\alpha \varphi_\alpha$.
This more closely mirrors the dual scheme
to which we shall apply the results of this section.

We first prove that this modified equation is well-posed and
satisfies reasonable bounds (cf.~\eqref{eq:non-homogeneous-bounds}).

\begin{lemma}[Well-posedness of dual scheme]
\label{lem:dual_phi_bounds}
Fix $p > d$ and ${\bm Z} \in L^p_\Dx(\R^d)$. 
Then there exists
a unique solution to the Cauchy problem 
\eqref{eq:heat_variable_coeff_reaction}.
The solution $\bm{\phi}$ is non-negative 
on $[0,\infty)\times\R^d$ and satisfies
\begin{subequations}\label{eq:phi-bounds}
\begin{align}
	\|\bm{\phi}\|_{L^\infty([0,T]\times \R^d)}
	&\leq C, \label{eq:phi-bound-1}\\*
	\|\nabla_\pm \bm{\varphi}\|_{L^\infty([0,T]\times \R^d)}
	&\leq C, \label{eq:phi-bound-2}
\end{align}
\end{subequations}
where $C$ only depends on $\|\bm Z\|_{L^p_\Dx}$, 
$T$ and on $\|{\bm c}\|_{L^\infty_\Dx}$.
\end{lemma}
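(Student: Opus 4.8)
The plan is to treat \eqref{eq:heat_variable_coeff_reaction} as the inhomogeneous equation \eqref{eq:heat_variable_coeff} with right-hand side $f_\alpha = -Z_\alpha\varphi_\alpha$, and to read off all four conclusions from the Duhamel representation of Lemma \ref{lem:duhamel_representation2} together with the Green's function estimates already established. First I would settle existence and uniqueness: for fixed $\Dx$ the hypothesis $\bm Z\in L^p_\Dx(\R^d)$ already forces $\bm Z\in L^\infty_\Dx(\R^d)$, since $\Dx^d\sum_\alpha|Z_\alpha|^p<\infty$ gives $\sup_\alpha|Z_\alpha|<\infty$, so the operator $\varphi_\alpha\mapsto \frac12\sum_j c_\alpha^j\nabla_+^j\nabla_-^j\varphi_\alpha - Z_\alpha\varphi_\alpha$ is bounded and linear on $L^\infty_\alpha$. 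The fixed-point argument of Lemma \ref{lem:basic-existence} then produces a unique $C^1_t$, $L^\infty_\alpha$-bounded solution $\bm\varphi$. Because $\bm\varphi$ is bounded, $f_\alpha=-Z_\alpha\varphi_\alpha$ lies in $L^\infty_t L^p_\alpha$, which meets the admissible exponents of Lemma \ref{lem:duhamel_representation2} with $q=\infty$ (so $1/q+d/2p=d/2p<1$); hence $\bm\varphi$ obeys the self-referential Duhamel identity
\[ \varphi_\alpha(t) = \sum_{\eta}\Gamma_{\alpha,\eta}(t)\Dx^d - \int_0^t\sum_{\eta}\Gamma_{\alpha,\eta}(t-s)Z_\eta\varphi_\eta(s)\Dx^d\,\d s. \]

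The next ingredient I would record is the \emph{mass-conservation identity} $\sum_\eta\Gamma_{\alpha,\eta}(t)\Dx^d = 1$. This follows from uniqueness: the constant grid function $\varphi\equiv1$ solves \eqref{eq:heat_variable_coeff} with $f\equiv0$ and $\psi\equiv1$ (as $\nabla_+^j\nabla_-^j 1=0$), so by Lemma \ref{lem:duhamel_representation2} it coincides with $\alpha\mapsto\sum_\eta\Gamma_{\alpha,\eta}(t)\Dx^d$. With this, the leading term of the Duhamel identity is identically $1$, and writing $M(t):=\|\bm\varphi(t)\|_{L^\infty_\alpha}$ and estimating the integral by discrete Hölder in $\eta$ (pairing $\Gamma_{\alpha,\cdot}\in L^{p'}_\eta$ against $Z_\eta\varphi_\eta\in L^p_\eta$) gives
\[ M(t) \le 1 + \|\bm Z\|_{L^p_\Dx}\int_0^t \|\bm\Gamma(t-s)\|_{L^\infty_\alpha L^{p'}_\beta}\,M(s)\,\d s. \]
Since $p>d$, the kernel $\tau\mapsto\|\bm\Gamma(\tau)\|_{L^\infty_\alpha L^{p'}_\beta}$ lies in $L^1([0,T])$ by \eqref{eq:Gamma_bound4} — it is only \emph{weakly} singular, of order $\tau^{-d/2p}$ near $0$ by \eqref{eq:Gamma_bound}. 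A Gronwall inequality for weakly singular Volterra kernels (convergence of the Volterra resolvent, valid precisely because the kernel is $L^1$) then bounds $M(t)$ by a constant depending only on $\|\bm Z\|_{L^p_\Dx}$, $T$ and $\|\bm c\|_{L^\infty_\Dx}$, giving \eqref{eq:phi-bound-1}.

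For non-negativity I would \emph{not} use the Duhamel identity, since the factor $Z_\eta$ is sign-changing; instead I would argue at the level of the ODE. Writing the diffusion term as $\frac1{2\Dx^2}\sum_j c_\alpha^j(\varphi_{\alpha+e_j}+\varphi_{\alpha-e_j})$ minus a diagonal term, the generator $A$ of \eqref{eq:heat_variable_coeff_reaction} has nonnegative off-diagonal entries (as $c_\alpha^j>0$), i.e.\ it is \emph{quasi-positive}. Choosing $\lambda$ large enough (depending on $\Dx$, $\|\bm c\|_{L^\infty}$ and $\|\bm Z\|_{L^\infty}$) that $B:=A+\lambda I$ has all entries nonnegative, the bounded operator $B$ maps nonnegative grid functions to nonnegative ones, hence so does $e^{tB}=\sum_n\frac{t^n}{n!}B^n$; therefore $\bm\varphi(t)=e^{tA}\mathbf{1}=e^{-\lambda t}e^{tB}\mathbf{1}\ge0$, because $\mathbf{1}\ge0$.

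Finally, the gradient bound, which is the crux of the lemma. Applying $\nabla_\pm^\ell$ (acting on $\alpha$) to the Duhamel identity, the leading term disappears entirely, since $\nabla_\pm^\ell\bigl(\sum_\eta\Gamma_{\alpha,\eta}(t)\Dx^d\bigr)=\nabla_\pm^\ell(1)=0$ by mass conservation, leaving
\[ \nabla_\pm^\ell\varphi_\alpha(t) = -\int_0^t\sum_{\eta}\nabla_\pm^\ell\Gamma_{\alpha,\eta}(t-s)\,Z_\eta\varphi_\eta(s)\,\Dx^d\,\d s. \]
The same Hölder splitting as before, together with the bound $\|\bm\varphi\|_{L^\infty_{t,x}}\le C$ from the previous step and the crucial estimate $\|\nabla_\pm^\ell\bm\Gamma\|_{L^1_t([0,T])L^\infty_\alpha L^{p'}_\beta}\lesssim_{T,p}1$ of Lemma \ref{lem:full_greensfunction2}, then yields
\[ \|\nabla_\pm^\ell\bm\varphi\|_{L^\infty_{t,x}} \le \|\bm Z\|_{L^p_\Dx}\,\|\bm\varphi\|_{L^\infty_{t,x}}\,\|\nabla_\pm^\ell\bm\Gamma\|_{L^1_t L^\infty_\alpha L^{p'}_\beta} \le C, \]
which is \eqref{eq:phi-bound-2}. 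The hard part is exactly this last step: a naïve differentiation of the initial-data contribution would produce the non-integrable singularity $\tau^{-d/2p-1/2}$, and it is only the cancellation afforded by mass conservation — combined with the fact that $p>d$ renders $\|\nabla_\pm\bm\Gamma\|_{L^\infty_\alpha L^{p'}_\beta}$ integrable in time once that singular leading term is removed — that makes a $\Dx$-independent bound possible.
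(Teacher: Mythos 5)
Your proof is correct, and for the two quantitative bounds it follows the same route as the paper: the Duhamel identity, H\"older pairing of $\bm\Gamma$ against $Z\varphi$, a singular Gronwall argument for \eqref{eq:phi-bound-1}, and for \eqref{eq:phi-bound-2} the crucial cancellation of the initial-data contribution combined with $\|\nabla_\pm\bm\Gamma\|_{L^1_tL^\infty_\alpha L^{p'}_\beta}\lesssim 1$ from Lemma \ref{lem:full_greensfunction2}. Two sub-steps differ genuinely. First, you obtain the self-referential Duhamel identity by bootstrapping: crude well-posedness from the bounded-generator ODE (Lemma \ref{lem:basic-existence}, using that $\bm Z\in L^p_\Dx$ forces $\bm Z\in L^\infty_\Dx$ for fixed $\Dx$), then feeding the known bounded $f=-\bm Z\bm\varphi$ into Lemma \ref{lem:duhamel_representation2}; the paper instead runs a Picard iteration on $C([0,T'];L^\infty_\Dx)$ to construct the fixed point of \eqref{eq:duhamel2} and restarts on successive intervals. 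Your shortcut is legitimate and shorter, though it leans on the (true) uniqueness assertion implicit in Lemma \ref{lem:duhamel_representation2}. Your mass-conservation identity $\sum_\eta\Gamma_{\alpha,\eta}(t)\Dx^d=1$ (via uniqueness against the constant solution) is a cleaner way to organise the same cancellation the paper achieves by passing to $\tilde{\bm\varphi}=\bm\varphi-1$; the two are identical in effect since $\nabla_\pm\tilde{\bm\varphi}=\nabla_\pm\bm\varphi$. Second, your non-negativity argument is entirely different: quasi-positivity of the generator, writing $e^{tA}=e^{-\lambda t}e^{t(A+\lambda I)}$ with $\lambda$ large enough (depending on $\Dx$) that $A+\lambda I$ has non-negative entries, so that the semigroup preserves the positive cone. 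The paper instead multiplies the scheme by $\beta_n'(\varphi_\alpha)$ for a convex approximation $\beta_n$ of $r\mapsto r_-$, uses the discrete commutator formula \eqref{eq:2nd-order-commutator-beta}, and applies Gronwall. Your monotone-operator argument is more elementary and avoids the limit $n\to\infty$; since non-negativity is a qualitative, $\Dx$-by-$\Dx$ statement, the $\Dx$-dependence of $\lambda$ is harmless. Both routes are sound.
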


\begin{proof}
As in the proof of Lemma \ref{lem:basic-existence} 
we can view \eqref{eq:heat_variable_coeff_reaction} 
as an abstract ODE $\frac{\d}{\d t}\bm{\phi}(t) 
	= A(\bm{\phi}(t)) - \bm{Z}\bm{\phi}(t)$. 
The right-hand side is a bounded linear operator 
(with operator norm $\lesssim \|\bm{c}\|_{L^\infty}\Dx^{-2} 
	+ \|\bm{Z}\|_{L^p}\Dx^{-d/p'}$), so the 
equation has a unique solution for all times. 
In order to obtain an expression for $\bm{\phi}$ 
with which to obtain better estimates, we carry 
out a fixed point argument. In particular, the 
solution will be unique, by the above argument.

Let $\bm{\psi}\in L^\infty_\alpha$ and 
let ${\bm \varphi}^{\langle 0\rangle}$ 
be given by the Duhamel representation
$$
\varphi^{\langle 0\rangle}_\alpha(t)
	= \sum_{\eta \in \Z^d} \Gamma_{\alpha, \eta}(t)\psi_\eta \Dx^d
	+ \int_0^t \sum_{\eta \in \Z^d}
	\Gamma_{\alpha, \eta}(t - s) Z_\eta \Dx^d \,\d s,
$$
so that by Lemma \ref{lem:duhamel_representation2},
it solves the non-homogeneous equation
$$
\frac{\d}{\d t} \varphi^{\langle 0\rangle}_\alpha
    - \frac12 \sum_{j = 1}^d
    	c_\alpha^j \nabla_+^j \nabla_-^j
     \varphi^{\langle 0\rangle}_\alpha
	+ Z_\alpha = 0,
$$
with initial condition $\varphi^{\langle 0\rangle}_\alpha(0) 
= \psi_\alpha$. For $n \in \N$, 
let ${\bm \varphi}^{\langle n \rangle}$ be given by
\begin{align*}
\varphi^{\langle n\rangle}_\alpha(t)
	= \sum_{\eta \in \Z^d} \Gamma_{\alpha, \eta}(t)\psi_\eta \Dx^d
	+ \int_0^t \sum_{\eta \in \Z^d}
	\Gamma_{\alpha, \eta}(t - s) Z_\eta \
    \varphi^{\langle n - 1\rangle}_\eta(s) \Dx^d \,\d s,
\end{align*}
so that it solves
$$
\frac{\d}{\d t} \varphi^{\langle n\rangle}_\alpha
    - \frac12 \sum_{j = 1}^d
	   c_\alpha^j \nabla_+^j \nabla_-^j
        \varphi^{\langle n \rangle}_\alpha
	+ Z_\alpha \varphi^{\langle n-1\rangle}_\alpha = 0
$$
with initial condition $\varphi^{\langle n\rangle}_\alpha(0) 
= \psi_\alpha$. For any $m, n \in \N$, and $T'>0$,
we find by H\"older's inequality that
\begin{align*}
&\bigl\|\bm{\varphi}^{\langle n\rangle} - \bm{\varphi}^{\langle m\rangle}\bigr\|_{L^\infty([0,T']\times\R^d)} \\
&\qquad =
\sup_{\substack{t \in [0,T']\\
    \alpha \in \Z^d}}
    \abs{\int_0^t \sum_{\eta \in \Z^d}
	\Gamma_{\alpha, \eta}(t - s) Z_\eta
    \bigl(\varphi^{\langle n - 1\rangle}_\eta(s)
    - \varphi^{\langle m - 1\rangle}_\eta(s)\bigr) \Dx^d \,\d s}\\
&\qquad \le
    \norm{{\bm Z}}_{L^p(\R^d)} \norm{\bm{\Gamma}}_{L^1_t([0,T'])L^\infty_\alpha L^{p'}_\beta}
        \bigl\|{\bm \varphi}^{\langle n - 1 \rangle} -
            {\bm \varphi}^{\langle m - 1\rangle}\bigr\|_{L^\infty([0,T']\times \R^d)}.
\end{align*}

Choose now $T'>0$ such that
\begin{align}\label{eq:existtime}
\norm{{\bm Z}}_{L^p(\R^d)} \norm{\bm{\Gamma}}_{L^1_t([0,T'])L^\infty_\alpha L^{p'}_\beta} < 1
\end{align}
(which is possible due to \eqref{eq:Gamma_bound4}). Then on the metric space $C([0,T'];L^\infty_\Dx(\R^d))$, the sequence $\big({\bm \varphi}^{\langle n\rangle}\big)_{n=1}^\infty$ converges to a unique
fixed point ${\bm \varphi}$ satisfying
\begin{align}\label{eq:duhamel2}
\varphi_\alpha(t)
	= \sum_{\eta \in \Z^d} \Gamma_{\alpha, \eta}(t)\psi_\eta\Dx^d
	+ \int_0^t \sum_{\eta \in \Z^d}
	\Gamma_{\alpha, \eta}(t - s) Z_\eta
    \varphi_\eta(s) \, \Dx^d \,\d s,
\end{align}
so that it solves \eqref{eq:heat_variable_coeff_reaction} with initial data $\phi_\alpha(0)=\psi_\alpha$.

The condition \eqref{eq:existtime} on $T'$
is independent of the initial condition ${\bm \psi}$.
Therefore, we may run the entire argument again with
$\psi_\alpha = \varphi_\alpha(T')$ over the interval
$[T', 2T']$, an so on. This proves that on each interval $[kT', (k+1)T']$, a solution formula of the form \eqref{eq:duhamel2} (but with $\bm\phi(kT')$ in place of $\bm\psi$) indeed holds for the unique solution to \eqref{eq:heat_variable_coeff_reaction}. Iteratively inserting $\bm\phi(kT')$ and using the propagation formula \eqref{eq:Gamma_propagation} now shows that \eqref{eq:duhamel2} holds \emph{for all} times $t>0$.

Using \eqref{eq:duhamel2}, \eqref{eq:Gamma_bound4} 
and \eqref{eq:Gamma_bound5} we get
\begin{align*}
\|\bm{\varphi}(t)\|_{L^\infty_\alpha}
&\le \|\bm{\Gamma}(t)\|_{L^\infty_\alpha L^1_\beta} \|\bm{\psi}\|_{L^\infty}
+ \int_0^t \|\bm{\Gamma}(t - s)\|_{L^\infty_\alpha L^{p'}_\beta} \,
\|{\bm Z}\|_{L^p}
\|\bm{\varphi}(s)\|_{L^\infty} \,\d s \\
&\lesssim \|\bm{\psi}\|_{L^\infty} + \|{\bm Z}\|_{L^p} \int_0^t \|\bm{\Gamma}(t-s)\|_{L^\infty_\alpha L^{p'}_\beta} \|\bm{\varphi}(s)\|_{L^\infty} \,\d s \\
\intertext{(\textit{Young's convolution inequality})}
&\leq \|\bm{\psi}\|_{L^\infty} + \|{\bm Z}\|_{L^p} \|\bm{\Gamma}\|_{L^1_t([0,t])L^\infty_\alpha L^{p'}_\beta} \int_0^t \|\bm{\varphi}(s)\|_{L^\infty} \,\d s.
\end{align*}
By \eqref{eq:Gamma_bound4} we know that $t \mapsto \|\bm{\Gamma}(t-s)\|_{L^\infty_\alpha L^{p'}_\beta}$ lies in $L^1_t$. Therefore, Gronwall's inequality gives
\begin{align*}
\|\bm\phi(t)\|_{L^\infty_\alpha} 
&\leq \|\bm\psi\|_{L^\infty}\Biggl(\begin{aligned}[t]&1 
    + \|{\bm Z}\|_{L^p} \|\bm{\Gamma}\|_{L^1_t([0,t])L^\infty_\alpha L^{p'}_\beta} \\
&\times 
\int_0^t \exp\biggl(\|{\bm Z}\|_{L^p} 
    \int_s^t \|\bm{\Gamma}\|_{L^1_t([0,\tau])L^\infty_\alpha L^{p'}_\beta}\,\d \tau \biggr)\,\d s\Biggr)\end{aligned}
\end{align*}
which implies \eqref{eq:phi-bound-1}.

In order to prove \eqref{eq:phi-bound-2} we define 
$\tilde{\bm{\phi}} \coloneqq \bm\phi - 1$, which solves
\begin{align*}
	0 &= \frac{\d}{\d t}  \tilde{\varphi}_\alpha
	+ Z_\alpha  \bk{\tilde{\varphi}_\alpha  +1}
	- \frac12 \sum_{j = 1}^d c_\alpha^j
	\nabla_+^j \nabla_-^j \tilde{\varphi}_\alpha,
\end{align*}
with initial datum $\tilde{\varphi}_\alpha(0) \equiv 0$. 
According to \eqref{eq:duhamel2}, we have
\begin{align*}
	\tilde{\varphi}_\alpha(t)
	& = \int_0^t \sum_{\eta \in \Z^d} 
	\Gamma_{\alpha, \eta}(t - s) Z_\eta 
	\bk{\tilde{\varphi}_\eta(s) + 1} \,\Dx^d\,\d s.
\end{align*}
Upon applying a difference operator in $\alpha$ 
and inserting $\phi=\tilde\phi+1$, we obtain
\begin{align*}
	\nabla_\pm \tilde{\varphi}_\alpha(t)
	&= \int_0^t \sum_{\eta \in \Z^d}\nabla_\pm
	\Gamma_{\alpha, \eta}(t - s) \,
	Z_\eta \varphi_\eta(s) \,\Dx^d\,\d s.
\end{align*}
Let $p' = p/(p - 1)$ be the H\"older dual
of $p$. By Lemma \ref{lem:full_greensfunction2}, 
and \eqref{eq:phi-bound-1}, we get
\begin{align*}
	\norm{\nabla_\pm {\bm \varphi}(t)}_{L^\infty_\alpha}
	&= \norm{\nabla_\pm {\bm \tilde{\bm \varphi}}(t)}_{L^\infty_\alpha}\\
	&\le \norm{\bm{\varphi}}_{L^\infty_t([0,T]) L^\infty_\alpha}
	\norm{\bm Z}_{L^p_\alpha}
	\int_0^t \norm{\nabla_\pm {\bm\Gamma}(t - s)}_{L^\infty_\alpha L^{p'}_\beta} \,\d  s\\
	&\lesssim  \norm{\bm Z}_{L^p_\alpha}
	\norm{\nabla_\pm {\bm\Gamma}}_{L^1_tL^\infty_\alpha L^{p'}_\beta} \lesssim 1,
\end{align*}
the last inequality following from Lemma \ref{lem:full_greensfunction2}.
This proves \eqref{eq:phi-bound-2}. 

To show that the solution is non-negative, take $\beta_n$ ($n\in\N$)
to be a smooth, convex, uniformly convergent approximation of $r \mapsto {r_-}$,
satisfying $\beta_n'(r) r = \beta_n(r) + o(1)$ uniformly as $n\to\infty$.
Using the commutator formula
\eqref{eq:2nd-order-commutator-beta},  we multiply 
\eqref{eq:heat_variable_coeff_reaction} by
$\beta_n'(\varphi_\alpha)$ to arrive at
\begin{align*}
	0 \ge \frac{\d}{\d t} \beta_n(\varphi_\alpha)
	+ Z_\alpha \abs{\beta_n(\varphi_\alpha) + o(1)}
	- \frac12 \sum_{j = 1}^d c_\alpha^j
	\nabla_-^j \nabla_+^j \beta_n(\varphi_\alpha).
\end{align*}
By taking the limit $n\to\infty$ and summing
over $\alpha \in \Z^d$, we get
\[
\frac{\d}{\d t}  \sum_{\alpha \in \Z^d} {\bk{\varphi_\alpha}_-}
\le  \norm{\bm{Z}}_{L^\infty(\R^d)}
\sum_{\alpha \in \Z^d} {\bk{\varphi_\alpha}_-}
+ \frac{2d}{\Dx^2} \norm{{\bm c}}_{L^\infty(\R^d)}
\sum_{\alpha \in \Z^d} {\bk{\varphi_\alpha}_-} .
\]
Gronwall's inequality now yields non-negativity.
\end{proof}

\subsection{The backward dual problem}
We close this section with the simple remark that
Lemma \ref{lem:dual_phi_bounds} also applies to the backward equation
\begin{equation}\label{eq:dual_discrete}
\begin{cases}\displaystyle
	\frac{\d}{\d t}  \varphi_\alpha + \frac12 \sum_{j = 1}^d
	c_\alpha^j \nabla_+^j \nabla_-^j  \varphi_\alpha
	- Y_\alpha\phi_\alpha=0, & \alpha\in\Z^d,\ t \in [0,T), \\
	\varphi_\alpha(T) = 1, & \alpha\in\Z^d
\end{cases}
\end{equation}
under the transformation $t \mapsto T-t$.
In particular, the formula
\eqref{eq:duhamel_representation2} for
\eqref{eq:dual_discrete} becomes
\begin{equation}\label{eq:duhamel_representation_backward}
	\varphi_\alpha(t)
	= \sum_{\eta \in \Z^d} \Gamma_{\alpha, \eta}(T - t) \Dx^d
	- \int_t^T \sum_{\eta \in \Z^d} \Gamma_{\alpha, \eta}(T - s)
	Y_{\eta} \varphi_\eta (s) \Dx^d \,\d s.
\end{equation}

\section{Well-posedness and stability of the scheme}\label{sec:duality}
In this section we show that our numerical scheme \eqref{eq:scheme}
is well-defined and satisfies an $L^2$ stability estimate.

\subsection{Well-posedness}
We begin by verifying that our difference scheme is well-defined.

\begin{proposition}[Solution of the difference scheme]
Under Assumption \ref{ass:main_assumption}, there exists a unique
strong solution $\bm{u}$ to the semi-discrete system \eqref{eq:scheme},
in the sense of Definition \ref{def:discrete_soln}.
\end{proposition}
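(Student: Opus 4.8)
The plan is to recognise \eqref{eq:scheme} as a \emph{linear} It\^o SDE
\[
\d\bm{u}(t) = \mathcal{A}\bm{u}(t)\,\d t + \sum_{j=1}^d \mathcal{B}_j\bm{u}(t)\,\d W^j(t),
\qquad \bm{u}(0)=\bm{u}^0,
\]
posed in the separable Hilbert space $H\coloneqq L^2_\Dx(\R^d)$, where the drift $\mathcal{A}$ gathers $-D_V$ together with the discrete parabolic operator $\tfrac12\sigma_\alpha\sum_j\nabla_+^j\bigl(\average_-^j\sigma_\alpha\nabla_-^j\,\cdot\,\bigr)$, and $\mathcal{B}_j\bm{u}$ is the $j$-th component of $-S_\alpha\nabla_0\bm{u}$ (recall $S_\alpha$ is diagonal, cf.~\eqref{eq:scheme2}). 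The key observation---exactly the stochastic analogue of the deterministic Lemma \ref{lem:basic-existence}---is that, for fixed $\Dx>0$, each of $\mathcal{A}$ and $\mathcal{B}_j$ is a \emph{bounded} linear operator on $H$.

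First I would verify this boundedness. Since the shifts $f_\alpha\mapsto f_{\alpha\pm e_j}$ are isometries of $H$ and multiplication by a bounded grid function is a bounded operator, the first differences $\nabla_\pm^j,\nabla_0^j$ have operator norm $\lesssim\Dx^{-1}$ while the second difference $\nabla_+^j\nabla_-^j$ has norm $\lesssim\Dx^{-2}$. Because $V\in L^\infty$ gives $\sup_\alpha|V_\alpha|\le\|V\|_{L^\infty}$, and $\sigma\in W^{3,\infty}$ is bounded (so that the entries of $S_\alpha$ and the coefficients $\average_\pm^j\sigma_\alpha$ are uniformly bounded), one obtains $\|\mathcal{A}\|_{H\to H}\lesssim\|V\|_{L^\infty}\Dx^{-1}+\|\sigma\|_{L^\infty}^2\Dx^{-2}$ and $\|\mathcal{B}_j\|_{H\to H}\lesssim\|\sigma\|_{L^\infty}\Dx^{-1}$. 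These bounds degenerate as $\Dx\to0$, but this is immaterial here: the proposition asserts only solvability at fixed $\Dx$, whereas the $\Dx$-uniform control is the separate (and substantive) content of Theorem \ref{thm:disc_energyestm}.

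With bounded coefficients in hand I would construct the solution by a Banach fixed-point (Picard) argument on the space $\mathcal{X}$ of $H$-valued, $(\mathcal{F}_t)$-adapted processes equipped with $\|\bm{v}\|_{\mathcal{X}}^2=\Ex\sup_{t\in[0,T']}\|\bm{v}(t)\|_H^2$, via the map
\[
\Psi(\bm{v})(t) = \bm{u}^0 + \int_0^t \mathcal{A}\bm{v}(s)\,\d s
+ \sum_{j=1}^d\int_0^t \mathcal{B}_j\bm{v}(s)\,\d W^j(s).
\]
Applying the Cauchy--Schwarz inequality to the drift integral and the Burkholder--Davis--Gundy inequality (or the It\^o isometry) to the $H$-valued stochastic integrals, the operator bounds give $\|\Psi(\bm{v}_1)-\Psi(\bm{v}_2)\|_{\mathcal{X}}^2\le C\bigl(T'+(T')^2\bigr)\|\bm{v}_1-\bm{v}_2\|_{\mathcal{X}}^2$, with $C$ depending only on $\|\mathcal{A}\|,\|\mathcal{B}_j\|$; choosing $T'$ small makes $\Psi$ a contraction, whose unique fixed point solves the system on $[0,T']$. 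Since the contraction time $T'$ is independent of the initial datum, iterating over successive intervals of length $T'$ extends the solution to all of $[0,T]$ and yields $\bm{u}\in L^2(\Omega;C([0,T];H))$, which in particular furnishes the integrability and adaptedness required by Definition \ref{def:discrete_soln}; uniqueness in that class follows from the same contraction estimate together with Gronwall's inequality applied to $\Ex\|\bm{u}_1(t)-\bm{u}_2(t)\|_H^2$.

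Because the whole matter reduces to a linear SDE with bounded coefficients on a Hilbert space driven by finitely many Brownian motions, I do not expect a genuine obstacle. The only points needing care are the correct interpretation of the $H$-valued It\^o integral and the verification of its adaptedness and martingale property, both covered by the standard theory of Hilbert-space-valued stochastic integration (see, e.g., \cite{Chow:2015aa}). The real difficulty of the paper, the $\Dx$-independent energy bound, is deliberately left to the duality argument of the following subsection.
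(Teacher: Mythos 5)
Your argument is correct, but it takes a different route from the paper. The paper does not run a Picard iteration by hand: it invokes the variational well-posedness theorem of Liu and R\"ockner \cite[Theorem 4.2.4]{Liu:2015vb}, choosing the degenerate Gel'fand triple $\mathbb{V}=H=\mathbb{V}^*=L^2_\Dx(\R^d)$ and verifying hemicontinuity, weak monotonicity, coercivity (with $r=2$) and boundedness; these conditions hold precisely because, at fixed $\Dx$, the drift and diffusion maps are bounded linear operators with norms $\lesssim\|\bm{V}\|_{L^\infty}\Dx^{-1}+\|\bm{\sigma}\|_{L^\infty}^2\Dx^{-2}$ and $\lesssim\|\bm{\sigma}\|_{L^\infty}\Dx^{-1}$ --- the same $\Dx$-dependent bounds you compute. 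So the mathematical content is essentially identical (everything reduces to global Lipschitz continuity of the coefficients on $H$, with constants that blow up as $\Dx\to0$ but are irrelevant for fixed-grid solvability), and the difference is one of packaging: your contraction argument on $\Ex\sup_{t\le T'}\|\cdot\|_H^2$ with the Burkholder--Davis--Gundy inequality is self-contained and elementary, and it is the natural stochastic counterpart of the deterministic Lemma \ref{lem:basic-existence}; the paper's route is shorter on the page but outsources the work to a general monotone-operator theorem whose hypotheses are only trivially satisfied here. Both correctly defer the substantive, $\Dx$-uniform estimate to Theorem \ref{thm:disc_energyestm}. Your proof is complete as stated, provided (as you note) the $H$-valued stochastic integral against the finitely many Brownian motions $W^1,\dots,W^d$ is interpreted in the standard Hilbert-space sense, for which \cite{Chow:2015aa} or \cite{Liu:2015vb} suffices.
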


\begin{proof}
We apply the well-posedness theorem of
Liu and R\"ockner \cite[Theorem 4.2.4]{Liu:2015vb}
for infinite dimensional SDEs (see also
\cite[Theorem I.3.1]{Krylov:1981aa} and
\cite[Theorem 4.2.4]{Prevot:2007aa}). Given a
Gel'fand triple of Hilbert spaces
$\mathbb{V} \subset H \subset \mathbb{V}^*$, where $\mathbb{V}$
is reflexive and $\mathbb{V} \hookrightarrow H$
continuously and densely, \cite[Theorem 4.2.4]{Liu:2015vb} yields
existence and uniqueness of SDEs
 \[
 \d X = A(t,X)\,\d t + B(t,X)\,\d W,
 \]
for a cylindrical Wiener process $W$
with values in $C([0,T];\U)$ where
$\U$ is an abstract (real) Hilbert space,
and progressively measurable maps
$A\from \Omega \times [0,T] \times \mathbb{V}  \to \mathbb{V}^*$ and
$B\from \Omega \times [0,T] \times \mathbb{V} \to L_2(\U,H)$.
Moreover, $A$ and $B$ must satisfy:
\begin{itemize}
	\item[(i)] \textit{Hemicontinuity.}
	For all $u,v,w, \in \mathbb{V}$, $t \in[0,T]$,
	$\lambda \mapsto
 	\langle A(t, u + \lambda v),
 	w \rangle_{\mathbb{V}^*, \mathbb{V}}$ is continuous,
	$\mathbb{P}$-almost surely.

	\item[(ii)] \textit{Weak monotonicity.}
	Uniformly in $u,v \in \mathbb{V}$,
 	\begin{align*}
  		&2\langle A(t,u) - A(t, v) , u - v \rangle_{\mathbb{V}^*,\mathbb{V}}
 		+ \|B(t,u) - B(t,v)\|_{L^2(\U,H)}^2 \\
		&\qquad \lesssim \|u - v\|_H^2
	 \end{align*}
	$\mathbb{P}\otimes \d t$-almost surely.

	\item[(iii)] \textit{Coercivity.}
	There exists $r >1$,
	$C_1,C_2 >0$, and $h \in L^1(\Omega
	\times [0,T])$ such that,
 	uniformly over $v \in \mathbb{V}$ and $t \in [0,T]$,
	\[
	2 \langle A(t, v),v\rangle_{\mathbb{V}^*,\mathbb{V}} + \|B(t,v)\|_{L^2(\U,H)}^2
	\le C_1 \|v\|_{H}^2 - C_2 \|v\|_\mathbb{V}^r + h(t)
	\]
	$\mathbb{P}$-almost surely.

	\item[(iv)] \textit{Boundedness.} With $r$ as in (iii),
	there exists $C_3 \geq0$, and an
	adapted $g \in L^{r/(r - 1)}
	(\Omega \times [0,T])$ such that for
	all $v \in \mathbb{V}$ and $t \in [0,T]$,
	$\|A(t,v)\|_{\mathbb{V}^*} \le g(t) +
	C_3 \|v\|_{\mathbb{V}}^{r-1}$, $\mathbb{P}$-almost surely.
\end{itemize}

For our Gel'fand triple
we choose $\mathbb{V} = H = \mathbb{V}^* = L^2_\Dx(\R^d)$.
The map $A$ for us is
\[
A(t,\bm{u})
: \alpha \mapsto - D_V u_\alpha(t)
+ \frac12 \sigma_\alpha \sum_{j=1}^d \nabla_+^j
	\bigl(\average_-^j\sigma_\alpha\nabla_-^j u_\alpha\bigr).
\]
Letting $(e_1, e_2, \ldots )$ denote
an orthonormal basis for $\U$, our map $B$ acts as follows: 
\[
B(t, \bm{u}) : \alpha \mapsto 
- \sum_{j=1}^d S_\alpha^j\nabla_0^ju_\alpha e_j
\]
By linearity, hemicontinuity (i) is immediately satisfied.
By linearity, (ii) and (iii) are equivalent. Picking $r = 2$,
condition (iii) can be verified as follows:
\begin{align*}
	\langle A(t, \bm{u}), \bm{u}\rangle_{L^2(\R^d)}
	& \le \frac1{\Dx} \|\bm{{V}}\|_{L^\infty(\R^d)}
	 \|\bm{u}(t)\|_{L^2(\R^d)}^2
	 + \frac{1}{\Dx^2}\|\bm{\sigma}\|_{L^\infty(\R^d)}^2\|\bm{u}\|_{L^2(\R^d)}^2, \\
	\|B(t,\bm{u})\|_{L^2(\R^d)}^2
	& \le \frac1\Dx \|\bm{\sigma}\|_{L^\infty(\R^d)}
	\|\bm{u}(t)\|_{L^2(\R^d)}^2.
\end{align*}
For us, $h$ and $g$ in conditions (iii) and (iv),
respectively, are simply the zero function.
As $\mathbb{V}^* = L^2_{\Dx}(\R^d)$,
we find that (iv) is verified by the calculations
foregoing. This establishes the proposition.
\end{proof}

\subsection{Stability of the scheme}
In Section \ref{sec:tech_outline}, we outlined a
Holmgren-type duality approach for
establishing the $L^2$ stability of solutions
to \eqref{eq:stratonovich_eq}. In Section \ref{sec:greens_functions},
we analysed a dual difference scheme,
culminating with Lemma \ref{lem:dual_phi_bounds}
and that its solutions exhibit
spatial differences that are bounded independently of $\Dx$.
We now bring these ingredients
together, showing that the $L^2$ norm
of the difference approximations
\eqref{eq:scheme} can be estimated by
only an $L^p$ bound ($p>d$) on $\Div V$,
rather than the typically assumed $L^\infty$ bound.
The main result of this section is the following, which
proves part (i) of the main
theorem (Theorem~\ref{thm:main_conv}).

\begin{theorem}[$L^2$ stability]\label{thm:disc_energyestm}
If Assumption \ref{ass:main_assumption} holds, and if $\bm{u}$ is
computed by the scheme \eqref{eq:scheme}, then for every $t \in [0,T]$,
\begin{align}\label{eq:energy_estimate1}
	\Ex\sum_{\alpha \in \Z^d} \abs{u_\alpha(t)}^2 \Dx^d
	\leq C \sum_{\alpha \in \Z^d} |u^0_\alpha|^2\Dx^d,
\end{align}
where $C>0$ depends only on $T$, $\norm{\sigma}_{W^{2,\infty}(\R^d)}$,
$\norm{V}_{L^\infty(\R^d)}$, and $\mathopen\|\Div V\|_{L^p(\R^d)}$.
\end{theorem}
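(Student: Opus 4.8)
The plan is to realise the discrete Holmgren/duality argument sketched in Section \ref{sec:tech_outline}, reducing \eqref{eq:energy_estimate1} to the uniform dual bounds of Lemma \ref{lem:dual_phi_bounds}. First I would derive an energy inequality for the mean energy $E_\alpha(t)\coloneqq\Ex\abs{u_\alpha(t)}^2$. Applying the It\^o formula to $u_\alpha^2$ along \eqref{eq:scheme1} and taking expectations annihilates the martingale term and leaves the quadratic-variation contribution $\sum_{j=1}^d (S_\alpha^j)^2\Ex\abs{\nabla_0^j u_\alpha}^2$ (the $W^j$ being independent). Combining this with the expectation of the discrete parabolic term and invoking the structural inequality $\mathcal{E}^\sigma_\alpha\le0$ (the $d$-dimensional analogue of \eqref{eq:scheme2-intro}) allows me to replace these two noise contributions by $\tfrac12\sigma_\alpha\sum_{j}\nabla_+^j\bigl(\average_-^j\sigma_\alpha\nabla_-^j E_\alpha\bigr)$ up to a non-positive remainder. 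For the transport term I would apply the upwind chain rule (Lemma \ref{lem:discrete_calculus}\ref{lem:discrete_calculus_upwind-chain-rule}) with $\beta(u)=u^2$, whose second derivative is non-negative, so that $2u_\alpha D_V u_\alpha = D_V(u_\alpha^2)+(\text{non-negative artificial diffusion})$; dropping the latter and taking expectations yields
\[
\frac{\d}{\d t} E_\alpha \le -D_V E_\alpha + \tfrac12\sigma_\alpha\sum_{j=1}^d\nabla_+^j\bigl(\average_-^j\sigma_\alpha\nabla_-^j E_\alpha\bigr).
\]

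Next I would fix $\tau\in(0,T]$ and introduce the dual grid function $\bm\phi\ge0$ solving the backward reaction--diffusion problem \eqref{eq:dual_discrete} on $[0,\tau]$ with terminal datum $\phi_\alpha(\tau)=1$, where the diffusion coefficient $c_\alpha^j$ is chosen to match the second-order part of the transferred $\sigma$-operator (so $c_\alpha^j\approx\sigma_\alpha^2$, uniformly positive and Lipschitz in the index) and the reaction coefficient is $Y_\alpha=-D_V'(1)_\alpha$. Differentiating the pairing $\langle E,\phi\rangle=\sum_\alpha E_\alpha\phi_\alpha\Dx^d$, using the energy inequality against the non-negative $\phi$, and summing by parts (Lemma \ref{lem:discrete_calculus}\ref{lem:discrete_calculus_int-by-parts} and \ref{lem:discrete_calculus_upwind-int-by-parts}) transfers $D_V$ and the discrete $\sigma$-Laplacian onto $\phi$. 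By the choice of $c$, the leading second-order part of the transferred $\sigma$-operator cancels against $\tfrac12\sum_j c_\alpha^j\nabla_+^j\nabla_-^j\phi_\alpha$, while the upwind product rule $D_V'\phi_\alpha=\phi_\alpha D_V'(1)_\alpha+\sum_j\bigl(\nabla_+^j\phi_\alpha\,V^{j,+}_{\alpha+e_j}-\nabla_-^j\phi_\alpha\,V^{j,-}_{\alpha-e_j}\bigr)$ (Lemma \ref{lem:discrete_calculus}\ref{lem:discrete_calculus_upwind-product-rule}) shows that $\phi_\alpha D_V'(1)_\alpha$ is annihilated by $Y_\alpha\phi_\alpha$. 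What survives in the bracket is a discrete error of the form $\abs{\mathrm{error}_\alpha}\lesssim_{\sigma,V}\bigl(\phi_\alpha+\sum_j\abs{\nabla_\pm^j\phi_\alpha}\bigr)$, the coefficients being the discrete analogues of $\Delta\sigma^2$, $\nabla\sigma^2$ (bounded in $L^\infty$ by $\norm{\sigma}_{W^{2,\infty}}$) and $V$ (bounded in $L^\infty$).

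Using $E_\alpha\ge0$ together with the $\Dx$-independent bounds $\norm{\bm\phi}_{L^\infty}\le C$ and $\norm{\nabla_\pm\bm\phi}_{L^\infty}\le C$ from Lemma \ref{lem:dual_phi_bounds}---whose constant depends on $\norm{\bm Y}_{L^p_\Dx}\lesssim 1+\norm{\Div V}_{L^p}$ (via the discrete-divergence estimate of Lemma \ref{lem:V_DV_discretelimits1}), on $T$, and on $\norm{\sigma}_{L^\infty}$---I obtain $\frac{\d}{\d t}\langle E,\phi\rangle\le C\sum_\alpha E_\alpha\Dx^d$. Integrating from $0$ to $\tau$ and using $\phi_\alpha(\tau)\equiv1$ and $\norm{\bm\phi(0)}_{L^\infty}\le C$ gives
\[
\sum_{\alpha\in\Z^d} E_\alpha(\tau)\Dx^d \le C\sum_{\alpha\in\Z^d}\abs{u^0_\alpha}^2\Dx^d + C\int_0^\tau\sum_{\alpha\in\Z^d} E_\alpha(t)\Dx^d\,\d t,
\]
and Gronwall's inequality yields \eqref{eq:energy_estimate1} with $C$ depending only on $T$, $\norm{\sigma}_{W^{2,\infty}}$, $\norm{V}_{L^\infty}$ and $\norm{\Div V}_{L^p}$. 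The main obstacle is the second step: executing the double summation-by-parts on the variable-coefficient $\sigma$-Laplacian and verifying that, after the cancellations engineered by the choices of $c$ and $Y$, \emph{every} surviving term is bounded by $\phi$ or $\nabla_\pm\phi$ multiplied by a coefficient lying in $L^\infty$ (the $\sigma$-terms) or $L^p$ (the $\Div V$-term)---so that no bound on $\norm{\Div V}_{L^\infty}$ is ever invoked. A secondary technical point is the justification of the It\^o computation and of the interchange of $\Ex$ with the infinite spatial summations, which rests on $\bm u\in L^2\bigl(\Omega\times[0,T];L^2_\Dx(\R^d)\bigr)$ and the boundedness of the scheme coefficients.
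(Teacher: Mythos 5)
Your proposal is correct and follows essentially the same route as the paper: Itô formula for $u_\alpha^2$, the structural inequality $\mathcal{E}^\sigma_\alpha\le 0$ and the upwind chain rule to discard the noise and transport remainders, duality against the non-negative solution of the backward scheme with $c_\alpha^j=\sigma_\alpha\average_+^j\sigma_\alpha$ and reaction coefficient $D_V'(1)_\alpha$, the bounds of Lemmas \ref{lem:dual_phi_bounds} and \ref{lem:V_DV_discretelimits1}, and Gronwall. The "main obstacle" you flag (checking that every term surviving the double summation by parts is controlled by $\bm\phi$ or $\nabla_\pm\bm\phi$ times an $L^\infty$ or $L^p$ coefficient) is exactly the computation the paper carries out in \eqref{eq:duality2} via the discrete product rule, and it closes as you anticipate.
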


\begin{proof}
Recall the formulation \eqref{eq:scheme}
of our numerical scheme. By the It\^o
formula applied to $u_\alpha^2$, we find
\begin{equation}\label{eq:ito_eq2}
\begin{split}
	\d u_\alpha^2
	&+ 2u_\alpha D_V u_\alpha\,\d t
	+ 2u_\alpha S_\alpha\nabla_0u_\alpha \cdot \d W\\
	&= u_\alpha\sigma_\alpha
	\sum_{j=1}^d \nabla_+^j\bigl(\average_-^j\sigma_\alpha
	\nabla_-^ju_\alpha\bigr)\,\d t
	+ |S_\alpha\nabla_0u_\alpha|^2 \,\d t \\
	&=\frac12\sigma_\alpha \sum_{j=1}^d
	 \nabla_+^j\bigl(\average_-^j\sigma_\alpha
	 \nabla_-^j u_\alpha^2\bigr)\, \d t
	 + \mathcal{E}_\alpha^\sigma \,\d t,
\end{split}
\end{equation}
where
\begin{align*}
	\mathcal{E}^\sigma_\alpha &\coloneqq u_\alpha\sigma_\alpha
	\sum_{j=1}^d \nabla_+^j\bigl(\average_-^j\sigma_\alpha
	\nabla_-^j u_\alpha\bigr)
	-\frac12\sigma_\alpha \sum_{j=1}^d \nabla_+^j
	\bigl(\average_-^j\sigma_\alpha\nabla_-^j
	u_\alpha^2\bigr)
	+ |S_\alpha\nabla_0u_\alpha|^2 \\
	&= |S_\alpha\nabla_0u_\alpha|^2 \\
	&\relspace + \sigma_\alpha\sum_{j=1}^d
	\biggl( u_\alpha \frac{\average_+^j\sigma_\alpha\nabla_+^j u_\alpha
	- \average_-^j\sigma_\alpha\nabla_-^j u_\alpha}{\Dx}
	- \frac{\average_+^j\sigma_\alpha\nabla_+^j u_\alpha^2
	- \average_-^j\sigma_\alpha \nabla_-^j u_\alpha^2}{2\Dx}\biggr) \\
	& = |S_\alpha\nabla_0u_\alpha|^2\\
	&\relspace +\sigma_\alpha\sum_{j=1}^d
	\biggl( \frac{\average_+^j\sigma_\alpha
	\bigl(u_\alpha\nabla_+^j u_\alpha
	-\tfrac12\nabla_+^j u_\alpha^2\bigr)
	- \average_-^j\sigma_\alpha\bigl(u_\alpha\nabla_-^j u_\alpha
	- \tfrac12 \nabla_-^j u_\alpha^2\bigr)}{\Dx}\biggr)  \\
	&= -\frac{\sigma_\alpha}{2}\sum_{j=1}^d
	\biggl( \average_+^j\sigma_\alpha\bigl(\nabla_+^j u_\alpha\bigr)^2
	+ \average_-^j\sigma_\alpha\bigl(\nabla_-^j u_\alpha\bigr)^2\biggr)
	+ |S_\alpha\nabla_0u_\alpha|^2 \\
	&= -\frac{\sigma_\alpha}{2}\sum_{j=1}^d
	\Biggl( \average_+^j\sigma_\alpha\bigl(\nabla_+^j u_\alpha\bigr)^2
	+ \average_-^j\sigma_\alpha\bigl(\nabla_-^j u_\alpha\bigr)^2
	- 4\frac{\average^j_+\sigma_\alpha\average^j_-\sigma_\alpha}
	{\average^j_+\sigma_\alpha+\average^j_-\sigma_\alpha}
	\bigl(\nabla_0^j u_\alpha\bigr)^2\Biggr).
\end{align*}
Writing $\nabla_0^j u_\alpha
=\tfrac12(\nabla_+^ju_{\alpha}+\nabla_-^j u_\alpha)$,
we apply the elementary inequality
$(a + b)^2 \le (1 + \ep) a^2 + (1 + 1/\ep) b^2$
for any $\ep > 0$ to get
\begin{align*}
	& 4\frac{\average^j_+\sigma_\alpha\average^j_-\sigma_\alpha}
	{\average^j_+\sigma_\alpha+\average^j_-\sigma_\alpha}
	\bigl(\nabla_0^j u_\alpha\bigr)^2
	\\ & \qquad
	\leq \frac{\average^j_+\sigma_\alpha\average^j_-\sigma_\alpha}
	{\average^j_+\sigma_\alpha+\average^j_-\sigma_\alpha}
	\Bigl(\bigl(1+\eps\bigr)\bigl(\nabla_+^j u_\alpha\bigr)^2
	+ \bigl(1+\eps^{-1}\bigr)\bigl(\nabla_-^j u_\alpha\bigr)^2 \Bigr) \\
	\intertext{(\textit{setting} $\eps =
	\frac{\average_+^j\sigma_\alpha}{\average_-^j\sigma_\alpha}$)}
	& \qquad
	= \average_+^j\sigma_{\alpha}\bigl(\nabla_+^j u_\alpha\bigr)^2
	+ \average_-^j\sigma_\alpha\bigl(\nabla_-^j u_\alpha\bigr)^2.
\end{align*}
Inserting this into the computation of $\mathcal{E}_\alpha^\sigma$
and using the fact that $\sigma$ is strictly positive, we conclude that
\begin{align}\label{eq:error_main}
\mathcal{E}^\sigma_\alpha \leq 0.
\end{align}

Taking expectation in \eqref{eq:ito_eq2},
we find (in differential form) the ODE
\begin{align*}
	\frac{\d}{\d t}  \Ex u_\alpha^2
	&= -\Ex\bigl( D_V (u_\alpha^2)\bigr) + \frac12\sigma_\alpha \sum_{j=1}^d
	 \nabla_+^j\bigl(\average_-^j\sigma_\alpha\nabla_-^j \Ex u_\alpha^2\bigr)
	 +\Ex \mathcal{E}^\sigma_\alpha + \Ex \mathcal{E}^V_\alpha
\end{align*}
where
\begin{align}\label{eq:error_3}
	\mathcal{E}^V_\alpha
	&\coloneqq D_V (u_\alpha^2)
	- 2u_\alpha D_V u_\alpha
	\leq 0,
\end{align}
the inequality following from Lemma
\ref{lem:discrete_calculus} \ref{lem:discrete_calculus_upwind-chain-rule}.
Multiplying by a non-negative
${\bm \varphi} \in C^1([0,T];L^\infty_\Dx(\R^d))$ and
then integrating in time and summing over $\alpha$, we get
\begin{align*}
	&\nqquad - \int_0^T \sum_{\alpha \in \Z^d}
	\frac{\d}{\d t} \varphi_\alpha \Ex\bigl(u_\alpha^2\bigr) \,\d t
	+ \sum_{\alpha \in \Z^d}
	\varphi_\alpha \Ex\bigl(u_\alpha^2\bigr)\bigg|_{t=0}^T \\
	&\le - \int_0^T \sum_{\alpha \in \Z^d}
	\varphi_\alpha D_V \Ex\bigl(u_\alpha^2\bigr) \,\d t\\
	&\relspace + \frac12 \int_0^T\sum_{\alpha \in \Z^d}
	\varphi_\alpha \sigma_\alpha \sum_{j=1}^d
	\nabla_+^j\bigl(\average_-^j\sigma_\alpha
	\nabla_-^j \Ex \bigl( u_\alpha^2\bigr) \bigr)\, \d t.
\end{align*}
Applying the summation by parts
rules in Lemma \ref{lem:discrete_calculus}
\ref{lem:discrete_calculus_int-by-parts},
\ref{lem:discrete_calculus_upwind-int-by-parts}
and using \eqref{eq:error_main}
and \eqref{eq:error_3}, we obtain
\begin{equation}\label{eq:duality1}
\begin{split}
	\sum_{\alpha \in \Z^d} \varphi_\alpha
	\Ex\bigl(u_\alpha^2\bigr) \bigg|_0^T \leq  \int_0^T\sum_{\alpha \in \Z^d}
	\bigg(\begin{aligned}[t]&\frac{\d}{\d t}  \varphi_\alpha
	+ D_V' \varphi_\alpha \\
	&+\frac12  \sum_{j=1}^d \nabla_+^j \bigl(
	\average_-^j\sigma_\alpha \nabla_-^j
	\bigl(\sigma_\alpha \varphi_\alpha  \bigr)\bigr) \bigg)
	\Ex\bigl(u_\alpha^2\bigr)  \,\d t,\end{aligned}
\end{split}
\end{equation}
where $D_V'$ is the dual upwind
operator \eqref{eq:dual_upwind}. From Lemma \ref{lem:discrete_calculus}~\ref{lem:discrete_calculus_product-rule}, we have
\begin{align*}
	\nabla_+^j \bigl(
	\average_-^j\sigma_\alpha \nabla_-^j
	\bigl(\sigma_\alpha \varphi_\alpha  \bigr)\bigr)
	& = \nabla_+^j \bigl(
	\average_-^j\sigma_\alpha \nabla_-^j
	\sigma_\alpha \varphi_\alpha  \bigr)
	+ \nabla_+^j \bigl(\sigma_{\alpha - e_j}
	\average_-^j\sigma_\alpha
	 \nabla_-^j\varphi_\alpha \bigr) \\
	 &=  \nabla_+^j \bigl(
	\average_-^j\sigma_\alpha \nabla_-^j \sigma_\alpha
	\bigr)\varphi_\alpha
	 +  \average_+^j\sigma_\alpha \nabla_+^j
	 \sigma_\alpha \nabla_+^j \varphi_\alpha \\
	 &\quad\,\,	+ \nabla_+^j \bigl(\sigma_{\alpha - e_j}
	 \average_-^j \sigma_\alpha \bigr) \nabla_-^j \varphi_\alpha
	+  \sigma_\alpha \average_+^j\sigma_\alpha
	 \nabla_+^j  \nabla_-^j\varphi_\alpha .
\end{align*}
(This is a discrete version of the product rule
applied to $\partial_{x^j}(\sigma\partial_{x^j}(\sigma\phi))$.)
Putting this into \eqref{eq:duality1} then yields
\begin{equation}\label{eq:duality2}
\begin{split}
	&\sum_{\alpha \in \Z^d} \varphi_\alpha
	\Ex\bigl(u_\alpha^2\bigr) \bigg|_0^T \\
	&\leq  \int_0^T\sum_{\alpha \in \Z^d} \bigg(\frac{\d}{\d t}  \varphi_\alpha
	+ D_V'(1)_\alpha\varphi_\alpha
	+ \frac12\sum_{j = 1}^d \sigma_\alpha \average_+^j\sigma_\alpha
	\nabla_+^j  \nabla_-^j \varphi_\alpha\bigg)
	\Ex\bigl(u_\alpha^2\bigr) \,\d t  \\
	&\relspace
	+ \int_0^T \sum_{\alpha \in \Z^d}
	\biggl(\begin{aligned}[t]&\bigl(D'_V(\phi_\alpha)
	- \phi_\alpha D_V'(1)_\alpha\bigr) +\frac12 \sum_{j = 1}^d\nabla_+^j \bigl(
	\average_-^j\sigma_\alpha \nabla_-^j \sigma_\alpha
	\bigr)\varphi_\alpha \\
	&+\frac12 \sum_{j = 1}^d
	\bigr( \average_+^j\sigma_\alpha \nabla_+^j
	 \sigma_\alpha \nabla_+^j \varphi_\alpha  + \nabla_+^j
	 \bigl(\sigma_{\alpha - e_j}
	 \average_-^j \sigma_\alpha \bigr) \nabla_-^j \varphi_\alpha\bigl)
	\biggr)\Ex\bigl(u_\alpha^2\bigr) \,\d t.\end{aligned}
\end{split}
\end{equation}

We now pick ${\bm \phi}$ to satisfy
the dual difference scheme
\begin{equation*}
\begin{cases}
\displaystyle
\frac{\d}{\d t}  \varphi_\alpha
	+ D_V'(1)_\alpha \varphi_\alpha
	+ \frac12\sum_{j = 1}^d \sigma_\alpha \average_+^j\sigma_\alpha
	\nabla_+^j  \nabla_-^j \varphi_\alpha=0, & \alpha\in\Z^d,\ t \in [0,T), \\
	\varphi_\alpha(T) = 1, & \alpha\in\Z^d,
\end{cases}
\end{equation*}
that is, \eqref{eq:duhamel_representation_backward} with
$Z = -D_V'(1)_\alpha$ and $c_\alpha^j
=\sigma_\alpha \average_+^j\sigma_\alpha$. Such a function ${\bm \varphi}$
exists and is non-negative by Lemma \ref{lem:dual_phi_bounds}. We obtain
\begin{equation*}
\begin{aligned}
	&\sum_{\alpha \in \Z^d} \varphi_\alpha \Ex\bigl(u_\alpha^2\bigr) \bigg|_0^T
	\\ & \qquad
	\le  \int_0^T \begin{aligned}[t]&\sum_{\alpha \in \Z^d} \Biggl(\bigl(D'_V\phi_\alpha
	- \phi_\alpha D_V'(1)_\alpha\bigr)+\frac12 \sum_{j = 1}^d
	 \nabla_+^j \bigl(\average_-^j\sigma_\alpha \nabla_-^j
	 \sigma_\alpha\bigr)\varphi_\alpha \\
	 & +\frac12 \sum_{j = 1}^d
	 \bigg( \average_+^j\sigma_\alpha \nabla_+^j
	 \sigma_\alpha \nabla_+^j \varphi_\alpha  + \nabla_+^j
	 \bigl(\sigma_{\alpha - e_j}
	 \average_-^j \sigma_\alpha \bigr) \nabla_-^j \varphi_\alpha\bigg)
	\Biggr)\Ex\bigl(u_\alpha^2\bigr) \,\d t.\end{aligned}
\end{aligned}
\end{equation*}
Using the product rule in Lemma
\ref{lem:discrete_calculus}
\ref{lem:discrete_calculus_upwind-product-rule}, we can estimate 
\begin{align*}
\bigl|D'_V\phi_\alpha - \phi_\alpha D_V'(1)_\alpha\bigr|
	&\lesssim \|\bm{V}\|_{L^\infty(\R^d)} \|\nabla_+ \bm{\varphi}\|_{L^\infty(\R^d)} ,
\end{align*}
where $\|\nabla {\bm \varphi}\|_{L^\infty (\R^d)} \lesssim C$ for a constant $C$ dependent on $\|D'_V(1)\|_{L^p(\R^d)}$ and $\bigl\|D_V'(1) - \Div V\|_{L^p(\R^d)} \to 0$ by the upcoming Lemma \ref{lem:V_DV_discretelimits1}.
Similarly,
\begin{align*}
 	\abs{ \sum_{j = 1}^d \bk{\average_+^j\sigma_\alpha \nabla_+^j
	 \sigma_\alpha \nabla_+^j \varphi_\alpha  + \nabla_+^j
	 \bigl(\sigma_{\alpha - e_j}
	 \average_-^j \sigma_\alpha \bigr) \nabla_-^j \varphi_\alpha}}
	 \lesssim \|\sigma\|_{W^{1,\infty}(\R^d)} 
     \|\nabla_+ \bm{\varphi}\|_{L^\infty(\R^d)},
\end{align*}
and, by the first bound in
Lemma \ref{lem:dual_phi_bounds},
\begin{align*}
	\sum_{j = 1}^d\nabla_+^j \bigl(
	\average_-^j\sigma_\alpha \nabla_-^j \sigma_\alpha
	\bigr)\varphi_\alpha
	\lesssim \|\sigma\|_{W^{2,\infty}(\R^d)}
	 \|\nabla_+ \bm{\varphi}\|_{L^\infty(\R^d)}.
\end{align*}
All together, these estimates yield
\begin{align*}
\Ex  \sum_{\alpha \in \Z^d} \varphi_\alpha(T)  u_\alpha(T)^2 \Dx^d
&\leq \sum_{\alpha \in \Z^d} \varphi_\alpha(0)  u_\alpha(0)^2 \Dx^d
+ C \Ex \sum_{\alpha \in \Z^d} \int_0^T u_\alpha(s)^2 \Dx^d \,\d s,
\end{align*}
where the constant $C$ depends only on $\sigma$
and $V$. Recalling that $\phi_\alpha(T)\equiv 1$ and that $\|\phi\|_{L^\infty} \lesssim 1$,
we can apply Gronwall's lemma and conclude the proof.
\end{proof}

\subsection{Alternative discretisation schemes}\label{sec:alternative-discretisation}
We have chosen the discretisation
scheme \eqref{eq:scheme} as one in a range
of possible ones. Let us discuss other possibilities
for discretising the noise term $\sigma \nabla u \circ \d W$.
To effectively derive an $L^2$ stability result,
a crucial aspect of any discretisation is that
the correction term arising from the It\^o
formula in the computation of the energy
$u^2_{\alpha}$ must be controlled effectively
by the dissipation provided by the discretised
Stratonovich-to-It\^o conversion term---the
last term in \eqref{eq:scheme1}.

Consider two ``abstract'' operators
$D_{\sigma, \Dx}^{(1)}$ and $D_{\sigma,\Dx}^{(2)}$
that approximate the first and second order
differential operators $\sigma \nabla u$  and
$\sigma \nabla \cdot \bk{\sigma \nabla u}$,
respectively, so that $\sigma \nabla u \cdot \d W
- \frac12 \sigma \,\nabla \cdot \bk{\sigma \nabla u}\,\d t$
in \eqref{eq:ito_eq} is approximated by
\begin{align}\label{eq:new-scheme}
	D_{\sigma, \Dx}^{(1)} u_\alpha \,\d W
	- \frac12 D_{\sigma,\Dx}^{(2)} u_\alpha\,\d t,
\end{align}
that is, the two final terms of the numerical
scheme \eqref{eq:scheme1}
is replaced by \eqref{eq:new-scheme}.
We now outline conditions---or design guidelines---that need
to be imposed on $D_{\sigma, \Dx}^{(1)}$
and $D_{\sigma,\Dx}^{(2)}$ to ensure
the $L^2$ stability of the resulting scheme.

The terms in the equation for $\d u_{\alpha}^2$
in which $\sigma$ is involved are given by
$$
u_\alpha D_{\sigma,\Dx}^{(1)} u_\alpha\,\d W
+  \frac12 \abs{D_{\sigma,\Dx}^{(1)} u_\alpha}^2\,\d t
- \frac12 u_{\alpha }D_{\sigma,\Dx}^{(2)} u\,\d t,
$$
where the middle term is the It\^o correction.
Let us also introduce an ``abstract'' dual
operator $D^{(2),*}_{\sigma,\Dx}$, formally defined by
\[
\sum_{\alpha \in \Z^d}
f_\alpha D^{(2), *}_{\sigma,\Dx} g_{\alpha}
=
\sum_{\alpha \in \Z^d}
g_\alpha D^{(2)}_{\sigma,\Dx} f_{\alpha}
\qquad \forall\ \bm{f},\bm{g}\in L^2_\Dx(\R^d).
\]
In the duality formulation, a test function satisfying
a dual difference scheme is introduced,
see \eqref{eq:duality1} and \eqref{eq:duality2}.
With the new abstract discretisation operators,
we require that for a discrete, non-negative function $\bm{\phi}$,
\begin{align}\label{eq:dual_condition1}
	\sum_{\alpha\in \Z^d}\varphi_{\alpha} \biggl(
	\abs{D_{\sigma,\Dx}^{(1)} u_\alpha}^2
	+ u_{\alpha }D_{\sigma,\Dx}^{(2)} u_\alpha
	- \frac12  D_{\sigma,\Dx}^{(2)}
	u_{\alpha}^2 \biggr) \le 0.
\end{align}
The continuous analogue of the foregoing
is identically zero for sufficiently smooth
functions. In the stability proof, we choose $\bm{\phi}$ to solve
the dual scheme that arises, which is a
(backward) parabolic difference scheme of the form
\begin{align}\label{eq:dual_condition2}
	0=\frac{\d}{\d t} \varphi_{\alpha}
	+\frac12 \tilde{D}_{\sigma,\Dx}^{(2),*} \varphi_{\alpha}
	+V_{\alpha} \cdot \nabla'_u \varphi_{\alpha},
	\quad \varphi_{\alpha}(T)=1, \quad \alpha\in \Z^d,
\end{align}
where $\tilde{D}_{\sigma,\Dx}^{(2),*}$
(with a tilde) is a possibly simpler operator
than $D_{\sigma,\Dx}^{(2),*}$; in \eqref{eq:duality2},
this simpler operator was obtained by
``differencing out'' the $\sigma$-related
coefficient from the second-order difference
operator in \eqref{eq:duality1}. More
generally, the simplified operator
$\tilde{D}_{\sigma,\Dx}^{(2),*}$
must be chosen so that the solution
$\varphi_{\alpha}$ of \eqref{eq:dual_condition2}
is non-negative; see Lemma \ref{lem:dual_phi_bounds}.
It may be that \eqref{eq:dual_condition1}
only holds for a particular class of
sequences, such as those that are non-negative,
or are uniformly bounded
away from zero. In this case, one would need to
choose $\tilde{D}_{\sigma,\Dx}^{(2)}$ so that
the solution to \eqref{eq:dual_condition2}
belongs to this class. We also require that
the simplified operator approximates the full
operator $D_{\sigma,\Dx}^{(2),*}$ when
applied to the solution $\bm{\varphi}$ of
\eqref{eq:dual_condition2}, in the sense that
$$
\norm{D_{\sigma,\Dx}^{(2), *}
{\bm \varphi }-\tilde{D}_{\sigma,\Dx}^{(2),*}
{\bm \varphi}}_{L^1([0,T];L^\infty(\R^d))}
\lesssim 1.
$$

The expression in the brackets in \eqref{eq:dual_condition1}
with $\sigma \equiv 1$, when set to nought, is
a discretised version of the fundamental commutator formula
\begin{align*}
	\beta'(u)\Delta u-\Delta \beta(u) = - \beta''(u)\abs{\nabla u}^2,
\end{align*}
where e.g.~$\beta(u)=\frac12 u^2$.
With $\sigma \equiv 1$,
this is easy to  replicate at the discrete level
for $\Delta\approx \nabla_-\nabla_+$
(both derivatives being in the same direction)
and $\nabla\approx \nabla_\pm$.
Here $\beta''(u) \abs{\nabla u}^2$ is the It\^o correction
term representing energy pumped into
the system by noise, and $\beta'(u)\Delta u$ is $\beta'(u)$ times the
Stratonovich-to-It\^o conversion term $\Delta u$. The
term $\Delta \beta(u)$ above integrates to nought
and does not enter the energy balance.
Specifying $\beta(u)=\frac12 u^2$ in
the formula \eqref{eq:2nd-order-commutator-beta}
gives exact cancellation as in the continuous setting:
\begin{equation*}
	u_\alpha\nabla_-\nabla_+ u_\alpha
	-\nabla_-\nabla_+ \left(\frac12 u_\alpha^2\right)
	=-\frac{1}{2}\abs{\nabla_+ u_\alpha}^2
	-\frac{1}{2}\abs{\nabla_- u_\alpha}^2.
\end{equation*}
For $\sigma \in W^{2,\infty}$ more generally,
our discretisation \eqref{eq:scheme} with
$$
D^{(1)}_{\sigma,\Dx} u_\alpha
= S_{\alpha} \nabla_\alpha u_{\alpha},
\qquad
D^{(2)}_{\sigma, \Dx} u_{\alpha}
=\frac12 \sigma_\alpha \sum_{j=1}^d \nabla_+^j
\bigl(\average_-^j\sigma_\alpha\nabla_-^j u_\alpha\bigr)
$$
achieves \eqref{eq:dual_condition1}, thanks to
\eqref{eq:error_main} and the non-negativity of
the solution $\varphi_\alpha$ to the dual
scheme (Lemma \ref{lem:dual_phi_bounds}).

Other methods for controlling the
It\^o correction term include incorporating
an artificial viscosity that diminishes
as the grid size $\Dx$ approaches zero.
This can be achieved through various discretisations
of $\nabla u \cdot \dot W-\frac12 \Delta u$, such as
\begin{align*}
	&\sqrt{1-\ep_\Dx} \nabla_0 u_\alpha \cdot \dot W
	 - \frac12\nabla_0^2 u_\alpha\\
	\intertext{or}
	&\nabla_0 u_\alpha \cdot \dot W -
	\frac{1}{8}\left(\nabla_+^2
	+2\bigl(1+2\ep_\Dx\bigr)\nabla_+\nabla_-
	+\nabla_-^2 \right)u_\alpha,
\end{align*}
where $\ep_\Dx$ is a positive parameter that vanishes
as $\Dx \to 0$.
Of course, in place of the central difference
$\nabla_0$, it is also possible retain $\nabla_\pm$
(and $\nabla_\pm \nabla_\mp$ for the discrete Laplacian).
It seems that in these schemes for which the simplest choice
$\tilde{D}^{(2),*}_{\sigma, \Dx} = D^{(2),*}_{\sigma, \Dx}$
is made, the solution of the dual scheme
\eqref{eq:dual_condition2} must be
uniformly bounded away from zero
on $[0,T] \times \Z^d$ in order to achieve
\eqref{eq:dual_condition1}.  Establishing a uniform lower bound
is a challenging yet attainable task,
requiring a weak Harnack inequality for difference schemes.

\subsection{Time discretisation}

Time discretisation falls outside
the scope of our present investigation, but
it naturally extends the discussions here.
The simplest time discretisation is the
Euler(--Maruyama) explicit approximation. In
the abstract discretisation notation and using
superscript to denote discrete times, the
discretisation $u^{n+1}_{\alpha}-u^n_{\alpha}$
of the It\^{o} differential $\d u$
includes a ``stochastic part'' composed of the following terms:
$$
D_{\sigma,\Dx}^{(1)} u_\alpha^n \bk{W^{n + 1}-W^n}
+ \frac12 D_{\sigma,\Dx}^{(2)} u_{\alpha}^n \Delta t,
$$
where $\Dt$ is the time step. We require that  the $\R^d$-valued
increments $W^{n+1}-W^n$ be independent and
centred Gaussian variables with variance
$\Ex\abs{ W^{n+1}-W^n}^2=d \Delta t$.
Implicit and explicit higher order methods
can involve multiple It\^o integrals, and can give higher
orders of convergence.  We firmly expect that the $L^2$
analysis remains applicable to (first-order) explicit
and implicit time discretisations.
An avenue for future research would be to explore
the potential of using the arguments of this paper
to establish $L^2$ stability for higher-order temporal
and spatial discretisations.

\section{Convergence of the scheme}\label{sec:convergence_of_scheme}

The Lax--Richtmyer theorem for
PDEs teaches us to expect that
consistent and stable numerical schemes
will converge to the exact solution.
We have successfully established the
stability criterion (the hard part).
The consistency requirement is straightforward, as illustrated in
the subsequent two lemmas. The remaining portion of
the section (see Lemma \ref{lem:weak_compactness}
and Proposition \ref{thm:equation_wklim}) is dedicated to
substantiating the convergence assertion.

\subsection{Consistency lemmas}

We present two lemmas showing the convergence
of specific discretised functions
towards their continuous counterparts.

\begin{lemma}[Consistency of velocity approximations]
\label{lem:V_DV_discretelimits1}
Suppose $V\in L^p(\R^d;\R^d)$ for $p\in [1,\infty)$.
\begin{enumerate}[label=(\roman*)]
	\item Let $\bm{V}_\Dx\coloneqq P_\Dx V$,
	with the projection operator $P_\Dx$
	defined in \eqref{eq:projection}. Then
	\[
	\hspace{10000pt minus 1fil}
	\bm{V}_\Dx \to V \qquad \text{pointwise a.e.~and
	in $L^p$ as $\Dx\to0$.} \hfilneg
	\]

	\item Assume also that $\partial_{x^j}V^j \in L^p(\R^d)$ for every
	$j=1, \ldots, d$, where $\partial_{x^j} V^j$ is the
	weak derivative of $V^j$ in the $j$th direction. Then
	\[
	\hspace{10000pt minus 1fil}
	 \sum_{\alpha \in \Z^d}D_V'(1)_\alpha \one{\cell_\alpha} \to \Div V \,\,
	\text{pointwise a.e.~and in $L^p$ as $\Dx\to0$,}\hfilneg
	\]
	where $D_V'(1)$ is
	given by \eqref{eq:dual_upwind2}.
	\item With the assumptions of (ii), for $\varphi \in C^1_c(\R^d)$,
	\[
	\hspace{10000pt minus 1fil}
	\sum_{\alpha \in \Z^d} D'_V(P_\Dx \varphi)_\alpha \one{\cell_\alpha}
		\to \varphi \Div V + V \cdot \nabla \varphi \,\,
	\text{pointwise a.e.~and in $L^p$ as $\Dx\to0$.}\hfilneg
	\]
\end{enumerate}
\end{lemma}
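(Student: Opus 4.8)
The plan is to reduce statement (iii) to the already-established parts (i) and (ii) by means of the discrete product rule for the dual upwind operator. First I would invoke Lemma~\ref{lem:discrete_calculus}\ref{lem:discrete_calculus_upwind-product-rule} with $g=P_\Dx\varphi$ (so that $g_\alpha=\varphi_\alpha:=\fint_{\cell_\alpha}\varphi$), and rewrite the divided differences of $\varphi_\alpha$ via the operators $\nabla_\pm^j$ of \eqref{eq:difference_operators}, to obtain the decomposition
\[
D_V'(P_\Dx\varphi)_\alpha = \varphi_\alpha\,D_V'(1)_\alpha + \sum_{j=1}^d\Bigl((\nabla_+^j\varphi_\alpha)\,V_{\alpha+e_j}^{j,+} - (\nabla_-^j\varphi_\alpha)\,V_{\alpha-e_j}^{j,-}\Bigr).
\]
The target limit $\varphi\Div V + V\cdot\nabla\varphi$ then splits accordingly: the first summand should produce $\varphi\Div V$, while the $j$-sum should produce $\sum_j\partial_{x^j}\varphi\,(V^{j,+}-V^{j,-})=V\cdot\nabla\varphi$. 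Note that, since $\varphi$ has compact support, every term is supported in a fixed compact neighbourhood of $\supp\varphi$ for $\Dx$ small, which keeps the $L^p$ bookkeeping clean.

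For the first summand I would use part (ii), which gives $\sum_\alpha D_V'(1)_\alpha\one{\cell_\alpha}\to\Div V$ pointwise a.e.\ and in $L^p$, together with the elementary fact that, since $\varphi\in C^1_c$, the interpolant $\sum_\alpha\varphi_\alpha\one{\cell_\alpha}$ is uniformly bounded and converges \emph{uniformly} to $\varphi$. The product of a uniformly bounded, uniformly convergent sequence with an $L^p$-convergent sequence converges in $L^p$ (and clearly converges a.e.), yielding $\varphi\Div V$. For the difference quotients, the identity $\nabla_+^j\varphi_\alpha=\fint_{\cell_\alpha}\int_0^1\partial_{x^j}\varphi(x+\theta\Dx e_j)\,d\theta\,dx$ together with the uniform continuity of $\nabla\varphi$ shows that $\sum_\alpha(\nabla_\pm^j\varphi_\alpha)\one{\cell_\alpha}\to\partial_{x^j}\varphi$ uniformly, with uniformly bounded interpolants.

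The remaining and most delicate ingredient is the convergence of the \emph{shifted} coefficients $\sum_\alpha V_{\alpha\pm e_j}^{j,\mp}\one{\cell_\alpha}$, which is where I expect the only real care to be needed. I would argue in two steps. The unshifted parts $\sum_\alpha V_\alpha^{j,\pm}\one{\cell_\alpha}$ converge to $V^{j,\pm}$ both a.e.\ and in $L^p$, by composing part (i) ($P_\Dx V\to V$) with the $1$-Lipschitz maps $t\mapsto t^\pm$. The shift by one cell is a translation by $\Dx e_j$; for the $L^p$ statement I would combine this with continuity of translations in $L^p$, and for the pointwise statement I would observe that $V_{\alpha+e_j}^{j,+}=\bigl(\fint_{\cell_{\alpha+e_j}}V^j\bigr)^+$, where the adjacent cells $\cell_{\alpha+e_j}$ form a family shrinking regularly to a.e.\ $x\in\cell_\alpha$ (they lie in $B(x,(\sqrt d+1)\Dx)$ with measure a fixed fraction of that ball), so the Lebesgue differentiation theorem gives $\fint_{\cell_{\alpha+e_j}}V^j\to V^j(x)$ a.e., whence $V_{\alpha+e_j}^{j,+}\to V^{j,+}(x)$ a.e.\ by continuity of $t\mapsto t^+$. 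Multiplying the uniformly convergent difference quotients against these a.e.- and $L^p$-convergent coefficients (again via the product-of-limits principle) and adding the two resulting contributions gives $V\cdot\nabla\varphi$; combining with the first summand then yields the stated limit $\varphi\Div V+V\cdot\nabla\varphi$, in both senses.
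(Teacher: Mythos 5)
Your argument for part (iii) is correct and follows essentially the same route as the paper: the product rule of Lemma \ref{lem:discrete_calculus}\ref{lem:discrete_calculus_upwind-product-rule}, uniform convergence of the difference quotients of $P_\Dx \varphi$, and $L^p$/a.e.\ convergence of the coefficients $V^{j,\pm}_{\alpha\pm e_j}$. Your handling of the one-cell shift (continuity of translations in $L^p$ for the norm statement, and the Lebesgue differentiation theorem over a regularly shrinking family of adjacent cells for the pointwise statement) is a perfectly acceptable substitute for the paper's device, which instead writes $V^{j,+}_{\alpha+e_j}-V^{j,+}_\alpha=\Dx\,\nabla_+^j V^{j,+}_\alpha$ and uses the $L^p$-boundedness of $\nabla_+^j V^{j,+}$ obtained in the proof of (ii).

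The gap is that the lemma has three parts and you prove only the third. Part (i) is routine (Lebesgue differentiation plus an approximation by mollified truncations), but part (ii) is the substantive claim of the lemma and cannot simply be taken as ``already established'': $D_V'(1)_\alpha$ is built from the \emph{one-sided} quantities $V^{j,\pm}$, so one must show $\nabla_+^j V^{j,+}_\alpha\to\partial_{x^j}V^j\,\one{\{V^j\geq 0\}}$ (and its negative-part analogue), not merely $\nabla_+^j V^j_\alpha\to\partial_{x^j}V^j$. The difficulty sits on the cells where $V^j_\alpha$ and $V^j_{\alpha+e_j}$ have opposite signs: there the difference quotient of $V^{j,+}$ is not a difference quotient of $V^j$, and the paper controls the resulting error terms by showing that at a.e.\ Lebesgue point they are, for small $\Dx$, only active on $\{V^j=0\}$, and then invoking the fact that $\partial_{x^j}V^j=0$ a.e.\ on $\{V^j=0\}$ (see \cite[Theorem 3.1]{Bouchut:2001zr}). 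This sign-splitting argument---which is also the reason the paper needs the componentwise hypothesis $\partial_{x^j}V^j\in L^p$ rather than just $\Div V\in L^p$---is the heart of the lemma and is entirely absent from your proposal; without it, the term $\varphi_\alpha D_V'(1)_\alpha$ in your own decomposition of (iii) has no established limit.
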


\begin{proof}[Proof of (i)]
The fact that $\bm{V}_\Dx\to V$ a.e.~follows
from Lebesgue's differentiation theorem
(see, e.g.,~\cite[Theorem 3.21]{Folland:1999aa}).
Let $\cell(x)$ be the cell in which a point $x\in\R^d$ lies. Since $\cell(x)\subset B_r(x)$, where $r=\sqrt{d}\Dx$, we get
\begin{align*}
\|\bm{V}_\Dx-V\|_{L^p}^p &\leq \int_{\R^d}\fint_{\cell(x)}|V(y)-V(x)|^p\,\d y\,\d x \\
&\leq \frac{1}{|\cell_0|}\int_{\R^d}\int_{B_r(x)}|V(y)-V(x)|^p\,\d y\,\d x \\
&= \frac{1}{|\cell_0|}\int_{B_r(0)}\int_{\R^d}|V(x+z)-V(x)|^p\,\d x\,\d z
\\
&\lesssim \fint_{B_r(0)} \|V(\cdot+z)-V\|_{L^p(\R^d)}^p\,\d z,
\end{align*}
which vanishes as $r\simeq \Dx \to 0$.
\end{proof}

\begin{proof}[Proof of (ii)]
We have
\[
\frac{V^j(x + \Dx e_j) - V^j(x)}{\Dx}
= \fint_0^\Dx \partial_{x^j} V^j(x + h e_j) \,\d h.
\]
Taking the average of the above we find
\begin{align*}
	\nabla_+^j V^j_\alpha
	=\frac{V^j_{\alpha+ e_j} - V^j_{\alpha }}{\Dx}
	=\fint_{\cell_\alpha} \fint_0^\Dx \partial_{x^j} V^j(y + h e_j)\,\d h\,\d y.
\end{align*}
By an argument as in part (i) of the theorem,
$P_{\Dx } \partial_{x^j} V^j
\to \partial_{x^j} V^j$ a.e.~and in $L^p(\R^d)$.
For an $x\in\R^d$, let $\cell_\alpha$ be the unique box containing $x \in \R^d$. Then
\begin{align*}
	&\abs{P_{\Dx} \partial_{x^j} V^j(x) - \nabla_+^j V^j_\alpha(x) }\\
	&\qquad
 	=\abs{ \fint_{\cell_\alpha} \fint_0^\Dx
 	\partial_{x^j} V^j (y + he_j) - \partial_{x^j} V^j(y)\,\d h\,\d y}
	\le  I_1+I_2,
\end{align*}
where
\begin{align*}
	I_1 &\coloneqq\abs{\fint_{0}^\Dx \fint_{\cell_\alpha}
	 \partial_{x^j} V^j (y + h e_j) - \partial_{x^j} V^j (x ) \,\d y\,\d h}, \\
	I_2&\coloneqq\abs{\fint_{\cell_\alpha}
	 \partial_{x^j} V^j (x) - \partial_{x^j} V^j (y) \,\d y}.
\end{align*}
These two integrals tend separately to
nought. Indeed, by the Lebesgue differentiation
theorem, for a.e.~$x$ and in $L^p(\R^d)$, $I_2 \to 0$
as $\Dx \to 0$. In $I_1$, as $y$ varies over
$\cell_\alpha$, $z = y - x$ varies
over $\cell_{\alpha} - x$, which is contained in the box
$\mathcal{Q}$ of size $3^d \Dx^{d}$ consisting of the union of
all boxes whose closure intersect the closure of $\cell_0$.
Thus
\begin{align*}
	I_1 &\le \fint_0^{\Dx} \fint_{\mathcal{Q}} \abs{\partial_{x^j} V^j(x + z + he_j)
	-\partial_{x^j} V^j(x)}\,\d z\,\d h
	\\ & \le \underbrace{\fint_0^\Dx \,\d h}_{ = 1}
	\fint_{\mathcal{Q} \cup \mathcal{Q} + \Dx e_j}
	\abs{\partial_{x^j} V^j(x + z) - \partial_{x^j} V^j(x)}\,\d z.
\end{align*}
The second inequality holds as $z + he_j$
is contained in $\mathcal{Q} \cup \mathcal{Q} + \Dx e_j$ as
$z$ varies over $\mathcal{Q}$ and $h$ varies over $[0,\Dx]$.
By the Lebesgue differentiation theorem,
this final integral tends to nought for a.e.~$x$ and in $L^p(\R^d)$.
Therefore $\nabla_+^j V^j_\alpha$ converges
to $\partial_{x^j} V^j$
pointwise a.e.~and in $L^p(\R^d)$.

We shall show now that
\begin{equation}\label{eq:diffquot_upwind}
	\sum_{\alpha \in \Z^d} \frac{V^{j,\pm}_{\alpha \pm e_j}
	-V^{j,\pm}_\alpha}\Dx
	\one{\cell_\alpha}
	\to \pm \partial_{x^j} V^j \one{\{\pm V^j \ge 0\}}
	\quad \text{a.e.~in }\R^d.
\end{equation}
Let us only prove \eqref{eq:diffquot_upwind} for the ``$+$'' part,
that is, $\nabla_+^j V^{j,+}_\alpha \to \partial_{x^j} V^j \one{\{V^j \geq0\}}$.
Since $|\nabla_+^j V^{j,+}_\alpha| \leq |\nabla_+^j V^j_\alpha|$ pointwise,
and the latter converges in $L^p$, the pointwise
convergence \eqref{eq:diffquot_upwind} will imply $L^p$ convergence.
We decompose
\begin{equation*}
\begin{aligned}
	\frac{V^{j,+}_{\alpha + e_j} - V^{j, + }_\alpha}{\Dx}
	&=  \nabla_+^j V^j_\alpha\,
	\one{\{V^j_{\alpha + e_j}, V^j_\alpha \ge 0\}} \\
	&\relspace+ \frac{ V^j_{\alpha + e_j}}{\Dx}
	\one{\{V^j_{\alpha + e_j}\ge 0, \,V^j_\alpha < 0\}}
	- \frac{V^j_\alpha}{\Dx}
	\one{\{V^j_{\alpha + e_j} < 0, \,V^j_\alpha \ge 0\}} \\
	&\eqqcolon E^1_\alpha + E^2_\alpha + E^3_\alpha.
\end{aligned}
\end{equation*}
We claim that $E^1 \to \partial_{x^j} V^j \one{\{V_j\geq0\}}$
pointwise a.e.~as $\Dx\to0$, and that $E^2,E^3\to0$.

If $\bar x$ is a Lebesgue point for both $V^j$ and $\partial_{x^j} V^j$
then $V_\alpha^j, V_{\alpha+e_j}^j \to V^j(\bar{x})$
and $\nabla_+^j V^j_\alpha =
\frac{V_{\alpha+e_j}^j-V_\alpha^j}{\Dx} \to \partial_{x^j} V^j(\bar{x})$
as $\Dx\to0$, where $\alpha\in\Z^d$ is such
that $\bar{x}\in \cell_\alpha$. In particular, either $V^j(\bar{x})=0$,
or $V^j(\bar{x}) \neq 0$, in which case
$\one{\{V^j_{\alpha + e_j}\ge 0,\, V^j_\alpha < 0\}}=0$
for sufficiently small $\Dx$.
Hence,
\begin{align*}
	0 &\leq E^2(\bar{x}) = \sum_{\alpha \in \Z^d}
	\one{\cell_\alpha}(\bar{x}) E^2_\alpha
	\leq \sum_{\alpha \in \Z^d} \one{\cell_\alpha}(\bar{x})
	\Bigl|\frac{V_{\alpha+e_j}^j-V_\alpha^j}{\Dx}
	\Bigr| \one{\{V^j_{\alpha + e_j}\ge 0,\, V^j_\alpha < 0\}} \\
	&\leq |\partial_{x^j}V^j(\bar{x})| \one{\{V^j=0\}}(\bar{x}) + o(1)
\end{align*}
as $\Dx\to0$. But $\partial_{x^j} V^j = 0$ a.e.~on $\{V^j = 0\}$
(see e.g.~\cite[Theorem 3.1]{Bouchut:2001zr}), so we conclude
that $E^2 \to 0$ a.e.~as $\Dx\to0$. In a similar fashion
we find that $E^3 \to 0$, and that
$E^1 \to \partial_{x^j} V^j \one{\{V^j\geq0\}}$.
\end{proof}

\begin{proof}[Proof of (iii)]
By Lemma \ref{lem:discrete_calculus}
\ref{lem:discrete_calculus_upwind-product-rule}
we can decompose $D_V' \varphi_\alpha$ as
\begin{align*}
D_V'\varphi_\alpha
& = \varphi_\alpha D_V'(1)_\alpha + \frac1\Dx \sum_{j = 1}^d
V_{\alpha + e_j}^{j,+} \bk{\varphi_{\alpha + e_j} - \varphi_\alpha}
- V_{\alpha - e_j}^{j, - } \bk{\varphi_\alpha - \varphi_{\alpha - e_j}}.
\end{align*}
It is obvious that $P_\Dx \varphi \to \varphi$ in
$L^\infty(\R^d)$ and $D'_V(1)$ converges in $L^p(\R^d)$
by {\em (ii)}. Therefore, we can concentrate on the sum. We will prove
convergence for the positive part. For each fixed
$j = 1, \ldots, d$, we have the decomposition
\begin{align*}
	& \nqquad\Biggl\|{\frac1\Dx} \sum_{\alpha \in \Z^d}
	V_{\alpha + e_j}^{j,+} \bk{\varphi_{\alpha + e_j}
	- \varphi_\alpha} \one{\cell_\alpha}
	- V^{j,+} \partial_{x_j} \varphi\Biggr\|_{L^p(\R^d)}\\
	& \le  \Biggl\|\frac1\Dx \sum_{\alpha \in \Z^d}
	\bk{	V_{\alpha + e_j}^{j,+} - V^{j,+}}\bk{\varphi_{\alpha + e_j}
	- \varphi_\alpha} \one{\cell_\alpha}\Biggr\|_{L^p(\R^d)} \\*
	&\relspace +  \Biggl\| \sum_{\alpha \in \Z^d}
	V^{j,+} \bk{\frac1\Dx \bk{\varphi_{\alpha + e_j}
	- \varphi_\alpha} - \partial_{x^j} \varphi} \one{\cell_\alpha}\Biggr\|_{L^p(\R^d)}\\
	& \le \Biggl\|\sum_{\alpha \in \Z^d}
	\bk{	V_{\alpha }^{j,+} - V^{j,+}}\one{\cell_\alpha}\Biggr\|_{L^p(\R^d)}
	\Biggl\|\sum_{\alpha \in \Z^d} \frac{\varphi_{\alpha + e_j}
	- \varphi_\alpha}\Dx \one{\cell_\alpha}\Biggr\|_{L^\infty(\R^d)}\\*
	&\relspace + \Biggl\|\sum_{\alpha \in \Z^d}
	\bk{	V_{\alpha + e_j}^{j,+} - V_{\alpha}^{j,+}}\one{\cell_\alpha}\Biggr\|_{L^p(\R^d)}
	\Biggl\|\sum_{\alpha \in \Z^d} \frac{\varphi_{\alpha + e_j}
	- \varphi_\alpha}\Dx \one{\cell_\alpha}\Biggr\|_{L^\infty(\R^d)}\\*
	 &\relspace + \norm{V^{j,+}}_{L^p(\R^d)}
 	\Biggl\| \sum_{\alpha \in \Z^d}
	 \bk{\frac{\varphi_{\alpha + e_j}
	- \varphi_\alpha}\Dx - \partial_{x^j} \varphi} \one{\cell_\alpha}\Biggr\|_{L^\infty(\R^d)}.
\end{align*}
The second summand tends to nought by the proof of {\em (ii)}, and the
last summand tends to nought as $\partial_{x^j}\varphi \in C_b(\R^n)$.
It remains to show that the first factor of the
first summand converges. By the continuity of the positive part function,
taking pointwise limits $\Dx\to0$ yields
\begin{align*}
\lim_{\Dx \to 0}\bigl(P_\Dx V^j\bigr)^+
&= \lim_{\Dx \to 0}
\Biggl(\sum_{\alpha \in \Z^d}\one{\cell_\alpha}\fint_{\cell_{\alpha}} V^j\,\d x\Biggr)^+ \\
&=
\Biggl(\lim_{\Dx \to 0}
\sum_{\alpha \in \Z^d}\one{\cell_\alpha}\fint_{\cell_{\alpha}} V^j\,\d x\Biggr)^+
=  V^{j,+}.
\end{align*}
The convergence of the summand to zero
then follows from an application of the dominated convergence theorem.
\end{proof}

Lastly, we record the
following elementary convergences,
provided that $\sigma$ and $\varphi$
possess sufficient regularity. Their proofs are omitted for the sake of brevity.

\begin{lemma}[Consistency of $\sigma$-related approximations]
\label{lem:V_DV_discretelimits2}
Let $\sigma \in W^{2,\infty}(\R^d)$, and $\varphi \in C^2_c(\R^d)$.
Define $\varphi_\alpha \coloneqq \fint_{\cell_\alpha} \varphi\,\d x$ and
$\sigma_\alpha \coloneqq \fint_{\cell_\alpha} \sigma\,\d x$.
We have the following convergences pointwise and
in $L^1(\R^d)$ as $\Dx \to 0$:
\begin{align*}
	\sum_{\alpha \in \Z^d} \varphi_\alpha  \one{\cell_\alpha}
	& \to \varphi ,\\
	\sum_{\alpha \in \Z^d} \nabla_0
	\varphi_\alpha  \one{\cell_\alpha}
	&\to \nabla \varphi ,\\
	\sum_{\alpha \in \Z^d} \nabla_0
	\bk{S_\alpha \varphi_\alpha}  \one{\cell_\alpha}
	&\to \nabla \bk{\sigma \varphi},\\
	\sum_{\alpha \in \Z^d}   \sum_{j  = 1}^d
 	\nabla_+^j \bk{\average_-^j\sigma_\alpha \nabla_-^j
 	\bk{\sigma_\alpha \varphi_\alpha}}  \one{\cell_\alpha}
	& \to \nabla\cdot\bk{\sigma \nabla \bk{\sigma \varphi}}.
\end{align*}
Moreover, for $u^0 \in L^2(\R^d)$ and
$u^0_\alpha = \fint_{\cell_\alpha} u^0\,\d x$,
\[
\sum_{\alpha \in \Z^d} u^0_\alpha
\varphi_\alpha \one{\cell_\alpha}
\to u^0 \varphi \quad \text{in}
	\quad (L^1 \cap L^2)(\R^d)\quad \text{ as $\Dx \to 0$}.
\]
\end{lemma}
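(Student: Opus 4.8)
The plan is to reduce every convergence to finite-difference consistency on cell averages together with the Lebesgue differentiation theorem, treating the items in increasing order of difficulty; the only delicate point will be the second-order term, whose limit contains the merely bounded-measurable factor $\partial_{x^jx^j}\sigma$. For the zeroth-order statement, $\sum_\alpha\varphi_\alpha\one{\cell_\alpha}=P_\Dx\varphi$ and the uniform continuity of $\varphi\in C^2_c$ give $\|P_\Dx\varphi-\varphi\|_{L^\infty}\to0$, which upgrades to $L^1$ by compact support. For the first-order statements I record that, for $g\in C^{1,1}$, the cell-average error is uniformly second order, $g_\alpha=g(x_\alpha)+O(\Dx^2)$, the linear term vanishing by the symmetry of $\cell_\alpha$ about $x_\alpha$; consequently $\nabla_0^j g_\alpha=\tfrac1{2\Dx}\bigl(g(x_{\alpha+e_j})-g(x_{\alpha-e_j})\bigr)+O(\Dx)$ converges uniformly to $\partial_{x^j}g$ (and likewise $\nabla_\pm^j g_\alpha\to\partial_{x^j}g$). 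Taking $g=\varphi$ gives the second convergence.

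Since $\sigma\in W^{2,\infty}\hookrightarrow C^{1,1}$, the same first-order consistency applies to $\sigma$. Writing $\nabla_0^j=\tfrac12(\nabla_+^j+\nabla_-^j)$ and using the product rule (Lemma~\ref{lem:discrete_calculus}\ref{lem:discrete_calculus_product-rule}),
\[
\nabla_0^j(\sigma_\alpha\varphi_\alpha)
= \tfrac12\bigl(\sigma_{\alpha+e_j}\nabla_+^j\varphi_\alpha
+ \sigma_{\alpha-e_j}\nabla_-^j\varphi_\alpha\bigr)
+ \varphi_\alpha\nabla_0^j\sigma_\alpha,
\]
whose factors converge uniformly with limit $\sigma\,\partial_{x^j}\varphi+\varphi\,\partial_{x^j}\sigma=\partial_{x^j}(\sigma\varphi)$; this is the third convergence. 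The final ($u^0\varphi$) statement follows on writing the left-hand side as $(P_\Dx u^0)(P_\Dx\varphi)$ and splitting it as $(P_\Dx u^0-u^0)(P_\Dx\varphi)+u^0(P_\Dx\varphi-\varphi)$: since $P_\Dx u^0\to u^0$ in $L^2$ (Lemma~\ref{lem:V_DV_discretelimits1}(i)), $\|P_\Dx\varphi\|_{L^\infty}$ is bounded, $\|P_\Dx\varphi-\varphi\|_{L^\infty}\to0$, and all products live in a fixed compact set, Cauchy--Schwarz yields both $L^2$ and $L^1$ convergence.

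The heart of the matter is the second-order term. Applying the product rule twice (and using $\average_-^j\sigma_{\alpha+e_j}=\average_+^j\sigma_\alpha$ and $\nabla_+^j\average_-^j\sigma_\alpha=\nabla_0^j\sigma_\alpha$) I would put it in conservation form,
\[
\nabla_+^j\bigl(\average_-^j\sigma_\alpha\,\nabla_-^j(\sigma_\alpha\varphi_\alpha)\bigr)
= \average_+^j\sigma_\alpha\,\nabla_+^j\nabla_-^j(\sigma_\alpha\varphi_\alpha)
+ \nabla_0^j\sigma_\alpha\,\nabla_-^j(\sigma_\alpha\varphi_\alpha),
\]
and expand the discrete Laplacian of the product by the directly verified identity
\[
\nabla_+^j\nabla_-^j(\sigma_\alpha\varphi_\alpha)
= (\nabla_+^j\nabla_-^j\sigma_\alpha)\varphi_\alpha
+ \sigma_\alpha\nabla_+^j\nabla_-^j\varphi_\alpha
+ \nabla_+^j\sigma_\alpha\nabla_+^j\varphi_\alpha
+ \nabla_-^j\sigma_\alpha\nabla_-^j\varphi_\alpha.
\]
Every factor except $\nabla_+^j\nabla_-^j\sigma_\alpha$ converges uniformly to a continuous limit (in particular $\nabla_+^j\nabla_-^j\varphi_\alpha\to\partial_{x^jx^j}\varphi$ uniformly because $\varphi\in C^2$). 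The main obstacle is thus the single factor $\nabla_+^j\nabla_-^j\sigma_\alpha$, whose limit $\partial_{x^jx^j}\sigma$ is only bounded measurable. For this I would use the tent-weight representation, valid for a.e.\ $y$ since $\partial_{x^jx^j}\sigma\in L^\infty$,
\[
\sigma(y+\Dx e_j)-2\sigma(y)+\sigma(y-\Dx e_j)
= \int_{-\Dx}^{\Dx}(\Dx-|t|)\,\partial_{x^jx^j}\sigma(y+te_j)\,\d t,
\]
and average it over $\cell_\alpha$; as $\int_{-\Dx}^{\Dx}(\Dx-|t|)\,\d t=\Dx^2$, the quotient $\nabla_+^j\nabla_-^j\sigma_\alpha$ is an average of $\partial_{x^jx^j}\sigma$ against a unit-mass kernel supported in an $O(\Dx)$-neighbourhood of $x_\alpha$, hence converges to $\partial_{x^jx^j}\sigma(\bar x)$ at every Lebesgue point $\bar x$. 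Recombining the pieces yields $\nabla_+^j\bigl(\average_-^j\sigma_\alpha\nabla_-^j(\sigma_\alpha\varphi_\alpha)\bigr)\to\partial_{x^j}\bigl(\sigma\,\partial_{x^j}(\sigma\varphi)\bigr)$ a.e., and summing over $j$ gives $\nabla\cdot(\sigma\nabla(\sigma\varphi))$ a.e.

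It remains to upgrade the a.e.\ convergence to $L^1$. All discrete quantities are supported in a fixed compact neighbourhood of $\supp\varphi$ for $\Dx\le1$, and the expansions above, with the uniform bounds $\|\nabla_+^j\nabla_-^j\sigma_\alpha\|_{L^\infty}\le\|\partial_{x^jx^j}\sigma\|_{L^\infty}$ and $\|\nabla_0^j\sigma_\alpha\|_{L^\infty}\le\|\partial_{x^j}\sigma\|_{L^\infty}$ (and their $\varphi$-analogues), bound them uniformly in $\Dx$ by $C\|\sigma\|_{W^{2,\infty}}\|\varphi\|_{C^2}$; the dominated convergence theorem then delivers the $L^1$ claims. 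I expect the tent-weight, Lebesgue-point convergence of the lone rough factor $\nabla_+^j\nabla_-^j\sigma_\alpha$ to be the only nonroutine ingredient, the conservation-form Leibniz rearrangement being precisely what isolates it from the continuous, uniformly convergent factors.
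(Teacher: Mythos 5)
The paper omits the proof of this lemma entirely (``Their proofs are omitted for the sake of brevity''), so there is no argument of record to compare against; your proposal supplies a complete and correct proof. I verified the two discrete Leibniz identities you use (the conservation-form rearrangement via $\average_-^j\sigma_{\alpha+e_j}=\average_+^j\sigma_\alpha$ and $\nabla_+^j\average_-^j\sigma_\alpha=\nabla_0^j\sigma_\alpha$, and the expansion of $\nabla_+^j\nabla_-^j(\sigma_\alpha\varphi_\alpha)$), and the tent-kernel representation of the second difference of a $W^{2,\infty}$ function together with the Lebesgue-point argument is exactly the right way to handle the one genuinely rough factor $\nabla_+^j\nabla_-^j\sigma_\alpha$; the remaining factors converge uniformly as you say, and compact support plus uniform boundedness upgrades everything to $L^1$. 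The only caveat worth recording is that for the fourth convergence ``pointwise'' can only mean ``pointwise a.e.'' (the limit contains $\partial_{x^j x^j}\sigma\in L^\infty$), which is what your argument delivers and is clearly what the statement intends.
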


\subsection{Convergence and the limit equation}

Using the $\Dx$-independent stability estimate for the finite
difference approximations (Theorem \ref{thm:disc_energyestm}),
we can establish weak $L^2$ convergence (compactness)
of the solutions to the scheme \eqref{eq:scheme}.
This weak compactness, along with the consistency
Lemmas \ref{lem:V_DV_discretelimits1}
and \ref{lem:V_DV_discretelimits2}, implies that the
difference approximations converge to the
unique solution of the stochastic transport equation
\eqref{eq:stratonovich_eq}, thereby
confirming Theorem \ref{thm:main_conv}.

Recall that $\cell_\alpha$ denotes 
the cube $\alpha\Dx + [-\Dx/2,\Dx/2)^d$.
From the $L^2$ estimate \eqref{eq:energy_estimate1},
the Banach--Alaoglu theorem readily implies that:

\begin{lemma}[Convergence]
\label{lem:weak_compactness}
Assume that Assumption \ref{ass:main_assumption} holds,
and let $u_{\Dx} \in L^\infty([0,T]; L^2(\Omega\times \R^d)))$ be
the $x$-piecewise constant function defined by
\[
u_{\Dx}(\omega,t,x) = u_\alpha(\omega, t),
\quad\text{for}\,\, (\omega, t, x)
\in \Omega \times [0,T] \times \cell_\alpha,
\]
i.e., $ u_{\Dx} = \sum_{\alpha \in \Z^d}
u_\alpha \one{\cell_\alpha}$,
where $u_\alpha$
is computed by \eqref{eq:scheme}.
Then there exists an adapted $\overline{u}$
such that $u_{\Dx} \rightharpoonup \overline{u}$ in
$L^2(\Omega \times [0,T] \times \R^d)$
along a (non-relabelled) subsequence
as $\Dx \to 0$.
\end{lemma}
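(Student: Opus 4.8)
The plan is to read off a uniform $L^2$ bound from the stability estimate of Theorem~\ref{thm:disc_energyestm}, invoke weak sequential compactness of bounded sets in a Hilbert space to extract a weak limit, and finally confirm that the limit inherits adaptedness. The statement of the lemma already signals that this is essentially an application of Banach--Alaoglu; the only point requiring a moment's care will be the preservation of adaptedness under the weak limit.

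First I would convert the pointwise-in-time bound \eqref{eq:energy_estimate1} into a bound on the full norm over $\Omega\times[0,T]\times\R^d$. Recalling that $u^0_\alpha=\fint_{\cell_\alpha}u^0\,\d x$, Jensen's inequality gives
\[
\sum_{\alpha\in\Z^d}|u^0_\alpha|^2\Dx^d = \|P_\Dx u^0\|_{L^2(\R^d)}^2 \leq \|u^0\|_{L^2(\R^d)}^2,
\]
so the right-hand side of \eqref{eq:energy_estimate1} is bounded independently of $\Dx$. Since $u_\Dx$ is constant on each cell $\cell_\alpha$ (of volume $\Dx^d$), Tonelli's theorem together with \eqref{eq:energy_estimate1} yields
\[
\|u_\Dx\|_{L^2(\Omega\times[0,T]\times\R^d)}^2 = \int_0^T \Ex\sum_{\alpha\in\Z^d}|u_\alpha(t)|^2\Dx^d\,\d t \leq CT\,\|u^0\|_{L^2(\R^d)}^2,
\]
a bound that is uniform in $\Dx$. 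Next, since $H\coloneqq L^2(\Omega\times[0,T]\times\R^d)$ is a Hilbert space, hence reflexive, every bounded sequence in $H$ has a weakly convergent subsequence. Applying this along a sequence $\Dx\to0$ produces a (non-relabelled) subsequence and a limit $\overline{u}\in H$ with $u_\Dx\rightharpoonup\overline{u}$ in $H$.

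The hard part---such as it is---will be to verify that $\overline{u}$ is adapted. Each approximation $u_\Dx$ is adapted, since each $u_\alpha$ is adapted to $(\mathcal{F}_t)_{t\in[0,T]}$ by Definition~\ref{def:discrete_soln}(ii); equivalently, every $u_\Dx$ lies in the linear subspace $H_{\mathrm{ad}}\subset H$ consisting of classes admitting a progressively measurable representative. This subspace is strongly closed and convex, hence weakly closed by Mazur's theorem, so the weak limit $\overline{u}$ again lies in $H_{\mathrm{ad}}$ and thus admits an adapted version. This completes the extraction of the desired weakly convergent subsequence, and everything apart from the adaptedness step is immediate from the uniform bound and Banach--Alaoglu, exactly as the lemma statement anticipates.
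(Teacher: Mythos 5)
Your proposal is correct and follows essentially the same route as the paper: a uniform bound obtained from the stability estimate \eqref{eq:energy_estimate1} (with $\sum_\alpha|u^0_\alpha|^2\Dx^d\le\|u^0\|_{L^2}^2$ by Jensen), weak sequential compactness in the Hilbert space $L^2(\Omega\times[0,T]\times\R^d)$, and adaptedness of the limit because the adapted processes form a closed linear subspace, hence weakly closed. The paper delegates this last step to \cite[Remark 5.2.17]{Liu:2015vb}, which is exactly the Mazur-type argument you spell out.
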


The assertion of adaptedness in
Lemma \ref{lem:weak_compactness}
follows from the weak convergence
of adapted processes \cite[Remark 5.2.17]{Liu:2015vb}.
Once we have verified that the
limit $\bar{u}$ is a weak solution
of \eqref{eq:stratonovich_eq}, as
demonstrated in the proposition
below, the uniqueness of the weak
solution guarantees that the entire
sequence $(u_\Dx)_{\Delta x > 0}$
converges, not just a subsequence.
We are now in a position to prove part (ii) of Theorem \ref{thm:main_conv}.

\begin{proposition}[Limit equation]
\label{thm:equation_wklim}
Under Assumption \ref{ass:main_assumption}, there is
a version of the weak limit $\overline{u}$ from
Lemma \ref{lem:weak_compactness}, still called
$\overline{u}$, such that for any
$\varphi \in C^2_c(\R^d)$, and any $t \in [0,T]$,
\begin{equation}\label{eq:wk_limiteq}
\begin{aligned}
	0 &= \int_{\R^d} \bar{u}(t) \varphi\,\d x
	-  \int_{\R^d} {u}^0 \varphi\,\d x
	- \int_0^t \int_{\R^d} \bar{u} \varphi\nabla \cdot V
	+ \bar{u}V\cdot\nabla\varphi \,\d x \,\d s \\
	&\relspace
	- \int_0^t \int_{\R^d}  \bar{u} \nabla
	 \bk{\sigma \varphi} \,\d x \cdot \d W
	- \frac12 \int_0^t \int_{\R^d}
	 \bar{u} \nabla \cdot\bk{\sigma\,
	 \nabla \bk{\sigma \varphi}}\,\d x \,\d s,
\end{aligned}
\end{equation}
$\mathbb{P}$-almost surely. Moreover,
$t \mapsto \int_{\R^d} \overline{u}(t) \varphi \,\d x$ is continuous a.s.
\end{proposition}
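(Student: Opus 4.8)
The plan is to turn Definition~\ref{def:discrete_soln}(iii) into a fully discrete weak formulation, move every difference operator off $u_\alpha$ and onto the smooth test function by summation-by-parts, and then pass to the limit term-by-term, combining the weak $L^2$ convergence from Lemma~\ref{lem:weak_compactness} with the strong convergence of the resulting discrete coefficients provided by Lemmas~\ref{lem:V_DV_discretelimits1} and~\ref{lem:V_DV_discretelimits2}. First I would fix $\varphi\in C^2_c(\R^d)$, set $\varphi_\alpha=\fint_{\cell_\alpha}\varphi$, multiply the integral identity of Definition~\ref{def:discrete_soln}(iii) by $\varphi_\alpha\,\Dx^d$, and sum over $\alpha$. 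Applying Lemma~\ref{lem:discrete_calculus}\ref{lem:discrete_calculus_int-by-parts},\ref{lem:discrete_calculus_upwind-int-by-parts} (together with the self-adjointness of $g\mapsto\nabla_+^j(\average_-^j\sigma_\alpha\nabla_-^jg)$ and the anti-self-adjointness of $\nabla_0^j$) gives, for every $t\in[0,T]$ almost surely,
\begin{equation*}
\begin{aligned}
\Dx^d\sum_{\alpha\in\Z^d} u_\alpha(t)\varphi_\alpha
&= \Dx^d\sum_{\alpha\in\Z^d} u_\alpha^0\varphi_\alpha
+ \int_0^t \Dx^d\sum_{\alpha\in\Z^d} u_\alpha\, D_V'\varphi_\alpha\,\d s \\
&\relspace
+ \int_0^t \Dx^d\sum_{\alpha\in\Z^d} u_\alpha\sum_{j=1}^d\nabla_0^j\bigl(S_\alpha^j\varphi_\alpha\bigr)\,\d W^j \\
&\relspace
+ \frac12\int_0^t \Dx^d\sum_{\alpha\in\Z^d} u_\alpha\sum_{j=1}^d\nabla_+^j\bigl(\average_-^j\sigma_\alpha\nabla_-^j(\sigma_\alpha\varphi_\alpha)\bigr)\,\d s,
\end{aligned}
\end{equation*}
where $D_V'$ is the dual upwind operator \eqref{eq:dual_upwind}. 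Each grid function multiplying $u_\alpha$ is deterministic and compactly supported.

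Next I would record the limits of these coefficients. Lemma~\ref{lem:V_DV_discretelimits1}(iii) gives $\sum_\alpha D_V'\varphi_\alpha\one{\cell_\alpha}\to\varphi\Div V+V\cdot\nabla\varphi$ in $L^p$, and Lemma~\ref{lem:V_DV_discretelimits2} gives the convergence of the second-order term to $\nabla\cdot(\sigma\nabla(\sigma\varphi))$ and of $\nabla_0(\sigma_\alpha\varphi_\alpha)$ to $\nabla(\sigma\varphi)$; a short additional check shows that each entry $S_\alpha^j$ of \eqref{eq:scheme2} differs from $\sigma_\alpha$ by $O(\Dx)$ (since $\sigma\in W^{3,\infty}$, the arithmetic and harmonic means there agree with $\sigma_\alpha$ to this order), whence $\sum_\alpha\nabla_0^j(S_\alpha^j\varphi_\alpha)\one{\cell_\alpha}\to\partial_{x^j}(\sigma\varphi)$ as well. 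The deterministic (Bochner) integrals then pass to the limit by a weak--strong argument: for arbitrary $F\in L^\infty(\Omega)$ I would rewrite, e.g., $\Ex\bigl[F\int_0^t\Dx^d\sum_\alpha u_\alpha D_V'\varphi_\alpha\,\d s\bigr]$ as the $L^2(\Omega\times(0,T)\times\R^d)$ inner product of $u_{\Dx}$ against $(\omega,s,x)\mapsto F(\omega)\one{[0,t]}(s)\bigl(\sum_\alpha D_V'\varphi_\alpha\one{\cell_\alpha}\bigr)(x)$; the latter converges strongly there (using the compact support and the $L^p$ bound with $p>d$, with a routine truncation when $p<2$), so pairing with $u_{\Dx}\weak\bar u$ produces the corresponding continuous term. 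Since $F$ is arbitrary and the approximations are $L^2(\Omega)$-bounded by \eqref{eq:energy_estimate1}, this is weak $L^2(\Omega)$ convergence; the parabolic term and the endpoint/initial terms are handled identically (the initial term via Lemma~\ref{lem:V_DV_discretelimits2}).

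The step I expect to be the main obstacle is the martingale term, since only \emph{weak} $L^2$ convergence of $u_{\Dx}$ is available and stochastic integration does not commute with weak limits in general. The plan is to set $G_{\Dx}(s):=\Dx^d\sum_\alpha u_\alpha(s)\bigl(\nabla_0^j(S_\alpha^j\varphi_\alpha)\bigr)_{j=1}^d$, an $\R^d$-valued adapted process. Testing against an arbitrary adapted $H\in L^2(\Omega\times(0,T);\R^d)$ exactly as above (the deterministic factor converging strongly) shows $G_{\Dx}\weak G$ weakly in $L^2(\Omega\times(0,T);\R^d)$, with $G(s)=\int_{\R^d}\bar u(s)\nabla(\sigma\varphi)\,\d x$; moreover $G$ is adapted, because $\bar u$ is (Lemma~\ref{lem:weak_compactness}) and the closed subspace of adapted processes is weakly closed. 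The It\^o isometry exhibits $H\mapsto\int_0^T H\cdot\d W$ as a bounded linear, hence weakly continuous, map from this subspace into $L^2(\Omega)$; folding $\one{[0,t]}$ into the integrand yields $\int_0^t G_{\Dx}\cdot\d W\weak\int_0^t G\cdot\d W$ in $L^2(\Omega)$. Assembling all the weak $L^2(\Omega)$ limits in the displayed discrete identity—whose two sides coincide for every $\Dx$—gives \eqref{eq:wk_limiteq} as an identity in $L^2(\Omega)$, hence $\mathbb{P}$-almost surely, for each fixed $t$.

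Finally, for the continuity and the ``for all $t$'' upgrade, I would observe that the right-hand side of \eqref{eq:wk_limiteq}, as a function of $t$, is a sum of (time-continuous) Bochner integrals and an It\^o integral, so it is a continuous semimartingale. Declaring this continuous modification to be the definition of $t\mapsto\int_{\R^d}\bar u(t)\varphi\,\d x$ yields the asserted almost sure continuity and promotes the fixed-$t$ identity to one valid simultaneously for all $t\in[0,T]$. Carrying this out over a countable dense family of test functions in $C^2_c(\R^d)$, and using density together with the uniform $L^2$ bound, selects a single version $\bar u$ for which \eqref{eq:wk_limiteq} and the continuity statement hold for all $\varphi$ and all $t$, completing the proof.
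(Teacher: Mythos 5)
Your proposal is correct and follows essentially the same route as the paper: test the scheme with $\varphi_\alpha$, sum by parts onto the test function, pass to the limit term by term using the consistency lemmas together with a weak--strong pairing against an arbitrary $L^2(\Omega)$ random variable, handle the martingale term via weak continuity of the It\^o integral on the weakly closed subspace of adapted processes (the paper cites Frid--Karlsen--Marroquin for exactly this step, whereas you argue it directly), and finally select the version of $\bar u$ by a continuity-plus-separability argument over a countable dense family of test data. Your explicit check that $S_\alpha^j=\sigma_\alpha+O(\Dx)$, so that $\nabla_0^j(S_\alpha^j\varphi_\alpha)$ and $\nabla_0^j(\sigma_\alpha\varphi_\alpha)$ have the same limit, is a detail the paper's proof glosses over and is a worthwhile addition.
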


\begin{proof}
Recall from \eqref{eq:avg_defin} that
$u^0_\alpha = \fint_{\cell_\alpha} u^0\,\d x$.
Multiplying \eqref{eq:scheme1} through by
the piecewise constant approximation
$\varphi_\Dx=\sum_{\alpha \in \Z^d}
\varphi_\alpha \one{\cell_\alpha}$
of $\varphi \in C^2_c(\R^d)$, we have:
\begin{align*}
	0 &= \int_{\R^d}\sum_{\alpha \in \Z^d} u_\alpha (t)
 	\varphi_\alpha \one{\cell_\alpha}\,\d x
	-\int_{\R^d}\sum_{\alpha \in \Z^d} u_\alpha^0
 	\varphi_\alpha \one{\cell_\alpha}\,\d x\\
	&\relspace
 	-\int_0^t \int_{\R^d} \sum_{\alpha \in \Z^d}
	D_V' \varphi_\alpha
	 u_\alpha  \one{\cell_\alpha}\,\d x\,\d s\\
	 & \relspace -\int_0^t \int_{\R^d} \sum_{\alpha \in \Z^d}
 	 u_\alpha \one{\cell_\alpha}
 	\nabla_0 \bk{\varphi_\alpha S_\alpha }
 	\,\d x \cdot\d W\\
	& \relspace - \frac12 \int_0^t \int_{\R^d}
	\sum_{\alpha \in \Z^d} \sum_{j  = 1}^d
	u_\alpha \nabla_+^j \bk{\average_-^j\sigma_\alpha \nabla_-^j
 	\bk{\sigma_\alpha \varphi_\alpha}}\one{\cell_\alpha} \,\d x\,\d s\\
	& \eqqcolon I_1 + I_2 + I_3 + I_4 + I_5.
\end{align*}
In order to take weak limits in the
product space $\Omega \times [0,T] \times \mathbb{R}^d$, we
multiply this equation by an arbitrary random
variable $X \in L^2(\Omega)$ and take an expectation.

Based on the consistency results established in
Lemmas \ref{lem:V_DV_discretelimits1} and \ref{lem:V_DV_discretelimits2},
we have 
\begin{align*}
	\bigl(D_V'(\phi_\alpha) - \phi_\alpha D_V'(1)_\alpha\bigr)
	+ \varphi_\alpha D_V'(1)_\alpha
	&\to  V \cdot \nabla \varphi + \varphi \Div V &&\text{in } L^p_x, \\
	\sum_{j = 1}^d
	\nabla_+^j \bk{\average_-^j\sigma_\alpha \nabla_-^j
 	\bk{\sigma_\alpha \varphi_\alpha}}
	&\to \nabla \cdot \bk{\sigma \nabla \bk{\sigma \varphi}} &&\text{in } L^\infty_x
\end{align*}
as $\Dx\to0$.
Along with Lemma \ref{lem:weak_compactness},
these ensure the convergences of $\Ex \big[X I_3 \big]$,
and $\Ex \big[ X I_5\big]$ to their
continuous counterparts. The term $\Ex \big[X I_2\big]$ converges by Lemma~\ref{lem:V_DV_discretelimits2}.

As the convergence $u_\Dx \to \bar{u}$
is weak in $(\omega, t, x)$, we can use a
weak convergence argument
of \cite[pp.~15--16]{Frid:2021us} to get
\[
\int_0^T \bk{\int_{\R^d}
\phi_\Dx  S_{\Dx}
\nabla_0 u_\Dx \,\d x}
\cdot \,\d W
\rightharpoonup
-\int_0^T \bk{\int_{\R^d}
\bar{u}\nabla \bk{\phi \sigma}  \,\d x}
\cdot \,\d W
\]
in $L^2(\Omega)$, for any test function
$\phi \in L^\infty([0,T];H^1(\R^d))$
with $\phi_\Dx=\sum_{\alpha \in \Z^d}
\varphi_\alpha \one{\cell_\alpha}$ and
$S_\Dx=\sum_{\alpha \in \Z^d}
S_\alpha \one{\cell_\alpha}$.
This ensures the convergence of  $\Ex \big[X I_4\big]$.

Finally we consider $I_1$.
Since $I_1= -I_2-I_3-I_4-I_5$,
by the convergences established
for the right-hand side, it holds that
$\Ex \Big[ XI_1 \Big](t)$ converges to
a continuous function $F_{X, \varphi}(t)$.
On the other hand, using the convergence of $u_\Dx$
provided by Lemma \ref{lem:weak_compactness},
it holds that for any $\psi \in L^\infty([0,T])$,
\[
\Ex  \int_0^T X \psi I_1\,\d t
\to \Ex  \int_0^T\int_{\R^d}
X \psi\bar{u}  \varphi\,\d x \,\d t,
\]
whence $F_{X, \varphi}(t) = \Ex \int_{\R^d}
X  \bar{u}(t) \varphi\,\d x$
for a.e.~$t \in [0,T]$. The null set of times for
which the equality does not hold is dependent
on $(X, \varphi) \in L^2(\Omega) \times C^2_c(\R^d)$,
which is separable. This permits us to redefine
$\bar{u}$ on a universal null set of times
by doing so only for a countable dense
set of pairs $(X, \varphi)$. This establishes \eqref{eq:wk_limiteq}.
Since \eqref{eq:wk_limiteq} holds a.s.~for
all $t \in [0,T]$, and each temporal integral
in \eqref{eq:wk_limiteq} is time-continuous,
it holds that $\int_{\R^d} \bar{u}(t) \varphi\,\d x$
is also a.s.~time-continuous.
\end{proof}

\appendix

\section{Convolution estimates and discrete calculus} 
\label{sec:disccalc}
This section is dedicated to the proofs
of Lemma \ref{lem:young} and \ref{lem:discrete_calculus}.

\begin{proof}[Proof of Lemma \ref{lem:young}]
We realise each Lebesgue norm in terms of its dual formulation, e.g.
\[
\|\bm{f}\|_{L^p} = \sup_{\substack{\psi\in L^{p'}_\Dx \\ 
\|\psi\|_{L^{p'}} = 1}} \sum_{\alpha\in\Z^d} f_\alpha\psi_\alpha\,\Dx^d
\]
for $\bm{f}\in L^p_\Dx$. For 
$\bm{F}\in L^{p_2}_\alpha L^{p_3}_\beta$ 
and $\bm{G}\in L^{q_2}_\alpha L^{q_3}_\beta$ we have
\begin{align*}
\|(\bm{F}\dconv\bm{G})\|_{L^{p_2}_\alpha L^{q_3}_\beta}
&= \sup_{\|\psi\|_{L^{p_2'}} = 1} \sum_{\alpha \in \Z^d} \psi_\alpha 
	\Biggl(\sup_{\|\tilde\psi\|_{L^{p_3'}} = 1} 
	\sum_{\beta \in \Z^d} \tilde\psi_\beta \sum_{\eta \in \Z^d} 
	F_{\alpha,\eta} G_{\eta,\beta} \Biggr)\,\Dx^{3d} \\
&\leq \sup_{\|\psi\|_{L^{p_2'}} = 1} \sum_{\alpha \in \Z^d} \psi_\alpha 
	\sum_{\eta \in \Z^d} F_{\alpha,\eta} \Biggl(\sup_{\|\tilde\psi\|_{L^{p_3'}} = 1} 
	\sum_{\beta \in \Z^d} \tilde\psi_\beta G_{\eta,\beta} \Biggr)\,\Dx^{3d} \\
&= \sup_{\|\psi\|_{L^{p_2'}} = 1} \sum_{\alpha \in \Z^d} \psi_\alpha 
	\sum_{\eta \in \Z^d} F_{\alpha,\eta} \|G_{\eta,\cdot}\|_{L^{q_3}_\beta}\,\Dx^{2d} \\
\intertext{(\textit{by H\"older's inequality})}
&\leq \sup_{\|\psi\|_{L^{p_2'}} = 1} \sum_{\alpha \in \Z^d} 
	|\psi_\alpha| \|F_{\alpha,\cdot}\|_{L^{p_3}_\beta} 
	\|G\|_{L^{p_3'}_\alpha L^{q_3}_\beta}\,\Dx^d
\leq \|F\|_{L^{p_2}_\alpha L^{p_3}_\beta} 
	\|G\|_{L^{p_3'}_\alpha L^{q_3}_\beta},
\end{align*}
which proves (i) since $q_2=p_3'$. To prove (ii) 
we apply first Minkowski's inequality, then (i), 
and finally Young's convolution inequality in time to get
\begin{align*}
\|\bm{F}\dconv\bm{G}\|_{L^{r_1}_tL^{p_2}_\alpha L^{q_3}_\beta}
&= \Biggl(\int_0^T \biggl\|\int_0^t \bm{F}(t-s)\dconv 
	\bm{G}(s)\,\d s \biggr\|_{L^{p_2}_\alpha L^{q_3}_\beta}^{r_1}\,\d t \Biggr)^{1/r_1} \\
&\leq \Biggl(\int_0^T \biggl(\int_0^t \bigl\|
	\bm{F}(t-s)\dconv \bm{G}(s) \bigr\|_{L^{p_2}_\alpha L^{q_3}_\beta} 
	\,\d s \biggr)^{r_1}\,\d t \Biggr)^{1/r_1} \\
&\leq \Biggl(\int_0^T \biggl(\int_0^t \|\bm{F}(t-s)\|_{L^{p_2}_\alpha L^{p_3}_\beta} 
	\|\bm{G}(s) \bigr\|_{L^{q_2}_\alpha L^{q_3}_\beta} 
	\,\d s \biggr)^{r_1}\,\d t \Biggr)^{1/r_1} \\
&\leq \|\bm{F}\|_{L^{p_1}_t L^{p_2}_\alpha L^{p_3}_\beta} 
	\|\bm{G}\|_{L^{q_1}_t L^{q_2}_\alpha L^{q_3}_\beta},
\end{align*}
which holds since $1/p_1+1/q_1=1+1/r_1$.
\end{proof}

\begin{proof}[Proof of  Lemma
\ref{lem:discrete_calculus}
\ref{lem:discrete_calculus_product-rule}]
We show this for the forward difference:
\begin{align*}
	\nabla_+^j (h_\alpha g_\alpha)
    &= h_{\alpha + e_j} \bk{g_{\alpha + e_j} - g_{\alpha}}
    + \bk{h_{\alpha + e_j} - h_\alpha} g_\alpha\\
    &=  h_{\alpha + e_j} \nabla_+ g_\alpha
	+ g_\alpha \nabla_+ h_\alpha. \qedhere
\end{align*}
\end{proof}

\begin{proof}[Proof of  Lemma
\ref{lem:discrete_calculus}
\ref{lem:discrete_calculus_upwind-product-rule}]
From the definition \eqref{eq:dual_upwind}
of the dual upwind difference $D'_V g_\alpha$,
we have
\begin{align*}
	D_V'g_\alpha
	=&\ \frac{1}{\Dx}\sum_{j=1}^d \bigl(V_{\alpha+e_j}^{j,+}g_{\alpha+e_j}
	- V_\alpha^{j,+}g_\alpha\bigr) - \bigl(V_{\alpha}^{j,-}g_{\alpha}
	- V_{\alpha-e_j}^{j,-}g_{\alpha-e_j}\bigr)\\
    =&\ \frac{1}{\Dx}\sum_{j=1}^d \bigl(V_{\alpha+e_j}^{j,+}\bk{g_{\alpha+e_j} - g_{\alpha}}
	- V_\alpha^{j,+}g_\alpha\bigr) + V_{\alpha+e_j}^{j,+}g_\alpha\\
    &-  \frac{1}{\Dx}\sum_{j=1}^d \bigl(V_{\alpha}^{j,-}g_{\alpha}
	+ V_{\alpha-e_j}^{j,-}\bk{g_\alpha - g_{\alpha-e_j}}\bigr)
    - V_{\alpha-e_j}^{j,-}g_\alpha \\
    =&\ \frac{1}{\Dx}\sum_{j=1}^d \bigl(V_{\alpha+e_j}^{j,+}\bk{g_{\alpha+e_j} - g_{\alpha}}
	   +\bigl(V_{\alpha+e_j}^{j,+} - V_\alpha^{j,+}\bigr)g_\alpha\bigr)\\
    & -  \frac{1}{\Dx}\sum_{j=1}^d \bigl(
	V_{\alpha-e_j}^{j,-}\bk{g_\alpha - g_{\alpha-e_j}}\bigr)
    - \bigl(V_{\alpha-e_j}^{j,-} - V_{\alpha}^{j,-}\bigr)g_\alpha\bigr) \\
    =&\  g_\alpha D_V'(1)_\alpha
	+ \frac1{\Dx}\sum_{j=1}^d \bigl(g_{\alpha + e_j} - g_\alpha\bigr)
	V^{j, + }_{\alpha + e_j} - \bigl(g_\alpha - g_{\alpha - e_j}\bigr)
 	V^{j, - }_{\alpha - e_j}.
\end{align*}

\end{proof}
\begin{proof}[Proof of  Lemma
\ref{lem:discrete_calculus}
\ref{lem:discrete_calculus_differentiate-convolution}]
Using the definition \eqref{eq:convolution-def-univariate}
of the discrete convolution,
\begin{align*}
\nabla_\pm^j (\bm{f}\dconv \bm{g})_\alpha
&= \Dx^{d-1}\Biggl(\sum_{\beta\in\Z^d} f_{\alpha\pm e_j} g_\beta
-  \sum_{\beta\in\Z^d} f_{\alpha-\beta} g_\beta\Biggr)\\
&= \Dx^{d-1}\sum_{\beta\in\Z^d} \Bigl( f_{\alpha\pm e_j}
- f_{\alpha-\beta} \Bigr)g_\beta = \bigl((\nabla_\pm^j \bm{f})\dconv \bm{g}\bigr)_\alpha.
\end{align*}
Since $\bm{f}\dconv \bm{g} = \bm{g}\dconv \bm{f}$,
the other equality also holds.
\end{proof}

\begin{proof}[Proof of  Lemma
\ref{lem:discrete_calculus}
\ref{lem:discrete_calculus_int-by-parts}]
This follows from summing Lemma
\ref{lem:discrete_calculus}
\ref{lem:discrete_calculus_product-rule}
with boundary values ${\bm h \bm g} = 0$
at infinity.
\end{proof}
\begin{proof}[Proof of  Lemma
\ref{lem:discrete_calculus}
\ref{lem:discrete_calculus_upwind-int-by-parts}]
Again using the definition  \eqref{eq:dual_upwind}
of the dual upwind difference $D'_V g_\alpha$,
by shifting indices of summation, we find
\begin{align*}
	&\sum_{\alpha \in \Z^d} f_\alpha D_V'g_\alpha\\
	&= \frac{1}{\Dx}\sum_{\alpha \in \Z^d} f_\alpha
    \sum_{j=1}^d \bigl(V_{\alpha+e_j}^{j,+}g_{\alpha+e_j}
	- V_\alpha^{j,+}g_\alpha\bigr) - \bigl(V_{\alpha}^{j,-}g_{\alpha}
	- V_{\alpha-e_j}^{j,-}g_{\alpha-e_j}\bigr)\\
    &= \frac{1}{\Dx}\sum_{\alpha \in \Z^d}
    \sum_{j=1}^d \bigl(V_{\alpha}^{j,+}g_{\alpha}f_{\alpha - e_j}
	- V_\alpha^{j,+}g_\alpha f_\alpha\bigr) - \bigl(V_{\alpha}^{j,-}f_{\alpha}g_{\alpha}
	- V_{\alpha}^{j,-}f_{\alpha + e_j}g_{\alpha}\bigr)\\
     &= \frac{1}{\Dx} g_\alpha \sum_{\alpha \in \Z^d}
    \sum_{j=1}^d \bigl(V_{\alpha}^{j,+}f_{\alpha - e_j}
	- V_\alpha^{j,+}f_\alpha\bigr) - \bigl(V_{\alpha}^{j,-}f_{\alpha}
	- V_{\alpha}^{j,-}f_{\alpha + e_j}\bigr) \\
      &= \frac{1}{\Dx} g_\alpha \sum_{\alpha \in \Z^d}
    \sum_{j=1}^d \bigl(V_{\alpha}^{j,+}\bk{f_{\alpha - e_j}
	- f_\alpha} - V_{\alpha}^{j,-} \bk{f_{\alpha}
	- f_{\alpha + e_j}}\bigr)
   \overset{\eqref{eq:upwind-scheme}}{=}
   -\sum_{\alpha \in \Z^d} g_\alpha D_V f_\alpha.
\end{align*}
\end{proof}
\begin{proof}[Proof of Lemma
\ref{lem:discrete_calculus}
\ref{lem:discrete_calculus_chain-rules}]
By Taylor's formula, for some
$\xi^+_\alpha \in [f_\alpha \wedge f_{\alpha + e_j},
f_\alpha \vee f_{\alpha + e_j}]$,
\begin{align*}
\beta(f_{\alpha + e_j}) - \beta(f_{\alpha})
= \beta'(f_\alpha) \bk{f_{\alpha + e_j} - f_\alpha}
	+ \frac12 \beta''(\xi^+_\alpha) \abs{f_{\alpha + e_j} - f_\alpha}^2.
\end{align*}
Dividing the entire equation through
by $\Dx$ yields the chain rule for
$\nabla_+$. The chain rule for $\nabla_-$
can be shown in a similar way.

Subtracting the chain rule for the
forward difference from that for the
backward difference, we get
\begin{align*}
	\nabla_-\nabla_+ \beta(f_\alpha)
	& =\frac{\nabla_+\beta(f_\alpha)-\nabla_-\beta(f_\alpha)}{\Delta x}
	\\ & =  \beta'(f_\alpha)\frac{\nabla_+ f_\alpha-\nabla_- f_\alpha}{\Delta x}
	\\ & \qquad
	+\frac{\Delta x}{2\Delta x} \beta''(\xi_\alpha^+)\abs{\nabla_+ f_\alpha}^2
	+ \frac{\Delta x}{2\Delta x} \beta''(\xi_\alpha^-)\abs{\nabla_- f_\alpha}^2
	\\ & =  \beta'(f_\alpha)\nabla_-\nabla_+ f_\alpha
	\\ & \qquad
	+\frac{1}{2} \beta''(\xi_\alpha^+)\abs{\nabla_+ f_\alpha}^2
	+ \frac{1}{2} \beta''(\xi_\alpha^-)\abs{\nabla_- f_\alpha}^2,
\end{align*}
for some numbers $\xi_\alpha^\pm$ between
$u_{i\pm 1}$ and $f_\alpha$. In other words,
\begin{equation*}
	\beta'(f_\alpha)\nabla_-\nabla_+ f_\alpha
	-\nabla_-\nabla_+ \beta(f_\alpha)
	=-\frac{1}{2} \beta''(\xi_\alpha^+)\abs{\nabla_+ f_\alpha}^2
	-\frac{1}{2} \beta''(\xi_\alpha^-)\abs{\nabla_- f_\alpha}^2.
\end{equation*}
This is the commmutator rule
\eqref{eq:2nd-order-commutator-beta}

\end{proof}
\begin{proof}[Proof of  Lemma
\ref{lem:discrete_calculus}
\ref{lem:discrete_calculus_upwind-chain-rule}]

From the definition \eqref{eq:upwind-scheme}
of the upwind difference $D_V$,
\begin{align*}
\beta'(f_\alpha) D_V f_\alpha
& = \frac{\beta'(f_\alpha)}{\Dx}\sum_{j = 1}^d
	\bk{V^{j,+}_\alpha \bk{f_\alpha - f_{\alpha - e_j}}
	- V^{j,-}_\alpha \bk{f_{\alpha + e_j} - f_\alpha}}, \\
D_V \beta (f_\alpha)
& = \frac1\Dx \sum_{j = 1}^d
	\bk{V^{j,+}_\alpha \bk{\beta(f_\alpha) - \beta(f_{\alpha - e_j})}
	- V^{j,-}_\alpha \bk{\beta(f_{\alpha + e_j}) - \beta(f_\alpha)}}.
\end{align*}
Subtracting the second from the first and
using Lemma \ref{lem:discrete_calculus}
\ref{lem:discrete_calculus_chain-rules}
proves the assertion.
\end{proof}



\end{document}